\newtheorem{thm}{Theorem}[section]
\newtheorem{cor}[thm]{Corollary}
\newtheorem{pr}[thm]{Proposition}
\newtheorem{lm}[thm]{Lemma}
\theoremstyle{definition}
\newtheorem{defn}[thm]{Definition}
\newtheorem{ex}[thm]{Example}
\newtheorem{notn}[thm]{Notation}
\theoremstyle{remark}
\newtheorem{rem}[thm]{Remark}
\newcommand{\dis}{\displaystyle}
\newcommand{\D}{\mathcal{D}}
\newcommand{\R}{\mathbb{R}}
\newcommand{\F}{\mathbb{F}}
\newcommand{\ap} {\alpha}
\newcommand{\T}{\Theta}
\newcommand{\s} {\sigma}
\newcommand{\bt} {\beta}
\newcommand{\g} {\gamma}
\newcommand{\m}{\mathfrak{m}_}
\newcommand{\Om}{\Omega}
\newcommand{\om}{\omega}
\newcommand{\OL}{\Omega^1_{B|A}}
\newcommand{\ol}{\omega^1_{B|A}}
\newcommand{\ok} {\omega^1_{A}}
\newcommand{\OK} {\Omega^1_{A}}
\newcommand{\js} {\mathcal{J}_{\s}}
\newcommand{\is} {\mathcal{I}_{\s}}
\newcommand{\ns} {\mathcal{N}_{\s}}
\newcommand{\dn} {\overline{\Delta_N}}
\newcommand{\Z}{\mathbb{Z}}
\newcommand{\I}{\mathbb{I}}
\newcommand{\Lr}{\Leftrightarrow}
\newcommand{\Ra}{\Rightarrow}
\newcommand{\lb}{\left(}
\newcommand{\rb}{\right)}
\DeclareMathOperator{\Tr}{Tr}
\DeclareMathOperator{\ch}{char}
\DeclareMathOperator{\dl}{dlog} 
\DeclareMathOperator{\rsw}{rsw} 
\DeclareMathOperator{\Gal}{Gal}
\DeclareMathOperator{\Sw}{Sw}
\DeclareMathOperator{\Art}{Art}
\let\c@equation\c@thm
\numberwithin{equation}{section}
\newcommand{\textlatin }
\title{RAMIFICATION THEORY FOR ARTIN-SCHREIER EXTENSIONS OF
VALUATION RINGS }
\author{VAIDEHEE THATTE}
\begin{document}

\maketitle

\begin{small}\begin{center}\textbf{Abstract}\end{center} 

The goal of this paper is to generalize and refine the classical
ramification theory of complete discrete valuation rings to more
general valuation rings, in the case of Artin-Schreier
extensions. We define refined versions of invariants of
ramification in the classical ramification theory and compare
them. Furthermore, we can treat the defect case.
\end{small} 
\section{Introduction}

We present a generalization and refinement of the classical
ramification theory of complete discrete valuation rings to
valuation rings satisfying either (I) or (II) (as explained in
0.2), in the case of Artin-Schreier extensions. The classical
theory considers the case of complete discrete valued field
extension $L|K$ where the residue field $k$ of $K$ is perfect. 
In his paper \cite{K2}, Kato gives a natural definition of the
Swan conductor for complete discrete valuation rings with arbitrary (possibly imperfect) residue fields. He also defines the refined Swan
conductor $\rsw$ in this case using differential 1-forms and
powers of the maximal ideal $\m L$. The generalization we
present is a further refinement of this definition. Moreover, we
can deal with the extensions with defect, a case which was not
treated previously. 

\subsection{ Invariants of Ramification Theory} 

Let $K$ be a valued field of characteristic $p>0$ with henselian
valuation ring $A$,  valuation $v_K$ and residue field
$k$. Let $L=K(\alpha)$ be the Artin-Schreier extension defined
by $\ap^p - \ap =f$ for some $f \in K^{\times} $. Assume that
$L|K$ is non-trivial, that is, $[L:K]=p$. Let $B$ be the
integral closure of $A$ in $L$. Since $A$ is henselian, it
follows that $B$ is a valuation ring. Let $v_L$ be the valuation
on $L$ that extends $v_K$ and let $l$ denote the residue field
of $L$. Let $\Gamma :=v_K(K^{\times})$ denote the value group of
$K$. The Galois group $\Gal(L|K)=G$ is cyclic of order $p$,
generated by $ \s : \ap \mapsto \ap +1$.\\

Let $\mathfrak{A} = \{f \in K^{\times} \ | $ the solutions of
the equation $\ap^p-\ap=f$ generate $L$ over $K \} $.
Consider the ideals $\js$ and $H$, of $B$ and $A$ respectively,
defined as below:
\begin{equation} 
\js = \ \lb \left\{ \frac{\s(b)}{b}-1 \mid b \in L^{\times}
\right\} \rb \subset B
\end{equation} 

\begin{equation}
H= \ \lb \left\{ \frac{1}{f} \mid f \in \mathfrak{A} \right\}
\rb \subset A
\end{equation}

Our first result compares these two invariants via the norm map
$N_{L|K}=N$, by considering the ideal $\ns$ of $A$ generated by
the elements of $N(\js)$.
We also consider the ideal $\dis \is = \lb \{ \s(b) -b \ |\ b
\in B \} \rb$ of $B$. The ideals $\is$ and $\js$ play the roles
of $i(\s)$ and $j(\s)$ (the Lefschetz numbers in the classical
case, as explained in 2.1), respectively, in the generalization.
\subsection{Main Results}
We did not make any assumptions regarding the rank or defect in
these definitions. Now consider two special cases of the
scenario described above:
\begin{enumerate}[(I)]
\item \textbf{(Defectless)} In this case, we assume that $L|K$
is defectless. For Artin-Schreier extensions $L|K$ considered in
this paper, it means that either
$v_L(L^{\times})/v_L(K^{\times})$ has order $p$ and the residue
extension $l|k$ is trivial or the residue extension $l|k$ is of
degree $p$ and $L$ has the same value group $\Gamma$ as $K$.
\item \textbf{(Rank $1$)} The value group $\Gamma$ of $K$ is
isomorphic to a subgroup of $\R$ as an ordered group.
\end{enumerate}
We will prove the following results:

\begin{thm}\label{hn} If $L|K$ satisfies (I) or (II), we have
the following equality of ideals of $A$:
\begin{equation}
   H = \ns
\end{equation}
\end{thm}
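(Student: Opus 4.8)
The plan is to establish the two inclusions $H\subseteq\ns$ and $\ns\subseteq H$ separately; the first is a short direct computation and essentially all the work lies in the second. Throughout I will use that $N=N_{L|K}$ is multiplicative with $v_K\circ N=p\,v_L$ on $L^{\times}$ (the extension being Galois, with the valuation extending uniquely), together with the Artin--Schreier identity $\prod_{i\in\F_p}(\ap+c+i)=(\ap+c)^p-(\ap+c)=:f_c$ for $c\in K$. This identity gives $N(\ap+c)=f_c$ and shows $\mathfrak{A}=\{\,i\,f_c\mid i\in\F_p^{\times},\ c\in K\,\}$, so that $H=\big(\{\,1/f_c\mid c\in K\,\}\big)$. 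I also note that $v_L(\ap+c)\le 0$ for \emph{every} $c\in K$ --- otherwise $\ap+c\in\m L$, whereas $\s(\ap+c)-(\ap+c)=1$ is a unit and $\s$ preserves $\m L$ --- so that $\delta:=\sup_{c\in K}v_L(\ap+c)$ is finite and $\le 0$. For the first inclusion: given $c\in K$ put $b:=\ap+c\in L^{\times}$; then $\s(b)/b-1=1/b$, which lies in $B$ since $v_L(\ap+c)\le 0$, hence $1/(\ap+c)\in\js$ and $N\big(1/(\ap+c)\big)=1/f_c\in\ns$; as these generate $H$, we conclude $H\subseteq\ns$. This step uses neither (I) nor (II).

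For $\ns\subseteq H$ it suffices to show $N(\s(b)/b-1)\in H$ for all $b\in L^{\times}$ with $\s(b)/b-1\in B$, and only $b\notin K$ is nontrivial. Since $v_K\big(N(\s(b)/b-1)\big)=p\big(v_L((\s-1)b)-v_L(b)\big)$, the generators of $H$ have $v_K$-value $-p\,v_L(\ap+c)$, and $A$ is a valuation ring, the whole theorem reduces (up to routine bookkeeping about whether the relevant suprema are attained --- in the defect case one checks $\rho$ below is nowhere attained on $L\setminus K$) to the inequality
\begin{equation*}
\rho(b)\ \le\ v_L\big((\s-1)b\big)+\delta,\qquad b\in L^{\times}\setminus K,\tag{$\star$}
\end{equation*}
where $\rho(x):=\sup_{c\in K}v_L(x+c)$.

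To prove $(\star)$ I would argue as follows. If $(\s-1)b\in K^{\times}$, then $\ap':=b/((\s-1)b)$ satisfies $\s(\ap')=\ap'+1$, so $\ap'=\ap+c$ for some $c\in K$, and since $\rho(xy)=v_L(x)+\rho(y)$ for $x\in K^{\times}$ we get $\rho(b)=v_L((\s-1)b)+\rho(\ap+c)=v_L((\s-1)b)+\delta$. For general $b$, expand $b=\sum_{j=0}^{p-1}d_j\binom{\ap}{j}$ with $d_j\in K$; Pascal's identity shows that $\s-1$ acts on this basis by the shift $\binom{\ap}{j}\mapsto\binom{\ap}{j-1}$, and since both $\rho$ and $\s-1$ are insensitive to the $\binom{\ap}{0}$-coefficient we may take $d_0=0$. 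Now suppose $L|K$ satisfies (I). Under the second alternative in (I) (nontrivial residue extension), the optimal choice of $\ap$ is a unit with $\overline{\ap}\notin k$, so $f=\ap^p-\ap$ is a unit, $H=A$, and there is nothing to prove. Under the first alternative, choose $\ap$ optimal so that $v_L(\ap)=\delta$; one checks $v_L(\ap)\notin\Gamma$ and that its class generates $v_L(L^{\times})/\Gamma\cong\Z/p$, whence the values $v_K(d_j)+j\,v_L(\ap)$ ($0\le j\le p-1$) are pairwise distinct, $v_L\binom{\ap}{j}=j\,v_L(\ap)$, no cancellation occurs, and
\begin{equation*}
\rho(b)=v_L(b)=\min_{j}\big(v_K(d_j)+j\,v_L(\ap)\big),\qquad v_L\big((\s-1)b\big)=\min_{j}\big(v_K(d_j)+(j-1)\,v_L(\ap)\big)
\end{equation*}
(minima over $1\le j\le p-1$ with $d_j\ne 0$); then $(\star)$, in fact with equality, follows by adding $v_L(\ap)=\delta$ to the second minimum. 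If $L|K$ satisfies (II) and is defectless the same argument works; the remaining case is (II) with defect, where $\delta$ is not attained and $v_L(L^{\times})=\Gamma$, so the values $v_K(d_j)+j\,v_L(\ap)$ may collide and the preceding bookkeeping breaks. There I would instead fix a pseudo-convergent sequence $(c_n)$ in $K$ with $v_L(\ap+c_n)\uparrow\delta$ and, using the archimedean structure afforded by (II), estimate $v_L(b)$, $v_L((\s-1)b)$ and $\rho(b)$ along this sequence in order to recover $(\star)$.

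The main obstacle is precisely this last case: the clean ``pairwise distinct valuations'' argument of the defectless case rests on $v_L(\ap)\notin\Gamma$, which is exactly what fails once the extension has defect, so one is forced to work with pseudo-convergent sequences and the rank $1$ topology instead. This is the genuinely new point, the defect case being absent from the classical theory and from Kato's setting.
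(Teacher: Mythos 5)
Your reduction of $\ns\subseteq H$ to the inequality $(\star)$ is sound, and your argument in the wildly ramified defectless subcase (pairwise distinct values $v_K(d_j)+jv_L(\ap)$ because $v_L(\ap)$ generates $v_L(L^{\times})/\Gamma$) is correct, though more roundabout than the paper's, which simply observes that for best $f$ one has $H=\lb 1/f\rb$, $\js=\lb 1/\ap\rb$, hence $\ns=\lb N(1/\ap)\rb=\lb 1/f\rb=H$. However, there are two genuine gaps. The smaller one: you dismiss the ferocious subcase of (I) with the claim that the optimal $\ap$ is a unit and $H=A$. That is false: best $f$ need not be a unit when $l|k$ is purely inseparable of degree $p$. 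For example $K=k((t))$ with $k$ imperfect and $f=at^{-p}$, $\bar a\notin k^p$, gives $e_{L|K}=1$, $l|k$ inseparable of degree $p$, and $H=(t^p)\neq A$ (this is exactly case (iii) of Lemma 2.8). The argument is repairable --- replace ``pairwise distinct valuations'' by the $k$-linear independence of the residues of $(\ap\g)^j$, as in Lemma 1.8(b) --- but as written this case is not covered.

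The serious gap is the defect case, which is the actual content of the theorem (the paper itself notes the defectless case ``follows directly from results in section 3''). You do not prove $(\star)$ there: you only announce that you would estimate $v_L(b)$, $v_L((\s-1)b)$ and $\rho(b)$ along a pseudo-convergent sequence, and you correctly identify this as ``the main obstacle.'' No such estimate is carried out, and it is not routine, since exactly the no-cancellation mechanism underlying your defectless computation is what fails. The paper resolves this case by a different device: for arbitrary $\bt\in B\setminus A$ with minimal polynomial $g$, the element $x=(\s-1)^{p-2}\lb \bt^{p-1}/g'(\bt)\rb$ satisfies $(\s-1)(x)=\Tr_{L|K}(\bt^{p-1}/g'(\bt))=1$ by Lemma 1.10, hence is itself an Artin--Schreier generator, so $1/N(x)\in H$; one then compares $s'=v((\s-1)(x)/x)$ with $s=v((\s-1)(\bt)/\bt)$ by tracking the increments $c_i$ of $v$ along $(\s-1)^i(\bt^{p-1}/g'(\bt))$, whose sum is $(p-1)s$, obtaining $(p-2)(s-c)\geq s'-s$ and concluding by letting $s$ approach the infimum $c$. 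Without either that construction or a completed pseudo-convergence argument, the theorem remains unproved in the only case where it is nontrivial.
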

\begin{thm}\label{comm dia} If $L|K$ satisfies (I) or (II), we
consider the $A$-module $\ok$ of logarithmic differential
$1$-forms and the $B$-module $\ol$ of logarithmic differential
$1$-forms over $A$ (as defined in section 1.1). Then
\begin{enumerate}[(i)]
\item There exists a unique homomorphism of $A$-modules 
$\dis \rsw : H/H^2 \to \ok/(\is \cap A)\ok$ such that $\dis
\frac{1}{f} \mapsto \dl f \ ; \ $ for all $ f \in
\mathfrak{A}$.
\item There is a $B$- module isomorphism $\dis \varphi_\s:
\ol/\js\ol \overset{\cong}{\to }\js/\js^2$ such that $ \dis \ \dl x \mapsto \frac{\s(x)}{x}-1,$ for all $ x \in L^{\times}.$
\item Furthermore, these maps induce the following commutative
diagram:

\begin{center}$
\begin{tikzcd}[column sep=large]
\ol/\js \ol \arrow{r}{\varphi_\s}[swap]{\cong}
\arrow{d}[swap]{\dn}
&\js/\js^2  \arrow[hook]{d}{\overline{N}}\\
\ok/(\is \cap A)\ok   &H/H^2 \arrow{l}{\rsw}
\end{tikzcd}$
\end{center}

\noindent The maps $\dn, \overline{N}$ are induced by the norm
map $N$, as described in section 6.
\end{enumerate}
\end{thm}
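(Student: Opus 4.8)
The plan is to establish part~(ii) first, then to obtain part~(i) by the same mechanism applied to the base, and finally to read off part~(iii) on generators. For~(ii) I would build $\varphi_\s$ directly as a $B$-module map. Define $\widetilde{\varphi}\colon\ol\to\js/\js^2$ on the standard generators by $\dl x\mapsto\overline{\tfrac{\s(x)}{x}-1}$ for $x\in L^{\times}$ and $dx\mapsto\overline{\s(x)-x}$ for $x\in B$; these are consistent with the relation $dx=x\,\dl x$ since $x\big(\tfrac{\s(x)}{x}-1\big)=\s(x)-x$, and $\s(x)-x$ lies in $\js$ for $x\in B$. One then checks that $\widetilde{\varphi}$ kills the defining relations of $\ol$ --- additivity and the Leibniz rule for $d$, additivity of $\dl$, and $da=\dl a=0$ for $a$ in the base. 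Each of these reduces to two elementary facts: that $\s$ is the identity on $A$, and that the cross terms $\big(\tfrac{\s(x)}{x}-1\big)\big(\tfrac{\s(y)}{y}-1\big)$ and $(\s(x)-x)(\s(y)-y)$ lie in $\js^2$ because each factor lies in $\js$. Since $\tfrac{\s(x)}{x}-1$ and $\s(x)-x$ lie in $\js$, one also gets $\widetilde{\varphi}(\js\,\ol)\subseteq\js^2$, so $\widetilde{\varphi}$ descends to $\varphi_\s\colon\ol/\js\,\ol\to\js/\js^2$ with the stated formula, and $\varphi_\s$ is surjective because $\js$ is generated by the $\tfrac{\s(b)}{b}-1=\varphi_\s(\dl b)$.

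The real content, and the only place hypothesis~(I) or~(II) enters, is the injectivity of $\varphi_\s$: producing an inverse $\js/\js^2\to\ol/\js\,\ol$ with $\overline{\tfrac{\s(b)}{b}-1}\mapsto\overline{\dl b}$, equivalently showing that $\sum c_i\big(\tfrac{\s(b_i)}{b_i}-1\big)\in\js^2$ (with $c_i\in B$) forces $\sum c_i\,\dl b_i\in\js\,\ol$. Here I would invoke the explicit computations of $\js$, $\is$ and $\ol$ carried out in the earlier sections in the two cases: in each, both $\js/\js^2$ and $\ol/\js\,\ol$ turn out to be cyclic $B/\js$-modules, generated respectively by $\overline{\tfrac{\s(b_0)}{b_0}-1}$ and $\overline{\dl b_0}$ for a well-chosen $b_0\in L^{\times}$ --- for instance $b_0=\ap$ after normalizing the defining equation, where $\tfrac{\s(\ap)}{\ap}-1=\ap^{-1}$ --- with $\js$ itself principal, generated by a non-zero-divisor of the domain $B$. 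Matching the two generators then makes $\varphi_\s$ bijective, and the defect case is included precisely because those computations assumed no finiteness. This is the main obstacle.

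For~(i), uniqueness is automatic: $H/H^2$ is generated as an $A/H$-module by the classes of $1/f$, $f\in\mathfrak{A}$. For existence I would argue just as for $\varphi_\s$, now downstairs. Using $\mathfrak{A}=\F_p^{\times}f_0+\wp(K)$ with $\wp(h)=h^p-h$, together with the descriptions of $H$ and $\is\cap A$ from the earlier sections, one checks that $1/f\mapsto\dl f$ respects every $A$-linear relation among the generators $1/f$ modulo $(\is\cap A)\ok$. Alternatively, once the maps $\dn$ and $\overline{N}$ of section~6 are available, $\rsw$ is pinned down by the diagram: $\dn\circ\varphi_\s^{-1}$ is a map out of $\js/\js^2$, its image under $\overline{N}$ generates $H/H^2$ by Theorem~\ref{hn}, and one verifies the resulting map on $H/H^2$ sends $1/f$ to $\dl f$.

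Finally, for~(iii), both composites around the square are additive, so by~(ii) it suffices to compare them on the classes $\overline{\dl x}$, $x\in L^{\times}$. The vertical route sends $\overline{\dl x}$ to $\dn(\overline{\dl x})=\overline{\dl N(x)}$, by the construction of $\dn$ in section~6 (compatibility of $\dl$ with the norm); the other route sends it to $\rsw\big(\overline{N\big(\tfrac{\s(x)}{x}-1\big)}\big)$, so commutativity comes down to $\overline{\dl N(x)}=\rsw\big(\overline{N\big(\tfrac{\s(x)}{x}-1\big)}\big)$. This is most transparent for $x=\ap$, which (after normalization) generates $\ol/\js\,\ol$ by the proof of~(ii): $\prod_{i\in\F_p}(\ap+i)=\ap^{p}-\ap=f$ gives $N(\ap)=f$ and $N(\ap^{-1})=1/f$ with $f\in\mathfrak{A}$, so both sides equal $\overline{\dl f}$ by the defining property of $\rsw$; and $df=-d\ap$ in characteristic $p$, consistent with the bookkeeping. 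Commutativity on a generating set then yields the square, the injectivity of $\overline{N}$ being the one further ingredient, supplied by section~6.
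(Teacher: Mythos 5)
Your construction of $\varphi_\s$ as a well-defined surjection, and the reduction of commutativity to a check on generators, match the paper's use of \Cref{phi} and \Cref{norm}. But the step you yourself flag as ``the real content'' --- injectivity of $\varphi_\s$ --- is where the proposal breaks down, and it breaks down exactly in the case the theorem is chiefly about. You argue that in each of the cases (I), (II) the ideal $\js$ is principal, generated by $\frac{\s(b_0)}{b_0}-1$ for a single well-chosen $b_0$ (e.g.\ $b_0=\ap$ for a normalized, i.e.\ ``best'', defining equation), that $\ol/\js\ol$ is cyclic on $\dl b_0$, and that matching the two generators gives bijectivity. This is true in the defectless case (I), and it is essentially the paper's Case~I argument: $b\,\dl\ap\in\ker\varphi_\s\Lr b\cdot\frac{1}{\ap}\in\js^2\Lr b\in\js$. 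It is false in case (II) with defect: by \Cref{j} and \Cref{best defect}, $\js$ is principal \emph{if and only if} $L|K$ is defectless, and likewise best $f$ exists and $H$ is principal if and only if $L|K$ is defectless. So in the defect case there is no $b_0$, no best $f$, and no single-generator matching; your remark that ``the defect case is included precisely because those computations assumed no finiteness'' inverts the actual situation. The paper's substitute is the entire apparatus of Sections 4--6: \Cref{hn} is used to prove \Cref{fil}, writing $B$ as a filtered union $\bigcup_{\ap}A[\ap']$; then $\OL=\varinjlim_\ap\Om^1_{A[\ap']|A}\cong\T/\T'$ (\Cref{diff}), and the square is verified at each $\ap$-level before passing to the limit. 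None of that is recoverable from your outline.

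The same omission propagates to parts (i) and (iii). For (i), well-definedness of $\rsw$ is not a formality: one must show that replacing $f$ by $f+a^p-a$ changes $\dl f$ only by an element of $(\is\cap A)\ok$, which the paper does via the chain $H\subset\I\subset\is\cap A$ in Case~I (where the $p=2$ and $p>2$ arguments differ) and via the ideals $\I_\ap$ with $(\frac{1}{f_\ap})A\subset\I_\ap\subset(\frac{1}{\ap})A[\ap']\cap A\subset\is\cap A$ at each level in Case~II; ``one checks that $1/f\mapsto\dl f$ respects every relation'' does not engage with this. For (iii), verifying commutativity on the single class $\overline{\dl\ap}$ again presupposes a global generator that does not exist in the defect case. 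Finally, injectivity of $\overline{N}$, which you defer to ``section 6,'' is exactly \Cref{hn} ($H=\ns$) together with the explicit valuation-theoretic descriptions of $H$ and $\js$ --- another ingredient whose proof in the defect case is nontrivial and is not supplied here.
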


The map $\rsw$ in (i) is a refined generalization of the refined
Swan conductor of Kato for complete discrete valuation rings
\cite{K2}.
\begin{rem} It is worth noting that if $p=2$, both the results
are true without any assumptions regarding defect or rank, as
seen in later sections.
\end{rem}
\begin{rem} If $L|K$ is unramified ($e_{L|K}=1, l|k$ separable of degree $p$), then we have $i(\s)=j(\s)=0$,  $\is=\js=B$ and $H=A$. Consequently, our main results are trivially true. From now on, we assume that $L|K$ is either wild ($e_{L|K}=p, l|k$ trivial ), ferocious ($l|k$ purely inseparable of degree $p$) or with defect.
\end{rem}
\subsection{Outline of the Contents} 

\begin{itemize}
\item \textbf{Review, Small Results, Examples:} In sections 1, 2
we present some preliminaries and the discrete valuation ring
case. Section 3 contains some elementary results that help us
understand the cases I and II. 
\item \textbf{Proofs of Main Results:} We prove  \Cref{hn} in section 4. In section 5, we analyze 
the defect case. We use \Cref{hn} to prove \Cref{fil}, which  allows us to express the ring $B$
as a filtered union of rings $A[x]|A$, where elements $x \in
L^{\times}$ are chosen very carefully.  We prove  
\Cref{comm dia} for both cases I and II in section 6.

\item \textbf{The Different Ideal and Further Results:} Section
7 presents the description of the different ideal $\D_{B|A}$
when $L|K$ satisfies (I) or (II). This ideal equals the annihilator of the relative K{\"a}hler differential module $\Omega^1_{L|K}$ in the classical case. However, this is not true in the case of arbitrary valuations.

\item \textbf{Appendix:} In section 8, we present a non-trivial example of a defect extension. It shows us the difficulties that rise from the defect. We also verify the main results for this example.
\end{itemize}

\section{Preliminaries: Differential Forms, Defect, Cyclic
Extensions, Trace}

\subsection{Definitions: Differential Forms and Different Ideal
$\D_{B|A}$ }
\begin{defn} \textbf{Differential $1$-Forms}

\begin{enumerate}[(i)]
\item Let $R$ be a commutative ring. The $R$-module $\Om_R^1$ of
\textit{differential $1$-forms over $R$} is defined as follows:
$\Om_R^1$ is generated by
\begin{itemize}
\item The set $\{ db \mid b \in R \}$ of generators.
\item The relations being the usual rules of differentiation:
For all $b, c \in R$,
\begin{enumerate}
\item (Additivity) $d(b+c)=db+dc$
\item (Leibniz rule) $d(bc)=cdb+bdc$
\end{enumerate}
\end{itemize}
\item For a commutative ring $A$ and a commutative $A$-algebra
$B$, the $B$-module $\OL$ of \textit{relative differential
$1$-forms over $A$} is defined to be the cokernel of the map $B
\otimes_A \Om^1_A \to \Om^1_B.$
\end{enumerate}
\end{defn}

\begin{defn} \textbf{Logarithmic Differential $1$-Forms}
\begin{enumerate}[(i)]
\item For a valuation ring $A$ with the field of fractions $K$,
we define the \textit{$A$-module $\ok$ of logarithmic
differential $1$-forms} as follows: $\ok$ is generated by
\begin{itemize}
\item The set $\{ db \mid b \in A \} \cup \{ \dl x \mid x
\in K^{\times}\}$ of generators.
\item The relations being the usual rules of differentiation and
an additional rule: For all $b,c \in A$ and for all $x,y \in
K^{\times},$
\begin{enumerate}
\item (Additivity) $d(b+c)=db+dc$
\item (Leibniz rule) $d(bc)=cdb+bdc$
\item (Log 1) $\dl(xy)=\dl x+\dl y$
\item (Log 2) $b \dl b = db$ for all $0 \neq b \in A$
\end{enumerate}
\end{itemize}
\item Let $L|K$ be an extension of henselian valued fields, $B$
the integral closure of $A$ in $L$ and hence, a valuation ring.
We define the \textit{$B$-module $\ol$ of logarithmic relative
differential $1$-forms over $A$} to be the cokernel of the map
$B \otimes_A \om^1_A \to \om^1_B.$

\end{enumerate}
\end{defn}
\begin{defn} \textbf{The Different Ideal $\D_{B|A}$}
 
Let $A$ be a valuation ring with the field
of fractions $K$. Let $L|K$ be a separable extension of fields,
$B$ the integral closure of $A$ in $L$.
As in the classical case, we define the \textit{inverse
different} $\D_{B|A}^{-1}$ by $\dis \D_{B|A}^{-1}:= \{x \in L \
| \ \Tr_{L|K}(xB) \subset A \}$.

This is a fractional ideal  of $L$.
The \textit{different $\D_{B|A}$ of $B$ with respect to $A$} is
defined as the inverse ideal of $\D_{B|A}^{-1}$.\\
\end{defn}

\subsection{Valuation Rings and Differential $1$-Forms}
\begin{defn} Let $A$ be a valuation ring with fraction field $K$
and  valuation $v$. For any $x \in K^{\times}$, we
define an $A$-module homomorphism $\dis dx: M_x \to \ok$ by $h
\mapsto hx \ \dl x$ where $M_x:=\lb \frac{1}{x} \rb$.\\
For $x=0$, we define $d0$ to be the zero map$: M_0 \to \ok$ by
$h \mapsto 0$ where $M_0:=K$.
\end{defn}
\begin{lm} Let $A, K, v$ be as above and $x, y \in K$. Then we
have the following properties.
\begin{enumerate}[(i)]
\item (Additivity) The $A$-module homomorphisms $dx, dy, d(x+y):
M \to \ok$ satisfy $d(x+y)=dx+dy$. Here, $M= M_x \cap M_y \cap
M_{x+y} $.
\item (Leibniz rule) The $A$-module homomorphisms $dx, dy,
d(xy): M \to \ok$ satisfy $d(xy)=ydx+xdy$. Here, $M=M_x \cap M_y
\cap M_{xy} $.
\end{enumerate}
\end{lm}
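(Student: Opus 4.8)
The statement to prove is the Lemma asserting additivity and the Leibniz rule for the partially-defined maps $dx, dy, d(x+y), d(xy) \colon M \to \ok$.

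The plan is to reduce everything to the defining relations of the module $\ok$ (additivity, Leibniz, Log 1, Log 2) together with the explicit formula $dx \colon h \mapsto hx\,\dl x$ for $x \neq 0$, and $d0 \colon h \mapsto 0$. The essential point is that on the common domain $M$, all the expressions $hx\,\dl x$ make sense, and the module relations already hold among the symbols $db$ ($b \in A$) and $\dl x$ ($x \in K^\times$); the only subtlety is that $x$, $y$, $x+y$, $xy$ need not individually lie in $A$, so one must manipulate inside $\ok$ using elements of $A$ as coefficients.

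First I would dispose of the degenerate cases where one of the relevant elements is zero: if $x = 0$ then $d(x+y) = dy$ and $d(xy) = d0 = 0$ on the appropriate $M$, and both identities are immediate; similarly if $y = 0$ or $x+y = 0$ or $xy = 0$. So assume all relevant elements are in $K^\times$. For additivity: fix $h \in M = M_x \cap M_y \cap M_{x+y}$, so $hx, hy, h(x+y) \in A$ (indeed $h/1$ generates a submodule of each $M_z = (1/z)$, meaning $hz \in A$). I want to show $h(x+y)\dl(x+y) = hx\,\dl x + hy\,\dl y$ in $\ok$. Using Log 2 backwards, write $h(x+y)\dl(x+y) = \frac{1}{h(x+y)} \cdot (h(x+y))^2 \dl(x+y)$—better to avoid division. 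Instead: note $h(x+y)\dl(x+y) = (hx + hy)\dl(x+y)$. Now the idea is to pass through the differential $d$ of the element $h(x+y) \in A$: by Log 2, $d(h(x+y)) = h(x+y)\dl(h(x+y)) = h(x+y)\big(\dl h + \dl(x+y)\big)$ (using Log 1, valid since $h, x+y \in K^\times$ when $h \neq 0$; the case $h = 0$ is trivial). On the other hand, by the Leibniz rule in $\ok$, $d(h(x+y)) = d(hx + hy) = d(hx) + d(hy) = hx\,\dl(hx) + hy\,\dl(hy)$ (Log 2 again, since $hx, hy \in A$), and expanding $\dl(hx) = \dl h + \dl x$, $\dl(hy) = \dl h + \dl y$ gives $d(h(x+y)) = (hx+hy)\dl h + hx\,\dl x + hy\,\dl y$. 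Equating the two expressions for $d(h(x+y))$ and cancelling the common term $h(x+y)\dl h = (hx+hy)\dl h$ yields exactly $h(x+y)\dl(x+y) = hx\,\dl x + hy\,\dl y$, which is $d(x+y)(h) = dx(h) + dy(h)$. The Leibniz rule is handled the same way: for $h \in M_x \cap M_y \cap M_{xy}$, one needs $h^2 xy \in A$ to make sense of things—actually one should take $h \in M_{xy}$ so that $hxy \in A$; then $d(xy)(h) = hxy\,\dl(xy) = hxy(\dl x + \dl y)$ by Log 1, and this visibly equals $y \cdot (hx\,\dl x) + x\cdot(hy\,\dl y)$... but wait, $x, y \notin A$ in general, so $y\,dx(h) = y\cdot hx\,\dl x$ requires interpreting $y$ as a scalar, which is not allowed in the $A$-module $\ok$. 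I need to be careful: $ydx$ means the composite $M \xrightarrow{\cdot y} M_x$ followed by $dx$, or rather the claim is about maps $M \to \ok$, where $ydx \colon h \mapsto dx(hy) = hyx\,\dl x$ provided $hy \in M_x$, i.e. $hyx \in A$—which holds since $h \in M_{xy}$. So $ydx(h) = hxy\,\dl x$ and $xdy(h) = hxy\,\dl y$, and their sum is $hxy(\dl x + \dl y) = hxy\,\dl(xy) = d(xy)(h)$ by Log 1. So the Leibniz case is in fact the shorter one, needing only Log 1.

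The main obstacle is bookkeeping rather than depth: one must consistently track which products lie in $A$ so that every symbol $d(\cdot)$ or $\dl(\cdot)$ used is legitimate (arguments of $d$ in $A$, arguments of $\dl$ in $K^\times$), handle the zero cases separately, and—for additivity—correctly route through the Leibniz relation for the element $h(x+y) \in A$ and cancel the auxiliary $\dl h$ terms. I would also remark that the common domain $M$ in each part is chosen precisely so that the relevant products of $h$ with $x, y, x+y$ (resp. $xy$) land in $A$; this is what makes the partially-defined $dx$ into a genuine $A$-module map on $M$, and the verification that $dx$ itself is well-defined as a map $M_x \to \ok$ (independent of representative, $A$-linear) is immediate from $\dl$ being a fixed element of $\ok$ once $x$ is fixed.
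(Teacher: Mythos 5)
Your proof is correct, and part (ii) coincides with the paper's one-line computation (including the correct reading of $y\,dx$ as $h \mapsto dx(hy)$, which the paper leaves implicit). For part (i), however, you take a genuinely different route. The paper assumes WLOG $v(x)\leq v(y)$, writes $y=ax$ with $a\in A$, and reduces additivity to the single identity $(a+1)\,\dl(a+1)=a\,\dl a$, which follows from $d(a+1)=da$ in $\ok$. You instead clear denominators using the test element $h$ itself: since $hx$, $hy$, $h(x+y)$ all lie in $A$, you expand $d(h(x+y))$ once via Log 2 and Log 1 as $h(x+y)(\dl h + \dl(x+y))$ and once via additivity and Log 2 as $(hx+hy)\dl h + hx\,\dl x + hy\,\dl y$, then cancel the $\dl h$ terms. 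Your argument never invokes the valuation (no comparison of $v(x)$ and $v(y)$ is needed), at the cost of introducing and cancelling the auxiliary $\dl h$; the paper's argument avoids $\dl h$ entirely but needs the valuation to produce the ratio $a=y/x\in A$. Both are complete modulo the degenerate cases ($x$, $y$, or $x+y$ equal to $0$, and $h=0$), which you flag explicitly and the paper handles implicitly via the convention $d0=0$; note that the case $x+y=0$, $x\neq 0$ does require the small observation $\dl(-1)=0$ to see that $dx+d(-x)=0$, so ``immediate'' is a slight overstatement there, but not a gap.
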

\begin{proof} \begin{enumerate}[(i)] \item We may assume that
$v(x) \leq v(y)$ and write $y=ax;\ a \in A$. Note that in $\ok,
da=a \dl a$ and $d1=\dl 1 =0$. Hence, $(a+1) \dl
(a+1)=d(a+1)=da=a \dl a$.\\
For all $ h \in M$,
\begin{align*}
d(x+y)(h)&= h(x+y) \dl (x+y)\\&=hx(a+1) \dl [x(a+1)]\\
&=hx(a+1) [\dl x + \dl (a+1)]\\&= hx(a+1) \ \dl x +
hx(a+1) \ \dl (a+1)\\&=
hx \ \dl x + hxa \ \dl x + hxa \ \dl a \\&= hx \ \dl
x + hxa \ \dl xa\\&= dx(h) + dy(h)
\end{align*}
\item For all $ h \in M$,
\begin{align*}
d(xy)(h)&= hxy\ \dl (xy)\\&= hxy \ \dl x +hxy \ \dl y
\\&= y dx(h)+xdy(h)
\end{align*}
\end{enumerate}
\end{proof}

\begin{lm}\label{phi} Let $L|K$ be as in 0.1. Then we have 
\begin{enumerate}
\item A surjective $B$-module homomorphism $\Phi_\s : \OL/\is\OL
\to \is/\is^2$ such that $\Phi_\s(db)= \s(b)-b$ for all $b \in
B$.
\item A surjective $B$-module homomorphism $\varphi_\s :
\ol/\js\ol \to \js/\js^2$ such that $\varphi_\s( \dl x)=
\frac{\s(x)}{x} -1$ for all $x \in L^{\times}.$
\end{enumerate}
\end{lm}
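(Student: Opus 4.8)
The plan is to construct each homomorphism in two stages: first build a map out of the module of (logarithmic) differential forms to the quotient $\is/\is^2$ (resp. $\js/\js^2$) by checking that the assignment on generators respects the defining relations, and then verify that the map kills the submodule $\is\OL$ (resp. $\js\ol$) so that it descends to the stated quotient. For part (1), I would first observe that $\s$ is an $A$-algebra automorphism of $B$, so the map $\delta_\s : B \to B$, $b \mapsto \s(b)-b$, is an $A$-derivation modulo $\is$: indeed $\delta_\s(bc) = \s(b)\s(c) - bc = \s(b)\delta_\s(c) + c\,\delta_\s(b) + \delta_\s(b)\delta_\s(c)$, and the last term lies in $\is^2$, so $\delta_\s$ induces an $A$-derivation $B \to \is/\is^2$. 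By the universal property of $\Om^1_B$ this gives a $B$-linear map $\Om^1_B \to \is/\is^2$ with $db \mapsto \s(b)-b$; since it kills $\Om^1_A$ (as $\s$ fixes $A$) it factors through $\OL = \operatorname{coker}(B\otimes_A\Om^1_A \to \Om^1_B)$. Finally, for $b \in B$ the element $db$ maps to $\s(b)-b \in \is$, so every element of $\is\OL$ maps into $\is\cdot\is/\is^2 = 0$; hence we get $\Phi_\s : \OL/\is\OL \to \is/\is^2$. Surjectivity is clear because $\is$ is generated by the elements $\s(b)-b$, which are exactly the images of the $db$.

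For part (2), the same strategy applies but one must check the two extra logarithmic relations. The candidate assignment is $\dl x \mapsto \frac{\s(x)}{x}-1$ for $x\in L^\times$ and $db \mapsto \s(b)-b$ for $b\in B$. The multiplicativity relation $\dl(xy)=\dl x+\dl y$ corresponds to $\frac{\s(xy)}{xy}-1 = \left(\frac{\s(x)}{x}-1\right)+\left(\frac{\s(y)}{y}-1\right) + \left(\frac{\s(x)}{x}-1\right)\left(\frac{\s(y)}{y}-1\right)$, and the cross term lies in $\js^2$, so the relation holds in $\js/\js^2$. The relation $b\,\dl b = db$ for $0\neq b\in B$ reads $b\left(\frac{\s(b)}{b}-1\right) = \s(b)-b$, which is an exact identity in $B$, hence a fortiori in $\js/\js^2$. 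Additivity and Leibniz for the $db$ generators go as in part (1). This produces a $B$-linear map $\om^1_B \to \js/\js^2$ killing $\om^1_A$, hence a map $\ol \to \js/\js^2$; and since $\dl x \mapsto \frac{\s(x)}{x}-1 \in \js$ and $db \mapsto \s(b)-b \in \js$, the submodule $\js\ol$ is sent into $\js\cdot\js/\js^2 = 0$, giving $\varphi_\s : \ol/\js\ol \to \js/\js^2$. Surjectivity holds because $\js$ is by definition generated by the elements $\frac{\s(b)}{b}-1$ for $b\in L^\times$, which are the images of the $\dl b$.

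The only genuinely delicate point is making sure the maps are well defined on the logarithmic module, i.e. that nothing beyond the listed relations (Additivity, Leibniz, Log 1, Log 2) is needed — equivalently, that $\om^1_B$ has the presentation given in Definition 1.2 and the cokernel description of $\ol$ behaves as expected over the valuation ring $B$. Here one uses that $\s$ fixes $K$ pointwise (it is a $K$-automorphism of $L$), so all differentials of elements of $A$, and all $\dl$ of elements of $K^\times$, are sent to $\s(a)-a = 0$ and $\frac{\s(x)}{x}-1 = 0$ respectively; this is exactly what guarantees the factorization through the relative module. The surjectivity statements require no extra work beyond unwinding the definitions of $\is$ and $\js$. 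I do not expect the ``mod square'' bookkeeping to cause trouble, since in each case the obstruction term is manifestly a product of two elements of the relevant ideal.
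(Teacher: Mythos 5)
Your proposal is correct and follows essentially the same route as the paper: verify that the assignments $db\mapsto\s(b)-b$ and $\dl x\mapsto\frac{\s(x)}{x}-1$ respect the defining relations modulo $\is^2$ (resp.\ $\js^2$), note that $\s$ fixes $K$ so the maps kill the image of the base, and read off surjectivity from the generating sets of $\is$ and $\js$. You are merely more explicit than the paper about checking the Log~2 relation and the descent through $\is\OL$ (resp.\ $\js\ol$), which the paper leaves implicit.
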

\begin{proof} Since $\s$ fixes $K$, $\s(a)-a=0$ for all $a \in
A$ and $\frac{\s(x)}{x}-1=0$ for all $x \in K^{\times}$.
Let $b,c \in B$.
The first part follows from $\dis  \s(b+c)-(b+c)=\s(b)-b+\s(c)-c$ and
 \begin{align*}
  \s(bc)-bc &=(\s(b)-b)(\s(c)-c)+c(\s(b)-b)+b(\s(c)-c)\\
&\equiv c(\s(b)-b)+b(\s(c)-c)
\mod \is^2. 
\end{align*}

\noindent Let $x,y \in L^{\times}$. The second assertion follows from
\begin{align*}
\frac{\s(xy)}{xy}-1&=\lb \frac{\s(x)}{x}-1\rb \lb
\frac{\s(y)}{y}-1 \rb+ \frac{\s(x)}{x}-1+\frac{\s(y)}{y}-1 \\ &
\equiv \frac{\s(x)}{x}-1+\frac{\s(y)}{y}-1 \mod \js^2.
\end{align*}

\end{proof}

\subsection{Defect: Introduction}
 
\begin{defn}
Let $E|F$ be a finite algebraic extension of fields of degree
$[E:F]=n$ and $v$ a non-trivial valuation on $F$. Denote the
extensions of $v$ from $F$ to $E$ by $v_1,...v_g$. Let $F_v$ be
the residue field and $v(F^{\times})$ the value group for the
valued field $(F,v)$. Similarly, define $E_{v_i
}$ and $v_i(E^{\times})$.  For each $1 \leq i \leq g$, define:
\begin{itemize}
\item The ramification index $e_i=
(v_i(E^{\times}):v(F^{\times}))$
\item The inertia degree $f_i=[E_{v_i} :F_v]$.
\end{itemize}
\end{defn}

\noindent \textbf{Fact I:} For each $1 \leq i \leq g$, $e_i$ and
$f_i$ are finite. Moreover, we have the \textbf{fundamental
inequality}:
\begin{equation}
[E:F]=n \geq \sum^g_{i=1} e_if_i
\end{equation}
If the equality holds, it is called the \textbf{fundamental
equality}.\\

\noindent \textbf{Fact II:} When $(F,v)$ is henselian, $g=1$ and
we deal with a single ramification index $e_{E|F}=e$ and a
single inertia degree $f_{E|F}=f$. Furthermore, in this case,
$n$ is divisible by the product $ef$ and we can write
\begin{equation}
n=d_{E|F}e_{E|F}f_{E|F}
\end{equation}
for some positive integer $d_{E|F}$.
\begin{defn} The integer $d_{E|F}$ above is called the
\textit{defect} of the extension $(E|F,v)$. It is known that
$d_{E|F}$ is a power of $q$ ; where $q= \max \{\ch (F_v),1\}$.
\end{defn}
 
\subsection{Cyclic Extensions of Prime Degree} 
Let $E|F$ be a cyclic degree $p$ Galois extension of henselian valued
fields, where $p=\ch{F} >0$ . Let $\mathcal{O}_E$ and
$\mathcal{O}_F$ denote the valuation rings of $E$ and $F$
respectively. Let $\overline{E}$ and $\overline{F}$ be the
respective residue fields.

\begin{lm}\label{cyclic}
If $E|F$ is ramified and defectless, then we have two cases:

\begin{enumerate}[(a)]
\item Order of $v(E^{\times}) / v(F^{\times})$ is $p$ and it is
generated by $v_E(\mu)$ for some $\mu \in E^{\times}$.
\item There is some $\mu \in E^{\times}$ such that the residue
extension $\overline{E}|\overline{F}$ is purely inseparable of
degree $p$, generated by the residue class of $\mu$.
\end{enumerate}
\end{lm}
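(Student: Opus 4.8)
The plan is to read off the ramification data of $E|F$ directly from the fundamental equality, using that $[E:F]=p$ is prime. By Fact II, since $F$ is henselian there is a single ramification index $e=e_{E|F}$ and a single inertia degree $f=f_{E|F}$, and $p=[E:F]=d_{E|F}\,e\,f$. The defectlessness hypothesis says $d_{E|F}=1$, so $ef=p$, and since $p$ is prime exactly one of $(e,f)=(p,1)$ or $(e,f)=(1,p)$ holds. These two possibilities will furnish alternatives (a) and (b) respectively.

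In the case $(e,f)=(p,1)$, the quotient $v(E^{\times})/v(F^{\times})$ is a group of order $p$, hence cyclic; so it is generated by the class of $v_E(\mu)$ for any $\mu\in E^{\times}$ with $v_E(\mu)\notin v(F^{\times})$, and such $\mu$ exists precisely because the quotient is nontrivial. This is alternative (a).

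In the case $(e,f)=(1,p)$ we have $[\overline{E}:\overline{F}]=p$, and I claim the residue extension $\overline{E}|\overline{F}$ must be inseparable. If it were separable, choose $\bar\theta$ generating $\overline{E}$ over $\overline{F}$, lift its (separable, irreducible) minimal polynomial to a monic $g\in\mathcal{O}_F[T]$ of degree $p$, and apply Hensel's lemma in the henselian ring $\mathcal{O}_E$ to the simple root $\bar\theta$ of $\bar g$: this yields $\theta\in\mathcal{O}_E$ lifting $\bar\theta$ with $g(\theta)=0$. Then $F(\theta)\subseteq E$ has degree at most $p$ over $F$, while its residue field contains $\overline{F}(\bar\theta)=\overline{E}$, so by the fundamental inequality $[F(\theta):F]\geq p$; hence $E=F(\theta)$ and $E|F$ is unramified, contradicting the hypothesis that $E|F$ is ramified (this is essentially the defining property of unramifiedness recalled before the lemma). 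Therefore $\overline{E}|\overline{F}$ is inseparable of degree $p=\ch\overline{F}$, hence purely inseparable, and being of prime degree it is simple: $\overline{E}=\overline{F}(\bar\mu)$ for some $\bar\mu\in\overline{E}\setminus\overline{F}$. Lifting $\bar\mu\neq 0$ to a unit $\mu\in\mathcal{O}_E^{\times}\subseteq E^{\times}$ gives alternative (b).

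I expect the only genuine point to verify is the ferocious case: ruling out a separable residue extension of degree $p$ is exactly where the word \emph{ramified} (rather than merely defectless) is used, via Hensel's lemma and the fundamental inequality. The rest is bookkeeping with the fundamental equality and the primality of $p$.
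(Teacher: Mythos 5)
The paper states \Cref{cyclic} without proof, treating it as a standard consequence of the fundamental equality for henselian valued fields, so there is no in-paper argument to compare against. Your proof is correct and complete: the reduction via $p=d_{E|F}\,e\,f$ and $d_{E|F}=1$ to the dichotomy $(e,f)=(p,1)$ or $(1,p)$ is exactly the intended bookkeeping, and your handling of the two cases is sound (any element of a group of prime order generates it; an inseparable extension of degree $p=\ch\overline{F}$ has separable degree $1$, hence is purely inseparable and simple). The only place you do more work than necessary is in ruling out a separable residue extension: by the paper's convention (Remark 0.4), \emph{unramified} means precisely $e=1$ with $\overline{E}|\overline{F}$ separable of degree $p$, so the hypothesis ``ramified'' excludes that subcase by definition and the Hensel-lifting argument is not needed. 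That said, your Hensel argument is a legitimate, more self-contained justification of why that subcase deserves the name unramified, and it is carried out correctly.
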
 

\begin{lm}\label{mu}
Let $E|F, \mu$ be as in \Cref{cyclic} and $x_i \in F$ for all $0
\leq i \leq p-1$ . Then

$\dis \sum_{i=0}^{p-1} x_i \mu^{i} \in \mathcal{O}_E$ if and
only if $\dis x_i \mu^i \in \mathcal{O}_E $ for all $ i$.
\end{lm}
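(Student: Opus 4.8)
The plan is to separate the trivial implication from the substance and then follow the two alternatives of \Cref{cyclic}. The direction ``$x_i\mu^i\in\mathcal{O}_E$ for all $i$ $\Rightarrow$ $\sum_i x_i\mu^i\in\mathcal{O}_E$'' is immediate because $\mathcal{O}_E$ is a ring. For the converse, put $s:=\sum_{i=0}^{p-1}x_i\mu^i$, assume $s\in\mathcal{O}_E$, and note it suffices to prove $v_E(x_i\mu^i)\ge 0$ whenever $x_i\ne 0$; since $v_E$ restricts to $v_F$ on $F^\times$, we have $v_E(x_i\mu^i)=v_F(x_i)+i\,v_E(\mu)$ for such $i$.

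In case (a) of \Cref{cyclic}, $v_E(\mu)$ generates $v(E^\times)/v(F^\times)$, a cyclic group of order $p$. Then the nonzero terms of $s$ have pairwise distinct valuations: if $v_E(x_i\mu^i)=v_E(x_j\mu^j)$ with $i\ne j$, then $(i-j)\,v_E(\mu)=v_F(x_j)-v_F(x_i)\in v(F^\times)$, which forces $p\mid i-j$---impossible for $0\le i,j\le p-1$. By the ultrametric property of $v_E$, $v_E(s)=\min_i v_E(x_i\mu^i)$, so $v_E(s)\ge 0$ gives $v_E(x_i\mu^i)\ge 0$ for every $i$, as desired.

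In case (b) of \Cref{cyclic}, $\overline{E}|\overline{F}$ is purely inseparable of degree $p$ generated by the residue class $\overline{\mu}$ of $\mu$. Defectlessness gives $d_{E|F}=1$, so $e_{E|F}f_{E|F}=[E:F]=p$ with $f_{E|F}=p$; hence $e_{E|F}=1$, i.e.\ $v(E^\times)=v(F^\times)$, and $\mu\in\mathcal{O}_E^\times$ with $1,\overline{\mu},\dots,\overline{\mu}^{\,p-1}$ an $\overline{F}$-basis of $\overline{E}$. Suppose for contradiction $m:=\min_i v_F(x_i)<0$, and choose $t\in F^\times$ with $v_F(t)=-m>0$ (possible since $v(F^\times)=v(E^\times)$). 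Then every $tx_i\in\mathcal{O}_F$, with some $tx_{i_0}\in\mathcal{O}_F^\times$, while $v_E(ts)=v_F(t)+v_E(s)\ge -m>0$ yields $\sum_i \overline{tx_i}\,\overline{\mu}^{\,i}=0$ in $\overline{E}$; linear independence forces $\overline{tx_i}=0$ for all $i$, contradicting $tx_{i_0}\in\mathcal{O}_F^\times$. Thus every $x_i\in\mathcal{O}_F\subseteq\mathcal{O}_E$, and a fortiori $x_i\mu^i\in\mathcal{O}_E$.

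The argument is short and I do not expect a genuine obstacle. The only points requiring care are the bookkeeping facts invoked above---that $v_E$ extends $v_F$ (so \Cref{cyclic}(a) really delivers the coset-distinctness), and that defectlessness pins down $e_{E|F}=1$ together with the monomial basis of $\overline{E}$ in case (b)---and both follow at once from the fundamental equality of Section 1.3 and the statement of \Cref{cyclic}.
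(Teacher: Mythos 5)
Your proof is correct and follows essentially the same route as the paper: the trivial direction from $\mathcal{O}_E$ being a ring, distinctness of valuations in case (a) via the order of $v_E(\mu)$ in $v(E^\times)/v(F^\times)$, and a residue-field linear-independence argument in case (b). The only (cosmetic) difference is that in case (b) you clear denominators by multiplying the whole sum by a suitable $t\in F^\times$, whereas the paper divides the minimal-valuation terms by one of their coefficients; both reduce to the same contradiction with the $\overline{F}$-independence of $1,\overline{\mu},\dots,\overline{\mu}^{\,p-1}$.
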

\begin{proof} If $\dis x_i \mu^i \in \mathcal{O}_E $ for all $
i$, then clearly, $\dis \sum_{i=0}^{p-1} x_i \mu^{i} \in
\mathcal{O}_E$ . For the converse, we observe that if $v_E(x_i
\mu^{i})$ are all distinct for $0 \leq i \leq p-1$, then $\dis 0
\leq v_E \lb \sum_{i=0}^{p-1} x_i \mu^{i} \rb= \min_{0 \leq i
\leq p-1}v_E (x_i \mu^{i})$. Hence, the converse is true in this
case. Now let us break down the rest into two cases (a) and (b)
as described in the lemma above.\\
\begin{enumerate}[(a)]
\item We claim that in this case, $\dis v_E(x_i \mu^{i}); 0 \leq
i \leq p-1, \ x_i \neq 0$ all have to be distinct.\\
Assume to the contrary. Let $0 \leq i <j \leq p-1$ be such that
$v_E(x_i\mu^i)=v_E(x_j\mu^j); \ x_i, x_j$ are non-zero. Then
$\dis v_E(\mu^{j-i})=(j-i)v_E(\mu)=v_E \lb \frac{x_i}{x_j}\rb
\in v(F^{\times})$. This is impossible, since the order of
$v_E(\mu)$ in $v(E^{\times}) / v(F^{\times})$ is $p$ and $p
\nmid j-i$.
\item We observe that $v(\mu)=0$. The only case we need to
consider is when $\dis \min_{0 \leq i \leq p-1}v(x_i
\mu^{i})=v<0$ and the minimum is achieved by more than one
$x_i\mu^i$. Let $0\leq i_1<...<i_r \leq p-1 ; r \geq 2$ integer
such that $v(x_{i_s} \mu^{i_s})=v$ for all $1 \leq s \leq r$.
Clearly, $v(x_{i_s})=v$ for all $1 \leq s \leq r$. In
particular, $x_{i_1} \neq 0$. Since $\dis v \lb
\sum_{s=1}^rx_{i_s} \mu^{i_s}\rb>v$, we see that $\dis v \lb
\sum_{s=1}^r\frac{x_{i_s}}{x_{i_1}} \mu^{i_s} \rb>0$.

\noindent Equivalently, $\dis
z=\sum_{s=1}^r\frac{x_{i_s}}{x_{i_1}} \mu^{i_s} \in \m E$; where
$\m E$ is the maximal ideal of $\mathcal{O}_E$.

\noindent Since $\overline{\mu}^i$'s are $\overline{F}$-linearly
independent ; $\dis 0 \leq i \leq p-1$, $z \in \m E \Lr
\overline{z} = 0 \in \overline{E} \Lr \overline{\lb
\frac{x_{i_s}}{x_{i_1}}\rb }=0 \in \overline{F} \ $ for all $
s.$ However, this is impossible since $v(x_{i_s})=v$ for all $1
\leq s \leq r$.
\end{enumerate}
\end{proof}

\begin{lm}\label{dmu} 
 Let $\mu$ be as in \Cref{cyclic}. Then 
$\dl \mu$ generates the $\mathcal{O}_E$-module
$\om^1_{\mathcal{O}_E|\mathcal{O}_F}$.
\end{lm}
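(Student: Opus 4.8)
The plan is to show that $\dl\mu$ generates $\om^1_{\mathcal{O}_E|\mathcal{O}_F}$ by proving that for every $x\in E^\times$, the element $\dl x$ lies in the $\mathcal{O}_E$-submodule generated by $\dl\mu$ (modulo the image of $\mathcal{O}_E\otimes_{\mathcal{O}_F}\om^1_{\mathcal{O}_F}$, which is killed by definition of the relative module). Since $\om^1_{\mathcal{O}_E}$ is generated as an $\mathcal{O}_E$-module by the symbols $db$ for $b\in\mathcal{O}_E$ together with the $\dl x$ for $x\in E^\times$, and since (Log 2) gives $db = b\,\dl b$ for $b\neq 0$, it suffices to treat the generators $\dl x$, $x\in E^\times$. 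So the whole problem reduces to: express $\dl x$ in terms of $\dl\mu$ for an arbitrary $x\in E^\times$, working in $\om^1_{\mathcal{O}_E|\mathcal{O}_F}$.

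I would split into the two cases (a) and (b) of \Cref{cyclic}. In case (a), $v_E(\mu)$ generates $v(E^\times)/v(F^\times)$, so any $x\in E^\times$ can be written $x = c\,\mu^{i}$ with $c\in F^\times$ (more precisely $v_E(x) = i v_E(\mu) + v_F(c)$ for some $0\le i\le p-1$ and $c\in F^\times$, after adjusting; one uses that $E = F(\mu)$ and a valuation-theoretic normalization, which is where \Cref{mu} and the structure of $E$ as an $F$-vector space with basis $1,\mu,\dots,\mu^{p-1}$ enter). Then by (Log 1), $\dl x = \dl c + i\,\dl\mu$; but $\dl c$ comes from $\om^1_{\mathcal{O}_F}$ (as $c\in F^\times$, and one checks $c$ is a ratio of elements giving a class in the image of $\mathcal{O}_E\otimes\om^1_{\mathcal{O}_F}$), hence $\dl c \equiv 0$ in the relative module, so $\dl x \equiv i\,\dl\mu$. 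In case (b), $v(\mu)=0$ and $\bar\mu$ generates $\overline{E}|\overline{F}$ purely inseparably of degree $p$; here I would argue that $\mathcal{O}_E = \mathcal{O}_F[\mu]$ (using henselianness and defectlessness, i.e. the fundamental equality with $e=1$, $f=p$, $d=1$), so every $b\in\mathcal{O}_E$ is a polynomial in $\mu$ with $\mathcal{O}_F$-coefficients, and then additivity and the Leibniz rule reduce $db$ to an $\mathcal{O}_E$-multiple of $d\mu$ plus a term pulled back from $\mathcal{O}_F$; combined with (Log 2) this handles all generators.

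The main obstacle I anticipate is the passage from "$x\in E^\times$" to a clean normal form relative to powers of $\mu$, and more specifically verifying that symbols $\dl c$ with $c\in F^\times$ genuinely vanish in the relative module $\om^1_{\mathcal{O}_E|\mathcal{O}_F}$ — one must be careful that $F^\times$ is generated (up to units with the right residue/value behavior) by $\mathcal{O}_F$-data, so that $\dl c$ is in the image of $\mathcal{O}_E\otimes_{\mathcal{O}_F}\om^1_{\mathcal{O}_F}$ rather than merely "looking like" an $F$-form. In case (b) a subtler point is justifying $\mathcal{O}_E=\mathcal{O}_F[\mu]$ (one needs $\mu$ integral over $\mathcal{O}_F$ with the minimal polynomial reducing to the minimal polynomial of $\bar\mu$, and the defectless hypothesis to conclude there is no further enlargement); alternatively one can avoid this by working directly with $\dl x$ for $x\in E^\times$ and using \Cref{mu} to reduce an arbitrary element to the "monomial" $x_i\mu^i$ of minimal value, then peeling off lower-order terms by an induction on valuation, but this requires a convergence/termination argument that is cleanest when $\Gamma$ has rank $1$ or the extension is defectless — matching the standing hypotheses. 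I would therefore present case (a) via the value-group normalization and case (b) via $\mathcal{O}_E = \mathcal{O}_F[\mu]$, citing \Cref{cyclic} and \Cref{mu} for the structural input and using (Log 1), (Log 2), additivity and Leibniz to finish each case.
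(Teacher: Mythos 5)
Your proposal is correct and follows essentially the same route as the paper: the paper's (two-line) proof likewise reduces, via the basis $1,\mu,\dots,\mu^{p-1}$ and \Cref{mu}, to the monomial generators $\dl (x\mu^i)$ with $x$ in the base field, and then applies (Log 1) together with the vanishing of $\dl x$ in the relative module. The only point to tighten is that in case (a) a general element of $E^{\times}$ is not literally of the form $c\mu^i$ but a unit of $\mathcal{O}_E$ times such a monomial; that unit $u$ is handled by writing $\dl u = u^{-1}du$ and expanding $du$ via \Cref{mu}, additivity and the Leibniz rule --- exactly the tools you already cite as the "structural input."
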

\begin{proof} It is enough to consider the
elements $\dl (x \mu^i) ; 0 \leq i \leq p-1, \ x \in K^{\times} $.

\noindent $\dl (x \mu^i)= \dl x + i \ \dl \mu$. The
rest follows from the fact that $\dl x = 0$ in
$\om^1_{\mathcal{O}_E|\mathcal{O}_F}$.

\end{proof}

\subsection{Trace}

\begin{lm}\label{trace}
Let $R$ be an integrally closed integral domain with the field
of fractions $F$. Let $E|F$ be a separable extension of fields of
degree $n$. Suppose that $\bt \in E$ is such that $E=F(\bt)$ .
Let $g(T)= \min_F(\bt)$, the minimal polynomial of $\bt$ over
$F$. Then

\begin{enumerate}
\item $\Tr_{E|F}\lb \frac{\bt^m}{g'(\bt)}\rb$ is zero for all $1
\leq m \leq n-2$ and $\Tr_{E|F}\lb \frac{\bt^{n-1}}{g'(\bt)}\rb
=1$.
\item Assume, in addition, that $\bt$ is integral over $R$. Then
$\dis \{ x \in E \mid \Tr_{E|F}(xR[\bt]) \subset R \} =
\frac{1}{g'(\bt)}R[\bt]$
\end{enumerate}
\end{lm}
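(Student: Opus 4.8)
The plan is to reduce everything to the classical "dual basis" computation for the trace form of a monogenic extension $R[\beta]$. Since $E|F$ is separable and $E = F(\beta)$, the elements $1,\beta,\dots,\beta^{n-1}$ form an $F$-basis of $E$, and the standard fact (Euler's formula for the trace of $\beta^m/g'(\beta)$, sometimes attributed to Euler or Lagrange, cf. Serre's \emph{Local Fields} III.6) gives exactly the vanishing in part (1). I would prove (1) directly: write $g(T) = \prod_{i=1}^n (T - \beta_i)$ over an algebraic closure, with $\beta = \beta_1$, and use the partial fraction decomposition
\begin{equation*}
\frac{1}{g(T)} = \sum_{i=1}^n \frac{1}{g'(\beta_i)}\cdot\frac{1}{T - \beta_i}.
\end{equation*}
Multiplying by $T^m$ and expanding $\frac{T^m}{T-\beta_i}$ as a power series in $1/T$ at infinity, the coefficient of $T^{-1}$ on the right is $\sum_i \beta_i^m/g'(\beta_i) = \Tr_{E|F}(\beta^m/g'(\beta))$, while on the left $T^m/g(T)$ has a zero of order $n-m \geq 2$ at infinity for $1 \le m \le n-2$, so that coefficient vanishes; for $m = n-1$ it equals $1$. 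This is the cleanest route and avoids choosing an explicit dual basis.

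For part (2), I would first use (1) to identify the dual basis of $1,\beta,\dots,\beta^{n-1}$ with respect to the trace pairing. Writing $g(T) = \sum_{j} c_j T^{j}$ with $c_n = 1$ and setting $g(T)/(T-\beta) = \sum_{k=0}^{n-1} b_k T^k$ with $b_k \in R[\beta]$ (these are the classical "Horner" coefficients, $b_{n-1}=1$, $b_{k-1} = \beta b_k + c_k$, hence integral over $R$ whenever $\beta$ is), one checks from (1) that $\Tr_{E|F}\!\big(\beta^i \cdot \tfrac{b_j}{g'(\beta)}\big) = \delta_{ij}$. Therefore $\{b_j/g'(\beta)\}$ is the dual basis, and an element $x \in E$ satisfies $\Tr_{E|F}(x\,\beta^i) \in R$ for all $i$ (equivalently $\Tr_{E|F}(x\,R[\beta]) \subset R$) if and only if $x$ lies in $\sum_j R\,\frac{b_j}{g'(\beta)} = \frac{1}{g'(\beta)}\sum_j R\,b_j$. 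The final step is to observe that $\{b_0,\dots,b_{n-1}\}$ and $\{1,\beta,\dots,\beta^{n-1}\}$ generate the same $R$-module: the transition matrix between them is unitriangular with entries in $R$ (from the recursion $b_{k-1} = \beta b_k + c_k$), hence invertible over $R$. This gives $\{x \in E \mid \Tr_{E|F}(xR[\beta]) \subset R\} = \frac{1}{g'(\beta)}R[\beta]$.

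The one point needing care — and the main obstacle — is the very last identification $\sum_j R b_j = R[\beta]$ together with the claim that membership in the dual lattice is detected by pairing against the basis $\beta^i$ rather than against all of $R[\beta]$; both rest on $R$ being integrally closed so that the $b_k$ genuinely lie in $R[\beta]$ and the unitriangular change of basis is invertible over $R$ (not merely over $F$). Integral closedness of $R$ is also what guarantees $\Tr_{E|F}(R[\beta]) \subset R$ in the first place, so that $\frac{1}{g'(\beta)}R[\beta]$ is a genuine fractional ideal. Everything else is the formal manipulation of the partial-fraction identity above, which I would not write out in full detail.
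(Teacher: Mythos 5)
Your proof is correct and complete in outline. The paper does not actually prove this lemma — it only refers the reader to section 6.3 of [KKS] — and the argument you give (the partial-fraction/Euler identity for part (1), and for part (2) the dual basis formed by the Horner coefficients $b_k$ of $g(T)/(T-\bt)$ together with the unitriangular change of basis back to $1,\bt,\dots,\bt^{n-1}$) is exactly the classical proof contained in that reference and in Serre's \emph{Local Fields} III.6. You also correctly identify the two points where integral closedness of $R$ is needed, namely that $g\in R[T]$ (hence $b_k\in R[\bt]$ and $R[\bt]$ is $R$-free on $1,\dots,\bt^{n-1}$) and that the unitriangular transition matrix is invertible over $R$.
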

\noindent Details can be found in section 6.3 of \cite{K}.

\section{Discrete Valuation Rings}
\subsection{Classical Theory: Complete Discrete Valuation Rings
with Perfect Residue Fields}
Let $K$ be a complete discrete valued field of residue
characteristic $p>0$ with normalized  valuation $v_K$,
valuation ring $A$ and perfect residue field $k$. Consider
$L|K$, a finite Galois extension of $K$. Let $e_{L|K}$ be
the ramification index of $L|K$ and $G= \Gal (L|K)$. Let $v_L$ be
the valuation on $L$ that extends $v_K$, $B$ the integral
closure of $A$ in $L$ and $l$ the residue field of $L$. In this
case, we have the following invariants of ramification theory:
\begin{itemize}
\item The Lefschetz number $i(\s)$ and the logarithmic Lefschetz
number $j(\s)$ for $\s \in G\backslash \{1 \}$ are defined as
\begin{equation}
i(\s)=\min\{v_L(\s(a)-a)\mid a \in B \}
\end{equation}
\begin{equation}
j(\s)=\min\{v_L(\frac{\s(a)}{a} -1)\mid a \in L^{\times} \}
\end{equation}
Both the numbers are non-negative integers.
\item For a finite dimensional representation $\rho$ of $G$ over
a field of characteristic zero, the Artin conductor $\Art(\rho)$
and the Swan conductor $\Sw(\rho)$ are defined as
\begin{equation}
\Art(\rho)= \frac{1}{e_{L|K}} \sum_{\s \in G\backslash \{1 \}}
i(\s)(\dim(\rho)-\Tr(\rho(\s)))
\end{equation}
\begin{equation}
\Sw(\rho)= \frac{1}{e_{L|K}} \sum_{\s \in G\backslash \{1 \}}
j(\s)(\dim(\rho)-\Tr(\rho(\s)))
\end{equation}
Both these conductors are integers. This is a consequence of the
Hasse-Arf Theorem (see \cite{S} ).
\end{itemize}

\noindent The invariants $j(\s)$ and $\Sw(\rho)$ are the parts of
$i(\s)$ and $\Art(\rho)$, respectively, which handle the wild
ramification. We wish to generalize these to all valuation rings
considered in this paper. Namely, the case where $L$ is a
non-trivial Artin Schreier extension of $K$, a valued field with
henselian valuation ring, defined by $\ap^p - \ap =f$, where $f
\in K$ . Let us begin with the case of discrete valuation rings,
possibly with imperfect residue fields.

\subsection{ Best $f$ and Swan Conductor}
Let $K$ be a complete discrete valued field of residue characteristic
$p>0$ with normalized  valuation $v_K$, valuation ring
$A$ and residue field $k$. We do not assume that $k$ is perfect.
Let $L=K(\alpha)$ be the (non-trivial) Artin-Schreier extension
defined by $ \ap^p - \ap =f$, where $f \in K$. Let $v_L$, $B$
and $l$ denote the valuation, valuation ring and the residue
field of $L$, respectively. We define the Swan conductor of this
extension as described below.

\begin{defn}Let $\mathfrak{P} : K \to K $ denote the additive
homomorphism $x \mapsto x^p-x$. Note that the extension $L$ does
not change when $f$ is replaced by any element $g \in K$ such
that $g \equiv f$ mod $ \mathfrak{P} (K)$. Because, if
$g=f+h^p-h$ for some $h \in K$, then the corresponding
Artin-Schreier extension is generated by $\ap +h$ over $K$.

\begin{enumerate}
\item If there is such $g \in A$, $L$ is unramified over $K$ and
the Swan conductor is defined to be $0$.
\item If there is no such $g \in A$, the Swan conductor is
defined to be $\min \{-v_K(g) \mid g \equiv f \mod \mathfrak{P}
(K) \}$. An element $f$ of $K$ which attains this minimum will
be referred to as `` best $f$ " throughout this paper. It is
well-defined modulo $\mathfrak{P} (K)$.
\end{enumerate}
This definition coincides with the classical definition of the
Swan conductor when $k$ is perfect. \\\\Existence of best $f$
relies on the existence of $\min \{-v_K(g) \mid g \equiv f \mod
\mathfrak{P} (K) \}$. This is guaranteed in the case of discrete
valuation rings, but not in the case of general valuation rings.
\end{defn}
\begin{ex} Let $K=k((t))$ where $k$ is of characteristic $p>0$.
$t $ is a prime element of $K$. Let $n$ be a positive integer
coprime to $p$. In this case, the Swan conductor of the
extension given by $\dis \ap^p-\ap= \frac{1}{t^{n}}$ is $n$.

More generally, let $m \geq 0$ be an integer and $n$ as above.
Then the Swan conductor of the extension given by $\dis
\ap^p-\ap= \frac{1}{t^{np^m}}$ is also $n$. This follows from
$\dis \frac{1}{t^{np^m}}\equiv \frac{1}{t^{n}}$ mod $
\mathfrak{P}(K)$.
\end{ex} 
\noindent A concrete description of the Swan conductor is given by the
following lemma:
\begin{lm}
By replacing $f$ with an element of $\{g \in K \mid g \equiv f
\mod \mathfrak{P} (K) \}$, we have best $f$ which satisfies
exactly one of the following properties:
\begin{enumerate}[(i)]
\item $f \in A$
\item $v_K(f)=-n$ where $n$ is a positive integer relatively
prime to $p$.
\item $f=at^{-n}$ where $n>0$, $p|n$, $t$ is a prime element of
$K$ and $a \in A^{\times}$ such that the residue class of $a$ in
$k$ does not belong to $k^p=\{x^p \mid x \in k \}$.
\end{enumerate}
In the case (i), the Swan conductor is 0. In the cases (ii) and
(iii), the Swan conductor is $n$.
\end{lm}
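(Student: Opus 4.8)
The plan is to start from the given element $f \in K^\times$ defining $L$, and to normalize it modulo $\mathfrak{P}(K) = \{h^p - h \mid h \in K\}$ so that $-v_K(f)$ is minimal; this is legitimate because the extension is unchanged under $f \mapsto f + h^p - h$, and the minimum exists for a complete discrete valuation ring (it is the Swan conductor, already defined). So from the outset I may assume $-v_K(f)$ is as small as possible, i.e. $f$ is a ``best $f$''. The goal is then to show that after possibly one further adjustment by an element of $\mathfrak{P}(K)$ (which does not change $-v_K(f)$), we land in exactly one of (i), (ii), (iii).

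First I would dispose of case (i): if the best $f$ can be taken in $A$, then $v_K(f) \geq 0$, and by the definition of the Swan conductor in 2.2 the extension is unramified and the Swan conductor is $0$. Otherwise $v_K(f) = -n$ for some integer $n > 0$. Write $n = p^m n'$ with $p \nmid n'$. If $m = 0$, we are already in case (ii) and there is nothing to do; the Swan conductor is $n$ by definition since $f$ is best. The heart of the argument is the case $p \mid n$: I would write $f = a t^{-n}$ with $t$ a prime element of $K$ and $a \in A^\times$, and examine the residue class $\bar a \in k$. If $\bar a \in k^p$, say $\bar a = \bar c^{\,p}$ with $c \in A^\times$, then $f - (c t^{-n/p})^p = f - c^p t^{-n} \in A^\times t^{-n} \cdot (\text{something of strictly larger valuation})$ — more precisely $v_K(f - c^p t^{-n}) > -n$ — and hence $f \equiv f - \big((c t^{-n/p})^p - c t^{-n/p}\big) \bmod \mathfrak{P}(K)$ has strictly smaller $-v_K$, contradicting minimality of $-v_K(f)$. (Here one uses $v_K(c t^{-n/p}) = -n/p < 0 < $ nothing relevant, so the subtracted term $c t^{-n/p}$ does not interfere: $v_K(c t^{-n/p}) = -n/p > -n$.) Therefore $\bar a \notin k^p$, and we are in case (iii); since $f$ is best, the Swan conductor is $-v_K(f) = n$.

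The mutual exclusivity of (i), (ii), (iii) is then immediate from the value $v_K(f)$: in (i) it is $\geq 0$, in (ii) and (iii) it is $-n < 0$, and (ii) versus (iii) is distinguished by whether $p \mid n$; and the remaining content is just that the trichotomy is exhaustive, which the above case analysis establishes. Finally, the claim about the Swan conductor in each case follows directly: in case (i) it is $0$ by definition, and in cases (ii), (iii) the element $f$ we have produced attains the minimal value $-v_K(f) = n$, so by the definition in 2.2 the Swan conductor equals $n$.

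The main obstacle is the reduction step in case (iii): one must check carefully that subtracting off $(c t^{-n/p})^p - c t^{-n/p}$ strictly raises the valuation and does not accidentally create a term of valuation $-n$ or lower from the linear part $-c t^{-n/p}$. Since $v_K(-c t^{-n/p}) = -n/p$ and $-n/p > -n$ (as $p \geq 2$ and $n > 0$), the linear part is harmless, and $v_K(f - c^p t^{-n}) > -n$ because the leading terms cancel in the residue field; so the obstacle is really just bookkeeping, and there is no essential difficulty once the valuations are tracked. I would also remark that the argument uses completeness (equivalently, that $k$-coefficients and a prime element $t$ suffice to write $f$ as a convergent series, or at least that the residue-field cancellation lifts) only through the existence of ``best $f$'', which was already granted for discrete valuation rings.
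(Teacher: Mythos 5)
Your proof is correct: the paper states this lemma without proof, and your argument (normalize to a best $f$, then, when $p \mid n$ and $\bar a \in k^p$, subtract $(ct^{-n/p})^p - ct^{-n/p}$ and check that both $v_K((a-c^p)t^{-n}) > -n$ and $v_K(ct^{-n/p}) = -n/p > -n$, contradicting minimality) is exactly the standard computation the paper relies on. The only point you leave implicit is why the minimum defining best $f$ exists: no representative $g$ can lie in $\mathfrak{m}_K$ (otherwise henselianity would split the equation and $L|K$ would be trivial), so $\{-v_K(g) \mid g \equiv f \bmod \mathfrak{P}(K)\}$ is a nonempty set of non-negative integers.
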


\subsection{Refined Swan Conductor $\rsw$}
\begin{defn}
Let $K$ be a discrete valued field of residue characteristic
$p>0$ with normalized  valuation $v_K$, valuation ring
$A$ and residue field $k$ (possibly imperfect). Let
$L=K(\alpha)$ be the Artin-Schreier extension defined by $\ap^p
- \ap =f$ where $f$ is best. The \textit{refined Swan conductor
($\rsw$)} of this extension is defined to be the $A$-homomorphism
$\dis df: \lb \frac{1}{f} \rb \to \ok$ given by $ h \mapsto
(hf)\ \dl f$. We note that for $\dis h \in \lb \frac{1}{f}
\rb, hf \in A$ and hence, $(hf)\ \dl f$ is indeed an element
of $\ok$.

The $A$-homomorphism $\rsw$ is well-defined up to certain relations, as discussed
below.
\end{defn}
\begin{lm} Let $L|K$ be as above, given by best $f$, $H=\lb
\frac{1}{f} \rb$. Then $\rsw$ is well-defined as the
$A$-homomorphism $: H \to \ok / \I \ok$; where $\I$ is the ideal
$\{x \in K \mid v_K(x)\geq \lb \frac{p-1}{p} \rb v_K \lb
\frac{1}{f} \rb \}$ of $A$.
\end{lm}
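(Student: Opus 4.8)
The plan is first to pin down what ``well-defined'' should mean here and then to reduce everything to one short formal computation inside $\ok$. If $f\in A$, then $L|K$ is unramified, $v_K(1/f)\le 0$, so $\I=A$ and $\ok/\I\ok=0$; the statement is then vacuous, and I assume from now on that $f$ is best of type (ii) or (iii) of the concrete description above and set $n:=-v_K(f)>0$, the Swan conductor. Since $hf\in A$ for every $h\in H=(1/f)$, the rule $h\mapsto (hf)\,\dl f$ is clearly an $A$-module homomorphism $\rsw_f\colon H\to\ok$, and both $H$ and the ideal $\I$ depend only on $v_K(1/f)=n$; so the content of the lemma is exactly that the induced map $H\to\ok/\I\ok$ is unchanged when $f$ is replaced by another best element $g$ of the coset $f+\mathfrak{P}(K)$. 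Because $H=(1/f)$ is generated over $A$ by $1/f$, with $\rsw_f(1/f)=\dl f$, and because $v_K(g)=v_K(f)=-n$ forces $g/f\in A^\times$ and $\rsw_g(1/f)=(g/f)\,\dl g$, it suffices to prove
\[
(g/f)\,\dl g\equiv\dl f\pmod{\I\ok}
\]
whenever $g=f+h_0^p-h_0$ is again best; we may assume $h_0\notin\F_p$, so that $u:=g-f=h_0^p-h_0\neq 0$.

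Next I would extract from bestness the key estimate $v_K(h_0)\ge -n/p$: were $v_K(h_0)<-n/p$, then $v_K(h_0^p-h_0)=p\,v_K(h_0)<-n=v_K(f)$, hence $v_K(g)<-n$, contradicting that $g$ is best. Equivalently $v_K(h_0^p)\ge -n$, so in particular $h_0/f\in A$ with $v_K(h_0/f)=v_K(h_0)+n\ge\tfrac{p-1}{p}\,n$, i.e.\ $h_0/f\in\I$. Putting $w:=u/f$ (which lies in $A$, since $v_K(w)\ge 0$ by the estimate), a purely formal manipulation using only the relations Log 1, Log 2, additivity of $d$, and the identity $\dl(h_0^p)=p\,\dl h_0=0$ gives $\dl g=\dl f+\dl(g/f)$, then $(g/f)\,\dl(g/f)=d(g/f)=d(1+w)=dw=w\,\dl w$, and therefore
\[
(g/f)\,\dl g-\dl f=w\,\dl f+w\,\dl w=w\,\dl(fw)=w\,\dl u .
\]
So everything is reduced to showing $w\,\dl u\in\I\ok$.

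If $v_K(h_0)\ge 0$ this is immediate, as then $u\in A$ and $v_K(w)\ge n\ge\tfrac{p-1}{p}\,n$, so $w\in\I$. The delicate case, and the main obstacle, is $v_K(h_0)<0$: here the crude bound only gives $v_K(w)\ge p\,v_K(h_0)+n\ge 0$, which need not reach $\tfrac{p-1}{p}\,n$, and one must exploit the vanishing of $p$-th powers under $d$. Since $v_K(h_0^p)<v_K(h_0)<0$, factor $u=h_0^p\bigl(1-h_0^{1-p}\bigr)$ with $s:=h_0^{1-p}\in\m A$ (as $v_K(s)=(1-p)v_K(h_0)>0$), so $1-s\in A^\times$; then $\dl u=\dl(h_0^p)+\dl(1-s)=\dl(1-s)$, and from $(1-s)\,\dl(1-s)=d(1-s)=-\,ds$ together with $ds=d\bigl((h_0^{-1})^{p-1}\bigr)=(p-1)(h_0^{-1})^{p-2}\,d(h_0^{-1})=h_0^{1-p}\,\dl h_0$ (using $d(h_0^{-1})=-h_0^{-1}\dl h_0$ and $p-1\equiv -1$ in $\F_p$) one obtains $\dl u=-(1-s)^{-1}h_0^{1-p}\,\dl h_0$. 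Hence
\[
w\,\dl u=-\bigl(u f^{-1}(1-s)^{-1}h_0^{1-p}\bigr)\,\dl h_0 ,
\]
and the coefficient has valuation $p\,v_K(h_0)+n+(1-p)v_K(h_0)=v_K(h_0)+n\ge\tfrac{p-1}{p}\,n$, so it lies in $\I$ and $w\,\dl u\in\I\ok$. The whole point is that the factor $h_0^{1-p}$ produced by differentiating cancels $p\,v_K(h_0)$ down to $v_K(h_0)$, which is exactly what makes the ideal $\I$ --- and nothing larger --- suffice.

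Feeding this back, $\rsw_g$ and $\rsw_f$ agree modulo $\I\ok$ on the generator $1/f$, hence on all of $H$, which is the assertion. The only things I am deliberately skipping are the routine verifications that at each step the relevant elements lie in $A$ --- e.g.\ $w$, $g/f$, $1-s\in A$, so that Log 2 and additivity of $d$ apply --- together with the elementary valuation bookkeeping.
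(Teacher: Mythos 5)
Your proof is correct and is at heart the same argument as the paper's: the paper uses the maps $dx$ of Definition 1.4 and Lemma 1.5 to get $dg-df=-da$ on $H$ (since $d(a^p)=0$) and then bounds $v_K(a/f)\geq\tfrac{p-1}{p}v_K(1/f)$ from the bestness estimate $pv_K(a)\geq v_K(f)$, which is exactly your key inequality $v_K(h_0)\geq -n/p$. Your longer $\dl$-manipulation in the case $v_K(h_0)<0$ in fact collapses to the same term, since $uf^{-1}(1-s)^{-1}h_0^{1-p}=h_0/f$, so you have simply re-derived by hand what the paper's Lemma 1.5 packages.
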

\begin{proof} Let $g$ be best as well. Hence, there exists $ a
\in K$ such that $g =f + a^p -a$ and $v_K(f+a^p-a)=v_K(f)$.
Since $v_K(a) \geq v_K(f), H \cap M_a=H$. By Lemma 1.5,
$dg-df=-da$ on $H$.

For $\dis h=\frac{b}{f} \in H; b \in A, da(h)=ha\ \dl a=b \lb
\frac{a}{f} \rb \dl a \in \lb \frac{a}{f} \rb \ok$. It is
enough to show that $\dis v_K \lb \frac{a}{f}\rb \geq \lb
\frac{p-1}{p} \rb v_K \lb \frac{1}{f} \rb $. This is clear in
the case $a \in A$.

If $a \in K \backslash A$, then $ v_K(a^p-a)=pv_K(a) \geq
v_K(f)=v_K(f+a^p-a)$. Hence, proved.
\end{proof}

\begin{rem} We note that $\I =\{x \in K \mid v_K(x)\geq \lb
\frac{p-1}{p} \rb v_K \lb \frac{1}{f} \rb \}=\{x \in K \mid
v_L(x)\geq \lb \frac{p-1}{p} \rb v_L \lb \frac{1}{f} \rb \}$
\end{rem}

\section{Small Results}
In this section, we present some small results that help us
understand the two special cases I and II better. First we
extend the notion of ``best $f$" to the general case.
\subsection{Best $f$}
\begin{defn} Let $K$ be as in 0.1, $\mathfrak{P}:K \to K$ as
before. We say that $f \in K^{\times}$ is best if either $f \in
A^{\times}$ or if $f$ satisfies $-v(f)=\inf \{-v(g) \mid g
\equiv f \mod \mathfrak{P}(K) \}$.

\end{defn}

Since we cannot guarantee the existence of best $f$ in general,
as seen in the example below, we will reinterpret the notion of
the refined Swan conductor using the logarithmic differential
$1$-forms over $A$, as stated in \Cref{comm dia}.

\begin{ex} \textbf{(Non-DVR)}

Consider the extension $L|K$ as described in section 8. The
value group $\Gamma$ is isomorphic to $\Z[\frac{1}{p}]$. We have
a sequence of elements $f_i \in \mathfrak{A}$ for all
integers $i \geq 0$, each better than the previous one, such
that

\begin{enumerate}
\item $\dis  -v(f_i)= n- \sum_{j=1}^i \frac{1}{p^j}$ 
\item The ideal $H$ of $A$ is generated by $\{ \frac{1}{f_i}\ |
\ i \geq 0 \}$.
\end{enumerate}
Since
$\dis \inf_{i \geq 0} -v(f_i)=n - \frac{1}{p-1}=c \in \R
\backslash \Gamma,$ there is no best $f$.
\end{ex}
\begin{cor}\label{dlogalpha}
Let $K$ be as in 0.1 and $L|K$ satisfy (I), given by $\ap^p-\ap=f$ where $f$ is best.
Then
\begin{enumerate}[(i)]
\item $B$ is described as follows: 
\begin{enumerate}[(a)]
\item If $\dis e_{L|K}=p , B= \sum_{i=0}^{p-1} A_i\ap^i$ where
$A_0:=A$ and for all $1 \leq i \leq p-1$, \\$A_i := \{ x \in K \
| \ v_L(x) \geq -iv_L(\ap) \}= \{ x \in A \mid v_L(x) >
-iv_L(\ap) \}$ .
\item If $\dis e_{L|K}=1 , B= A[\ap \g]$ where $\g \in A$ such
that $\ap \g \in B^{\times}$.
\end{enumerate}
\item $\dis \dl \ap$ generates the $B$-module $\dis \ol$.
\end{enumerate}
\end{cor}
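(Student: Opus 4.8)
The plan is to treat the two sub-cases of (I) separately, using \Cref{cyclic} to pin down a uniformizer or residue generator $\mu$ and then identifying $\ap$ (up to a scalar in $K$) with such a $\mu$. First I would observe that since $f$ is best and $L|K$ is ramified (the unramified case is excluded by the second remark), we are in case (ii) or (iii) of the DVR-style trichotomy, transported to the present setting: either $v_L(\ap)<0$ with $v_L(\ap)$ generating $v_L(L^\times)/v_L(K^\times)$ of order $p$ (this is case (a) of \Cref{cyclic}, the wild case $e_{L|K}=p$), or $v_L(\ap)=0$ and the residue class $\overline{\ap}$ generates the purely inseparable residue extension $l|k$ of degree $p$ (case (b) of \Cref{cyclic}, the ferocious case $e_{L|K}=1$). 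In the ferocious case one should first replace $\ap$ by $\ap\g$ for a suitable $\g\in A$ so that $\ap\g\in B^\times$; I would note that $\ap\g$ still satisfies an Artin--Schreier-type equation over $K$ generating $L$ and that $\overline{\ap\g}$ still generates $l$ over $k$.

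For part (i)(a): given the basis $1,\ap,\dots,\ap^{p-1}$ of $L$ over $K$, any $b\in B$ writes uniquely as $b=\sum_{i=0}^{p-1} x_i\ap^i$ with $x_i\in K$, and by \Cref{mu} (applied with $\mathcal{O}_E=B$, $\mathcal{O}_F=A$, $\mu=\ap$) we get $b\in B\iff x_i\ap^i\in B$ for all $i$, i.e.\ $v_L(x_i)\ge -i\,v_L(\ap)$ for all $i$; this is exactly the description $B=\sum A_i\ap^i$. The equality $A_i=\{x\in A\mid v_L(x)>-iv_L(\ap)\}$ follows because $-i\,v_L(\ap)\notin v_L(K^\times)$ for $1\le i\le p-1$ (order of $v_L(\ap)$ modulo $v_L(K^\times)$ is $p$), so the inequality $v_L(x)\ge -i v_L(\ap)$ with $v_L(x)\in v_L(K^\times)$ can never be an equality and forces $v_L(x)>0$ in particular, giving $x\in A$. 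For part (i)(b): with $\ap\g\in B^\times$ and $\overline{\ap\g}$ generating the purely inseparable degree-$p$ extension $l|k$, the classical argument shows $1,\ap\g,\dots,(\ap\g)^{p-1}$ have $k$-linearly independent residues, hence $A[\ap\g]=\sum_{i=0}^{p-1}A(\ap\g)^i$ is a valuation ring between $A$ and $B$ with the full residue extension and trivial value-group extension, so by the fundamental equality (and defectlessness) it must equal $B$.

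For part (ii): by \Cref{dmu} applied to $\mathcal{O}_E=B,\ \mathcal{O}_F=A$ with $\mu=\ap$ in case (a) (resp.\ $\mu=\ap\g$ in case (b)), $\dl\mu$ generates the $B$-module $\om^1_{B|A}=\ol$. In case (b) I would then note $\dl(\ap\g)=\dl\ap+\dl\g$ and that $\dl\g=0$ in $\ol$ since $\g\in A$, so $\dl\ap$ also generates $\ol$; in case (a) we already have $\mu=\ap$ directly. The main obstacle I anticipate is the bookkeeping in the ferocious case (i)(b): one must argue carefully that $A[\ap\g]$ is \emph{all} of $B$ and not a proper subring — this is where defectlessness (assumption (I)) is genuinely used, via the fundamental equality $[L:K]=e_{L|K}f_{L|K}$ ruling out any further residue growth or value-group growth inside $B/A[\ap\g]$ — whereas part (ii) is then essentially immediate from \Cref{dmu}.
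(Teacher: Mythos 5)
Your proposal is correct and follows essentially the same route as the paper: identify $\ap$ (resp.\ $\ap\g$) with the element $\mu$ of \Cref{cyclic}, apply \Cref{mu} to get the description of $B$, and apply \Cref{dmu} (plus $\dl\g=0$) for part (ii). Your detour in case (i)(b) through linear independence of residues and the fundamental equality just re-derives the content of \Cref{mu}(b), which the paper instead cites directly.
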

\begin{proof} \begin{enumerate}[(i)]
\item We apply \Cref{mu}
\begin{enumerate}[(a)]
\item $-v_0:=v_L(\ap)$ generates the group $v_L(L^{\times}) /
v_L(K^{\times})$ of order $p$. In particular, for all $1 \leq i
\leq p-1, \\iv_0 \notin v_L(K^{\times})$. For $x \in
K^{\times}$, $x\ap^i \in B$ if and only if $v_L(x) \geq iv_0$ if
and only if $v_L(x) > iv_0$.
\item Since $e=1$, there exists $\g \in A$ such that $\ap \g \in
B^{\times}$. We just take $\mu= \ap\g$.
\end{enumerate}
\item This  is a direct consequence of (i) and \Cref{dmu}.
\end{enumerate}
\end{proof}
\subsection{Fractional Ideals in a Valued Field:} 

Let $F$ be a valued field with  valuation $v$, value
group $\Gamma$, valuation ring $\mathcal{O}:=\mathcal{O}_F$ and
residue field $\overline{F}$. A subset $S$ of $F$ is a
\textit{fractional ideal} of $F$ if there exists $0 \neq b \in
\mathcal{O}$ such that $bS$ is an (integral) ideal of
$\mathcal{O}$.

We note that in such a case, $\dis S= \{x \in F \ |\ v(x) \geq
v(s)$ for some $s \in S \}= \cup_{s \in S} \ s \mathcal{O}$.

\begin{defn} Consider the case (II), we can regard $\Gamma$ as
an ordered subgroup of $\R$. Let $S$ be a fractional ideal of
$F$ and $\dis \inf_{s \in S} v(s)= t \in \R$. We define
\textit{$F$-valuation of $S$} as follows: \begin{enumerate}[(i)]
\item If $t \in \Gamma \subset \R, v(S):= t$
\item If $t \in \R \backslash \Gamma, v(S):=t^{+}$
\end{enumerate}

We can define the \textit{$F$-valuation of $S$} by (i) when $S$
is generated by a single element $s \in F$, even if $\Gamma$ is
not isomorphic to an ordered subgroup of $\R$. In that case,
$v(S):=v(s)$ and $S=s' \mathcal{O}$ for any $s' \in F$ such that
$v(s')=v(s)$.

\end{defn}

\subsection{Defect and $\js$}
\begin{lm}\label{jh} The fractional ideals $\js$ and $H$ are
integral ideals of $L$ and $K$ respectively, that is,
\begin{enumerate}[(i)]
\item $ \js = \ \lb \{ \frac{\s(b)}{b}-1 \mid b \in L^{\times}
\} \rb \ \subset B$
\item $ H= \ \lb \{ \frac{1}{f} \mid f \in \mathfrak{A} \} \rb
\ \subset A$
\end{enumerate}
\end{lm}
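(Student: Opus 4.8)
The plan is to show the two ideals are contained in their respective rings by exhibiting, for each generator, a witness element whose valuation is nonnegative. I treat the two claims in parallel since they are structurally identical: in each case we have a set of elements of the form "$\s$ applied to something, divided by (or minus) that something, normalized", and we must check these all lie in the valuation ring.

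First I would prove (ii). Fix $f \in \mathfrak{A}$; we must show $v_K(1/f) \geq 0$, i.e. $v_K(f) \leq 0$. Suppose not, so $v_K(f) > 0$, i.e. $f \in \m K$. Then by Hensel's lemma (applicable since $A$ is henselian and $T^p - T - f$ has the simple root $0$ modulo $\m K$), the polynomial $T^p - T = f$ has a root in $K$, so $L = K$, contradicting $f \in \mathfrak{A}$ (which requires the solutions to generate $L$ properly over $K$, i.e. $[L:K]=p$). Hence $v_K(f) \leq 0$, so $1/f \in A$, and since $H$ is generated by such elements $1/f$, we get $H \subset A$.

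Next I would prove (i). Let $b \in L^{\times}$; we must show $v_L\!\left(\frac{\s(b)}{b} - 1\right) \geq 0$, equivalently $v_L(\s(b) - b) \geq v_L(b)$. Since $\s$ is an automorphism of $L$ over $K$ and $v_L$ is the unique extension of $v_K$ to $L$ (as $A$ is henselian), $\s$ preserves $v_L$: that is, $v_L(\s(b)) = v_L(b)$ for all $b \in L^{\times}$. Therefore $v_L(\s(b) - b) \geq \min\{v_L(\s(b)), v_L(b)\} = v_L(b)$, which gives $\frac{\s(b)}{b} - 1 \in B$ (i.e. $v_L$ of it is $\geq 0$). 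Since these elements generate $\js$, we conclude $\js \subset B$. The same valuation-preservation argument, incidentally, shows $v_L(\s(b) - b) \geq v_L(b) \geq 0$ whenever $b \in B$, so $\is \subset B$ as well, though only the statement for $\js$ is asked here.

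The only subtle point — and the one I would state carefully — is the uniqueness of the valuation extension and the resulting $\s$-invariance of $v_L$; this is exactly "Fact II" from Section 1.3 applied to the henselian field $K$, so the argument is short. The rest is just the observation that an ideal generated by elements of a valuation ring (equivalently, by elements of nonnegative valuation) is contained in that valuation ring, which is immediate from the description of fractional ideals as $\cup_{s\in S} s\mathcal{O}$ recalled in Section 3.2. I would also note that both $\js$ and $H$ are a priori fractional ideals (generated by subsets of $L$, resp. $K$, that are bounded below in valuation), so the content of the lemma is precisely the integrality, i.e. that the bound below can be taken to be $0$.
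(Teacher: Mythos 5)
Your proof is correct and follows essentially the same route as the paper: the ultrametric inequality together with $v_L(\s(b))=v_L(b)$ for part (i), and henselianity forcing $T^p-T-f$ to split when $f\in\m K$ for part (ii). You are somewhat more explicit than the paper about why $\s$ preserves $v_L$ (uniqueness of the extension over a henselian base), but the substance is identical.
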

\begin{proof} \begin{enumerate}[(i)]
\item For $b \in L^{\times}$, $v_L(\s(b)-b) \geq \min
\{v_L(\s(b)), v_L(b)\}=v_L(\s(b))=v_L(b)$. Hence,
$\frac{\s(b)}{b}-1 \in B$.
\item We need to show that for each $\dis f \in \mathfrak{A},
\frac{1}{f} \in A$. Assume to the contrary that there is some $f
\in \m K \cap \mathfrak{A}$. Since $K$ is henselian, roots of
$\ap^p-\ap=f$ are already in $K$, contradicting our assumption
that $L|K$ is non-trivial.
\end{enumerate}
\end{proof}
We use the following lemma in order to prove 
\Cref{Di}, which plays a very important role in the proof of \Cref{j}.
\begin{lm}\label{sig} Let $L|K$ be as in 0.1 and $b \in B$ such
that $\s(b)-b$ generates $\is$. Define $A$-linear maps $D_i: L
\to L$ inductively for $0 \leq i \leq p-1$ by \begin{equation}
D_0:=id_L : L \to L , \ D_i(x):=
\frac{(\s-1)(D_{i-1}(x))}{(\s-1)(D_{i-1}(b^i))}\ ; 1 \leq i \leq
p-1 \end{equation}
These maps have the following properties:
\begin{enumerate}
\item $\dis D_i(b^i)=1; \ 0 \leq i \leq p-1$
\item $\dis D_i(b^j)=0 ;\ 0 \leq j \leq i-1, \ 1 \leq i \leq
p-1$
\item For $\dis x \in B, \ D_i(xb)=\s^i(b)D_i(x)+D_{i-1}(x); \ 0
\leq i \leq p-1$ (If $i=0$, we set $D_{i-1}(x)=0$.)
\item $\dis D_i(b^{i+1})= \sum_{j=0}^{i} \s^j(b); \ \ 0 \leq i
\leq p-1$
\item For each $0 \leq i \leq p-2$, $\dis
(\s-1)(D_i(b^{i+1}))=\s^{i+1}(b)-b$ and hence, is a generator of
$\is$.
\\In particular, it is non-zero.
\end{enumerate}
\end{lm}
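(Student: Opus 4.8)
The plan is to prove properties (1)--(4) simultaneously by induction on $i$, establishing the well-definedness of each $D_i$ along the way, and then to deduce (5) with the help of a short value-group computation. The only input about $b$ that is needed is that $\s(b)\neq b$: indeed $\is\neq 0$ (otherwise $\s$ would fix $B$, hence $L$), and $\s(b)-b$ generates it, so $\s(b)\neq b$, whence $L=K(b)$ and $b,\s(b),\dots,\s^{p-1}(b)$ are the $p$ distinct roots of the separable minimal polynomial of $b$; in particular $\s^m(b)\neq b$ for $1\le m\le p-1$. This is precisely what makes $D_i$ well-defined: by the inductive hypothesis (property (4) for $D_{i-1}$), $D_{i-1}(b^i)=\sum_{j=0}^{i-1}\s^j(b)$, so the denominator in the definition of $D_i$ equals $(\s-1)(D_{i-1}(b^i))=\s^i(b)-b\neq 0$, and $D_i$ is a well-defined $A$-linear endomorphism of $L$ satisfying $(\s^i(b)-b)\,D_i(x)=(\s-1)(D_{i-1}(x))$ for every $x\in L$.

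The base case $i=0$ is immediate from $D_0=\mathrm{id}_L$. In the inductive step $1\le i\le p-1$: property (1) reads $D_i(b^i)=(\s^i(b)-b)^{-1}(\s^i(b)-b)=1$; property (2) holds because for $0\le j\le i-1$ the hypothesis gives $D_{i-1}(b^j)\in\{0,1\}$, so $(\s-1)(D_{i-1}(b^j))=0$ and hence $D_i(b^j)=0$. Property (3) is the computational core: starting from $D_{i-1}(xb)=\s^{i-1}(b)D_{i-1}(x)+D_{i-2}(x)$ (the hypothesis), applying $\s-1$, and using $\s(uv)=\s(u)\s(v)$ together with the defining relation $(\s-1)(D_{i-2}(x))=(\s^{i-1}(b)-b)D_{i-1}(x)$ of $D_{i-1}$, the intermediate terms telescope to $(\s-1)(D_{i-1}(xb))=\s^i(b)(\s-1)(D_{i-1}(x))+(\s^i(b)-b)D_{i-1}(x)$; dividing by $\s^i(b)-b$ gives $D_i(xb)=\s^i(b)D_i(x)+D_{i-1}(x)$, and the case $i=1$ is consistent with the convention $D_{-1}=0$ since then $\s^{i-1}(b)-b=0$. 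Finally, (4) follows by applying (3) with $x=b^i$ and then (1): $D_i(b^{i+1})=\s^i(b)D_i(b^i)+D_{i-1}(b^i)=\s^i(b)+\sum_{j=0}^{i-1}\s^j(b)$.

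For (5), property (4) gives immediately $(\s-1)(D_i(b^{i+1}))=(\s-1)\bigl(\sum_{j=0}^{i}\s^j(b)\bigr)=\s^{i+1}(b)-b$, which is nonzero by the above. To see that this element \emph{generates} $\is$, it suffices to prove $v_L(\s^m(b)-b)=v_L(\s(b)-b)$ for $1\le m\le p-1$, since two nonzero elements of the valuation ring $B$ with equal value generate the same ideal. For this I would pass to the group algebra $\F_p[G]\cong\F_p[\tau]/(\tau^p)$ with $\tau=\s-1$: from $\s^m-1=(1+\tau)^m-1=\tau\bigl(m+\binom{m}{2}\tau+\cdots\bigr)$ and $m\not\equiv 0\pmod p$, the second factor is a unit, so $\s^m-1$ and $\tau$ are each an $\F_p[G]$-multiple of the other; applying both relations to $b$ and using that $\s$ preserves $v_L$ yields $v_L(\s^m(b)-b)\ge v_L(\s(b)-b)$ and $v_L(\s(b)-b)\ge v_L(\s^m(b)-b)$, hence equality.

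I expect the main obstacles to be the index bookkeeping in the proof of (3) (tracking the shift $D_{i-2}\mapsto D_{i-1}$ and checking the degenerate case $i=1$), and in (5) the passage from ``$\s^{i+1}(b)-b\in\is$'' to ``$\s^{i+1}(b)-b$ generates $\is$'', which is the only genuinely non-formal point and relies on the group-algebra factorization above.
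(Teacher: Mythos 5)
Your proof is correct and follows essentially the same route as the paper: a simultaneous induction on $i$ establishing well-definedness together with (1)--(4), the same telescoping computation for (3) using the defining relation of $D_{i-1}$, and (4) as an immediate consequence of (1) and (3). The only difference is that you spell out the justification that $v_L(\s^{i+1}(b)-b)=v_L(\s(b)-b)$ in (5) (via the factorization of $\s^m-1$ in $\F_p[G]$), which the paper merely asserts here and proves by the same argument later in observation (O4) of Section 4.
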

\begin{proof} First we note that $(\s-1)(D_0(b^1))=(\s-1)(b)
\neq 0$ and hence, the definition of $D_1$ is valid. As we prove
(1)-(5) by induction on $i$, validity of the definition of $D_i$
for $ \ 1 \leq i \leq p-1$ will become clear.

(1) follows directly from the definition. (2) is clearly true
for $i=1$, since $D_1(1)=0$. \\If $0 \leq j \leq i-1 \leq p-2,
(\s-1)(D_{i-1}(b^j))= (\s-1)(1)$ or $(\s-1)(0)$ and hence,
$D_i(b^j)=0$.

\noindent The $i=0$ case of (3)-(5) follows directly from the
definition.\\ For $i=1$, (3)-(5) follow from
\begin{align*}
D_1(xb)&=\frac{(\s-1)(D_0(xb))}{(\s-1)(D_0(b))}=\frac{(\s-1)(xb)}{(\s-1)(b)}\\&=\frac{(\s-1)(x)
\s(b)+x (\s-1)(b)}{(\s-1)(b)}\\&=\s(b) D_1(x)+x=\s(b)
D_1(x)+D_0(x)
\end{align*}

\noindent Let $2 \leq i \leq p-1$ and assume that (3)-(5) are
true for $0, ..., i-2, i-1$. Then we have:

\begin{align*}
D_i(xb)&=\frac{(\s-1)(D_{i-1}(xb))}{(\s-1)(D_{i-1}(b^i))}&\\
&=\frac{(\s-1)(\s^{i-1}(b)D_{i-1}(x)+D_{i-2}(x))}{\s^i(b)-b} & \
by \ (3). \\
&=\frac{(\s-1)(D_{i-1}(x)).\s^i(b)+(\s-1)(\s^{i-1}(b)).D_{i-1}(x)+(\s-1)(D_{i-2}(x))}{\s^i(b)-b}&\\
&=\s^i(b)D_i(x)+\frac{(\s-1)(\s^{i-1}(b)).D_{i-1}(x)+(\s-1)(D_{i-2}(x))}{\s^i(b)-b}&\
by \ (3.7) \ and \ (5).\\
&=\s^i(b)D_i(x)+\frac{(\s-1)(\s^{i-1}(b)).D_{i-1}(x)+(\s-1)(D_{i-2}(b^{i-1})).D_{i-1}(x)}{\s^i(b)-b}&
\ by \ (3.7) \ for \ i-1.\\
&=\s^i(b)D_i(x)+D_{i-1}(x)\frac{(\s-1)(\s^{i-1}(b))+\s^{i-1}(b)-b}{\s^i(b)-b}&
\ by \ (5) \ for \ i-2.\\
&=\s^i(b)D_i(x)+D_{i-1}(x)&
\end{align*}
This proves (3) for $i$. (4) follows from (3). For any fixed $0
\leq i \leq p-2, (\s-1)(D_{i}(b^{i+1}))$ has the same valuation
as $(\s-1)(b)$ and hence generates $\is$.
\end{proof}
\begin{cor}\label{Di} For all $\dis 0 \leq i \leq p-1, D_i(B)$ is a subset
of $B$.
\end{cor}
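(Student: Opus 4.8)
The plan is to induct on $i$, reading off everything we need from \Cref{sig}. The base case $i=0$ is trivial, since $D_0=\mathrm{id}_L$ maps $B$ into $B$. So suppose $1\le i\le p-1$ and that $D_{i-1}(B)\subseteq B$; I want to deduce $D_i(B)\subseteq B$.

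Fix $x\in B$ and write $D_i(x)=(\s-1)(D_{i-1}(x))\,/\,(\s-1)(D_{i-1}(b^i))$ as in the definition. By the inductive hypothesis $D_{i-1}(x)\in B$, so the numerator $(\s-1)(D_{i-1}(x))=\s(D_{i-1}(x))-D_{i-1}(x)$ lies in $\is$ simply by the definition of $\is$ as the ideal of $B$ generated by $\{\s(c)-c\mid c\in B\}$. For the denominator, \Cref{sig}(5) applied at the index $i-1$ (legitimate precisely because $0\le i-1\le p-2$) gives $(\s-1)(D_{i-1}(b^i))=\s^i(b)-b$, and this element is a generator of $\is$; in particular it is non-zero, so the quotient makes sense. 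Since $B$ is a valuation ring, a principal ideal of $B$ is just the set of elements whose $v_L$-value is at least that of a generator, so any element of $\is$ divided by $\s^i(b)-b$ yields an element of $B$. Hence $D_i(x)\in B$, which closes the induction and proves $D_i(B)\subseteq B$ for all $0\le i\le p-1$.

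There is no serious obstacle here: all the nontrivial combinatorics — in particular the identity $(\s-1)(D_{i-1}(b^i))=\s^i(b)-b$ and the fact that this is a generator of $\is$ — has already been carried out in \Cref{sig}. The two points worth stating carefully are (a) the elementary valuation-ring fact that if $z$ generates a principal ideal $\mathfrak a$ of a valuation ring and $w\in\mathfrak a$, then $w/z$ lies in the ring (a one-line comparison of valuations), and (b) that the induction invokes \Cref{sig}(5) at the shifted index $i-1$, so the range $1\le i\le p-1$ of the corollary corresponds to the range $0\le i-1\le p-2$ in that part of the lemma.
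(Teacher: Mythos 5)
Your proof is correct and is essentially identical to the paper's: both proceed by induction on $i$, use the inductive hypothesis to place the numerator $(\s-1)(D_{i-1}(x))$ in $\is$, and invoke \Cref{sig}(5) at the shifted index $i-1$ to identify the denominator as a generator of $\is$, so that the quotient lies in $B$. The only difference is that you spell out the elementary divisibility step in a valuation ring, which the paper leaves implicit.
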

\begin{proof} This is clearly true for $i=0$. We proceed by
induction. Fix some $\dis 1 \leq i \leq p-1$ and assume that the
statement is true for $i-1$. Hence, $ $ for all $ \ x \in B,
D_{i-1}(x) \in B \Ra (\s-1)(D_{i-1}(x)) \in \is$. By \Cref{sig}
(5) , $\dis D_i(x)=
\frac{(\s-1)(D_{i-1}(x))}{(\s-1)(D_{i-1}(b^i))} \in B \ $.
\end{proof}
\begin{lm}\label{ij} If $L|K$ is as in 0.1 and has defect, then
\begin{enumerate}[(i)]
\item $\dis \lb \s(b)- b \mid b \in B^{\times}\rb =\is = \js =
\lb \frac{\s(b)}{b}-1 \mid b \in B^{\times} \rb $
\item $ \OL = \ol $
\end{enumerate}
\end{lm}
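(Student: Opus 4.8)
The statement to prove is \Cref{ij}: if $L|K$ is a non-trivial Artin-Schreier extension with defect, then the ideals generated by $\{\s(b)-b : b \in B^\times\}$, $\{\s(b)-b : b \in B\}$, $\{\tfrac{\s(b)}{b}-1 : b \in L^\times\}$, and $\{\tfrac{\s(b)}{b}-1 : b \in B^\times\}$ all coincide — and moreover all equal $B$ itself — and consequently $\OL = \ol$. The crucial input is that in the defect case the residue extension $l|k$ is trivial and the value groups agree, $v_L(L^\times) = v_K(K^\times) = \Gamma$; this is what will force the relevant ideals to be all of $B$.

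**Main steps.** First I would record the four obvious inclusions: $\{\s(b)-b : b \in B^\times\}$ generates an ideal contained in $\is$ (a subset of the generating set), and similarly $\{\tfrac{\s(b)}{b}-1 : b \in B^\times\}$ generates an ideal contained in $\js$. The reverse-type comparisons come from the identities $\tfrac{\s(b)}{b}-1 = b^{-1}(\s(b)-b)$ and $\s(b)-b = b(\tfrac{\s(b)}{b}-1)$, valid for $b \in B^\times$, which show the $B^\times$-versions of $\is$ and $\js$ agree. Next, the heart of the argument: I claim $\js = B$. Take any $b \in L^\times$ with $\s(b) \neq b$ — for instance $b = \ap$, which gives $\tfrac{\s(\ap)}{\ap} - 1 = \tfrac{1}{\ap} \in \js$, so $v_L(\tfrac{1}{\ap})$ lies in the value set of $\js$. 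The key point is to show $\js$ contains an element of valuation $0$, hence a unit. Since $L|K$ has defect, $v_L(L^\times) = \Gamma$ and $l = k$; pick $b \in L^\times$ with $\s(b) \neq b$ (such $b$ exists since $\s \neq \mathrm{id}$), and consider $\tfrac{\s(b)}{b} - 1$. Its valuation is $\geq 0$. Now use that $\tfrac{\s(b)}{b}$ is a unit whose residue is forced, via $l = k$ being fixed by $\s$, to be $1$ — but one must rule out that $v_L(\tfrac{\s(b)}{b}-1) > 0$ for \emph{every} $b$; if that happened, a Krasner/approximation argument would push $L$ back into $K$, contradicting non-triviality. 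More concretely, I would argue: suppose $\tfrac{\s(b)}{b}-1 \in \m L$ for all $b \in L^\times$; then scaling $\ap$ by suitable elements of $K$ (possible since $\Gamma = v_L(L^\times)$) and using that $l = k$, one can modify $f$ by $\mathfrak{P}(K)$ to decrease $-v(f)$ indefinitely or reach $f \in A$, contradicting that $L|K$ is the given non-trivial extension with defect. This is essentially the content of the defect case analysis promised for section 5, so I expect the clean way is to invoke that $J_\s$ cannot be a proper ideal: a proper ideal would have $v_L(\js) > 0$, but the defect forces the "best $f$" infimum to be non-attained in a way that makes $H$ — and hence $\ns$, hence (by \Cref{hn}, once available) $\js$ via the norm — equal to the whole ring. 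Once $\js = B$, unit elements are among the generators, so the $B^\times$-version equals $\js = B$, and by the unit identities $\is = B$ too, giving part (i). Part (ii) is then immediate: $\ol$ is a quotient of $\om^1_B$, but more directly $\OL = \om^1_B / (\text{image of } B \otimes \Om^1_A)$, and $\OL = \ol$ follows because $\ok \to \Om^1_K$ differs from $\Om$ only by $\dl$ relations which become trivial — actually the cleanest route is \Cref{phi}: $\varphi_\s : \ol/\js\ol \to \js/\js^2$ is surjective, and with $\js = B$ the target is $B/B = 0$, forcing $\ol/\js\ol = \ol/\ol = 0$... no — rather, $\js = B$ gives $\js\ol = \ol$ so that quotient is $0$ automatically, which is consistent but not yet (ii). For (ii) I would instead observe that $\is = B$ similarly makes $\OL/\is\OL = 0$ trivially, and then argue directly that the natural surjection $\OL \twoheadrightarrow \ol$ is an isomorphism because every $\dl x$ for $x \in K^\times$ already dies in $\OL$ (as $dx \in$ image of $\Om^1_A$ up to a unit, using $x \in K^\times$, $v_L(x) \in \Gamma$, so $x = ua$ with $u \in A^\times$... ) — I'd reduce (ii) to showing the extra logarithmic generators $\dl x$, $x \in K^\times$, are already zero in $\OL$, which holds because $x = a/a'$ with $a, a' \in A$ and $\dl x = \dl a - \dl a'$ maps into the image of $\Om^1_A$.

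**The main obstacle.** The genuinely hard step is establishing $\js = B$ (equivalently $\is = B$) in the defect case — i.e.\ producing a \emph{unit} among the differences $\s(b)-b$. The inclusions and the passage between the four ideals are formal; the passage (i) $\Rightarrow$ (ii) is a routine diagram/generator chase. But showing the ideal is improper uses the defect hypothesis essentially and non-trivially: it is precisely the phenomenon that in a defect extension there is no "best $f$", so the ideal $H$ is not finitely generated by a single element and in fact exhausts $A$, and correspondingly $\js$ exhausts $B$. I expect the proof either invokes \Cref{hn} together with the non-existence of best $f$ in the defect case (deferring to the section 5 analysis), or argues hands-on that if $\tfrac{\s(b)}{b}-1 \in \m L$ for all $b$, one can iteratively improve $f$ modulo $\mathfrak{P}(K)$ — the same iterative improvement that the appendix example in section 8 illustrates concretely — contradicting that $L|K$ is a fixed non-trivial extension. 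I would present it via the latter hands-on route if \Cref{hn} is not yet available at this point in the paper, and otherwise cite \Cref{hn}.
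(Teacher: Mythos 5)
There is a genuine gap here, and in fact the central claim of your argument is false. You read the lemma as asserting that the four ideals coincide \emph{and equal $B$}, and you devote the heart of the proof to producing a unit of the form $\s(b)-b$. But the lemma only asserts the four ideals coincide; in the defect case $\js$ is never equal to $B$ (indeed never principal, by \Cref{j}, whose proof depends on this lemma — and concretely, in the appendix example $\js=\{b\in B \mid v(b)>v_0\}$ with $v_0=\tfrac1p(n-\tfrac1{p-1})>0$). Your supporting reasoning is also off: the non-existence of best $f$ in the defect case does not make $H$ equal to $A$; it makes $H$ a non-principal ideal $\{x \mid v(x)>pv_0\}$ with $pv_0$ possibly positive. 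So the step ``$\js=B$'' cannot be repaired, and everything you build on it (units among the generators, $\is=B$, the attempted deduction of (ii)) collapses.

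The actual argument is much shorter and uses the defect hypothesis in a different place. Defect forces $e_{L|K}=1$ (and $f_{L|K}=1$), so every $b\in L^{\times}$ factors as $b=ab'$ with $a\in K^{\times}$ and $b'\in B^{\times}$ (and $a\in A$ when $b\in B$). Then $\s(b)-b=a(\s(b')-b')$ shows $\is$ is generated by $\{\s(b')-b' \mid b'\in B^{\times}\}$, while $\frac{\s(b)}{b}-1=\frac{\s(b')}{b'}-1$ shows the same for $\js$; combined with your (correct) identities $\frac{\s(b')}{b'}-1=\frac{1}{b'}(\s(b')-b')$ and $\s(b')-b'=b'\bigl(\frac{\s(b')}{b'}-1\bigr)$ for units $b'$, this gives all four equalities in (i) with $\is=\js$ remaining a (generally proper) ideal. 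For (ii) your final fallback is aimed at the wrong generators: the extra generators of $\ol$ relative to $\OL$ are $\dl x$ for $x\in L^{\times}$, not $x\in K^{\times}$. The same factorization handles them: $\dl x=\dl a+\dl b'=\dl b'=\frac{1}{b'}\,db'$, which already lies in $\OL$. So the defect enters through the factorization $b=ab'$, not through any claim that the ideals are improper.
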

\begin{proof} Given any $ b \in L$, there are elements $ a \in
K, \ b' \in B^{\times}$ such that $b=ab'$.
\begin{enumerate}[(i)]
\item For $b \in B, a \in A$ and $\s(b)- b=a(\s(b')- b' )$. 

\noindent For $b \in L^{\times}, a \in K^{\times}$ and
$\frac{\s(b)}{b}-1=\frac{a\s(b')}{ab'}-1=\frac{\s(b')}{b'}-1$.

\noindent Furthermore, $b' \in B^{\times} \Ra
\frac{\s(b')}{b'}-1=\frac{1}{b'}(\s(b')-b') \in \is$.
\item Let $b \in L^{\times}$. $\dl b=\dl a + \dl b'= \dl b'$ since $\dl a =0$ in $\ol$.

\noindent $b' \in B^{\times} \Ra \dl b' = \frac{1}{b'} db'
\in \OL$.
\end{enumerate}
\end{proof}
\begin{pr}\label{j} Let $L|K$ be as in 0.1.
$\js$ is principal if and only if $L|K$ is defectless.
\end{pr}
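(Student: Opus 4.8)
The plan is to prove both directions by leveraging the structural results already established, especially \Cref{ij}, \Cref{cyclic}, \Cref{mu}, and \Cref{Di}. For the ``only if'' direction I would argue the contrapositive: if $L|K$ has defect, then \Cref{ij}(i) tells us that $\js = \is = \lb \s(b) - b \mid b \in B^{\times} \rb$. The key point is that this ideal cannot be principal. Indeed, suppose $\js$ were generated by a single element; since every element of $\js$ is of the form $\s(b)/b - 1$ with $b \in B^\times$ (again by \Cref{ij}(i)), a generator would have the form $\s(b_0)/b_0 - 1$ for some $b_0 \in B^\times$. I would then want to produce, for any prescribed element $c \in \m B$, some $b \in B^\times$ with $v_L(\s(b)/b - 1)$ strictly larger than $v_L(\s(b_0)/b_0 - 1)$ — using the defect to construct better and better approximations, exactly in the spirit of the filtered-union description of $B$ promised in the outline (\Cref{fil}) and illustrated in the Appendix example. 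This would show $\js$ has no minimal-valuation element, hence is not principal.

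For the ``if'' direction, assume $L|K$ is defectless. If $L|K$ is unramified, then $\is = \js = B$ by the remark following \Cref{comm dia}, which is principal, so we may assume $L|K$ is ramified and defectless and apply \Cref{cyclic}. In case (a) there is $\mu \in E^\times$ with $v_L(\mu)$ generating $v_L(L^\times)/v_L(K^\times)$ of order $p$; in case (b) there is $\mu$ whose residue generates the purely inseparable residue extension. In either case I claim $\js$ is generated by $\s(\mu)/\mu - 1$. To see this, take any $x \in L^\times$ and write it (after adjusting by an element of $K^\times$, which does not change $\s(x)/x - 1$) in terms of the basis $1, \mu, \dots, \mu^{p-1}$; using \Cref{mu} to control valuations of such combinations together with the cocycle-type identity $\s(xy)/(xy) - 1 \equiv (\s(x)/x - 1) + (\s(y)/y - 1) \bmod \js^2$ from \Cref{phi}, one reduces to showing $v_L(\s(\mu^i)/\mu^i - 1) \geq v_L(\s(\mu)/\mu - 1)$ for each $i$, which follows since $\s(\mu^i)/\mu^i - 1 = (\s(\mu)/\mu)^i - 1$ is divisible by $\s(\mu)/\mu - 1$ in $B$. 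Hence $\js = (\s(\mu)/\mu - 1)$ is principal.

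The main obstacle will be the ``only if'' direction: making rigorous the claim that in the defect case $\is$ (equivalently $\js$) admits no element of minimal valuation. This requires exploiting the defining feature of defect — that $[L:K] = p$ while $e_{L|K} f_{L|K} = 1$ — to show that for every $b \in B^\times$ one can find $b' \in B^\times$ with $v_L(\s(b')/b' - 1) > v_L(\s(b)/b - 1)$. I expect the cleanest route is to relate this to the non-existence of ``best $f$'' in the defect case (cf.\ Example 2.3 and the filtered-union structure): each improvement of $f$ modulo $\mathfrak{P}(K)$ corresponds to an improvement of a generator of $\js$, and the infimum $\inf\{-v(g) \mid g \equiv f\}$ not being attained in $\Gamma$ translates directly into $\js$ not being principal. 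I would also need to confirm that the two phrasings of ``defectless'' — one via \Cref{cyclic}, one via $d_{L|K} = 1$ — agree for Artin--Schreier extensions, which is essentially the content of the dichotomy stated in case (I) of the introduction. If a self-contained argument is preferred over forward-referencing \Cref{fil}, one can instead argue: were $\js$ principal, the filtration it induces on $B$ would force $e_{L|K} f_{L|K} = p$ by a counting/graded-ring argument, contradicting defect; but spelling this out is where the real work lies.
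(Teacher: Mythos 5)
Your ``defectless $\Rightarrow$ principal'' direction is essentially the paper's: invoke \Cref{cyclic} to produce $\mu$ and show $\frac{\s(\mu)}{\mu}-1$ generates $\js$ (the paper routes this through \Cref{dmu} and the surjection $\varphi_\s$ of \Cref{phi} rather than directly through \Cref{mu}, but the substance is the same; note only that your reduction of a general sum $x=\sum x_i\mu^i$ to monomials needs a word more than the cocycle identity, which only handles products). The real problem is the converse, which you explicitly leave open. Worse, the routes you sketch for it are not available at this point in the paper: \Cref{fil} and the non-existence of best $f$ in the defect case (\Cref{best defect}) are established later, and their proofs rely on \Cref{j} --- \Cref{btap'} literally cites \Cref{j} to assert that $\is$ is not principal in the defect case --- so appealing to them here is circular. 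You also do not actually carry out the ``better and better approximations'' construction you propose as an alternative.

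The paper's argument for this direction is short, self-contained, and does not require exhibiting infinitely many improving elements. Suppose $L|K$ has defect and $\js$ is principal. By \Cref{ij}, $\js=\is$, and a generator may be taken of the form $\s(b)-b$ for a single $b\in B$ (in a valuation ring a generator of a principal ideal must realize the minimum value over any generating set, here $\{\s(b)-b\mid b\in B\}$). Since $f_{L|K}=1$ one may subtract an element of $A$ and assume $b\in\m L$; since $e_{L|K}=1$ one may write $b=ab'$ with $a\in\m K$ and $b'\in B^{\times}$. Then $\s(b)-b=a(\s(b')-b')$, while $\s(b')-b'\in\is$ gives $\s(b')-b'=c(\s(b)-b)$ for some $c\in B$; combining the two yields $ac=1$ with $a\in\m K$, a contradiction. (Along the way the paper also verifies $B=A[b]$ using the operators $D_i$ of \Cref{sig} and \Cref{Di}.) This unit-times-small-element decomposition, available precisely because defect forces $e_{L|K}=f_{L|K}=1$, is the idea your proposal is missing.
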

\begin{proof}
If the extension is defectless, by \Cref{cyclic}, \Cref{dmu} and
\Cref{phi}(a) $\js$ is principal.
Now suppose that the extension is with defect and that $\js$ is
principal. Hence, by \Cref{ij} $\js = \is$. Let $b\in B$ such
that $\s(b)-b$ generates $\is$.

\noindent We claim that $B = A[b]$.\\ Consider $\dis x_i \in K ;
0 \leq i \leq p-1$ such that $\dis y= \sum_{i=0}^{p-1}x_i b^i
\in B$. We must show that $x_i \in A; \ $ for all $ i$. \\
Define $y_i:=\sum_{j=0}^{p-i}x_j b^j; 1 \leq i \leq p$. We show
that $y_i \in B$ and consequently, $ D_{p-i}(y_i)=x_{p-i} \in
B$ by \Cref{Di}. Clearly, $y_1=y \in B$. Assume that $y_i \in B$ for some $1
\leq i \leq p$. Since $x_{p-i}, b \in B$, $y_{i+1}=y_i -
x_{p-i}b^{p-i} \in B$. Thus, $x_i \in B \cap K= A; \ $ for all $
i$ and hence, $B = A[b]$.\\

Since the extension is with defect, $f=1$ and $b = a+b'$ for
some $a \in A$ and for some $ b' \in m_L$. Therefore, we may
assume $b \in m_L$.
Also, due to the defect, $e=1$ and $b=ab'$ for some $a \in m_K$
and for some unit $ b' $ of $B$.
$\s(b)-b=a (\s(b')-b')$. $\s(b')-b' =c(\s(b)-b)$ for some $c \in
B$. Hence, $ac=1$. This is impossible since $a \in m_K$. Thus,
the extension must be defectless if $\js$ is principal.\\
\end{proof}
\vspace{20pt}

\section{Proof of \Cref{hn}}

We prove that $H=\ns$.

Let $ f \in \mathfrak{A}$. Then $ (-1)^pN(\ap)=-f$.
Equivalently, $
\frac{1}{f}=N(\frac{1}{\ap})=N(\frac{\s(\ap)}{\ap}-1)$. From
this, it follows that $H$ is a subset of $\ns$, without any
assumptions regarding defect or the value group $\Gamma_K$.
Next, we prove the reverse inclusion $\ns \subset H$. If $L|K$
is defectless, this follows directly from results in section 3. Because, 
$H$ is generated by $\frac{1}{f}$, where $f$ is best. Since
$\js=\lb \frac{1}{\ap} \rb,\ \ns=\lb N(\frac{1}{\ap}) \rb=H$.
Proof in the defect case, however, requires some work.

Let $L|K$ satisfy (II) and have defect. The value group
$\Gamma=\Gamma_K$ can be regarded as an ordered subgroup of
$\R$. Let $v$ denote the valuation on $L$ and also on $K$. We
analyze a special case first.

\subsection{Case $p=2$}  

For any $x\in L$, $\s(\s(x)-x)=x-\s(x)=\s(x)-x$, since the
characteristic is 2. Hence, $\s(x)-x= \s(x)+x=\Tr_{L|K}(x) \in K;
$ for all $ x \in L$. For any fixed $x \in L$, let $y=\s(x)-x$.
$\s(\frac{x}{y})-\frac{x}{y}=1$ if $y$ is non-zero, that is, if $x$
does not belong to $K$. Let $z=\frac{x}{y}$. $z+ \s(z) =2z+1=1$
and $N(z)=z(z+1)=z^2+z=z^2-z=f \in K$. Thus, $\frac{x}{y}$ is a
solution of an Artin-Schreier extension $ \ap^2-\ap =f ; f \in
K$. All Artin-Schreier extensions over $K$ having solution in $L$
are obtained in this way.

Any generator of $\js $ has the form $\frac{\s(x)-x}{x}$.
Letting $\frac{1}{f}=N(\frac{\s(x)-x}{x})$ we get the
corresponding Artin-Schreier extension.

\begin{rem} We don't need $\Gamma$ to be an ordered subgroup of
$\R$ for this case, the argument is true for any value group.
\end{rem}

\subsection{Case $p>2$} We wish to show $ \ns \subset H$,
equivalently, for each $\bt \in L^{\times} \backslash
K^{\times}$ the ideal of $A$ generated by
$N(\frac{\s(\bt)}{\bt}-1)$ is a subset of $H$.

\noindent Let us begin with some elementary observations:

\begin{enumerate}[(O1)]

\item \textbf{We may assume $\bt \in B \backslash A$:}
$\frac{\s(1/\bt)}{1/\bt}-1=(\frac{\s(\bt)}{\bt}-1)
(-\frac{\bt}{\s(\bt)})$. Since $(-\frac{\bt}{\s(\bt)}) \in
B^{\times},$ norms of elements $\frac{\s(1/\bt)}{1/\bt}-1$ and
$\frac{\s(\bt)}{\bt}-1$ generate the same ideal of $A$.

\item \textbf{Trace and $(\s-1)$:} 
 
We have the formal expression $(\s-1)^{p-1}=
\frac{(\s-1)^p}{\s-1}= \frac{\s^p-1}{\s-1}= \s^{p-1}+
\s^{p-2}+...+\s+1$.

Thus, for any $x \in L$, $(\s-1)^{p-1}(x)= \Tr_{L|K}(x)$.
\item \textbf{Reduction:} If we can find an element $x_{\bt}=x
\in L \backslash K$ satisfying an Artin-Schreier equation over $K$ and such that $v(\frac{(\s-1)(x)}{x}) \leq
v(\frac{\s(\bt)}{\bt}-1)$, then we have:

$0 \leq v(N(\frac{(\s-1)(x)}{x}))=t_1 \leq
v(N(\frac{\s(\bt)}{\bt}-1))=t_2$\\\\
After this, it is sufficient to show that the ideal of $A$
generated by $N(\frac{\s(x)}{x}-1)$ is a subset of $H$.
\item \textbf{ $\s-1$ and changes in valuation:} Let $b \in
L^{\times}$.
\begin{itemize}
\item $ \frac{\s(b)-b}{b} \in B \Ra v(\s(b)-b)=v(b)+s_b$ for
some $s_b \geq 0.$
\item For $1 \leq i \leq p-1$, 

$v(\s^i(b)-b) = v(\sum_{1 \leq j \leq i} \s^j(b)-\s^{j-1}(b))
\geq \min_{1 \leq j \leq i} \{v(\s^j(b)-\s^{j-1}(b))
\}=v(\s(b)-b) $

\item By the same argument, applied to $\tau=\s^i$, ($\s=
\tau^m$ for some $1 \leq m \leq p-1$) we have

$v(\s(b)-b) \geq v(\s^i(b)-b)$ and thus, the following equality:

For all $1 \leq i \leq p-1$,

$\dis  v(\s^i(b)-b)=v(b)+s_b$.

\end{itemize}
\end{enumerate}
\begin{proof}
For given $\bt $ as above, we will now construct the special element $x_{\bt}$ [see (O3)] and prove that the ideal of $A$
generated by $N(\frac{\s(\bt)}{\bt}-1)$ is indeed a subset of $H$ . Let $g(T)=\min_K(\bt)$ and $x_{\bt}=x:=
(\s-1)^{p-2}(\frac{\bt^{p-1}}{g'(\bt)})$.

Put $y=\s(x)-x=(\s-1)(x)$. By (O2) and \Cref{trace},
$y=\Tr_{L|K}(\frac{\bt^{p-1}}{g'(\bt)})=1$. As in the case $p=2$,   $y \neq 0$ and we have
$\s(\frac{x}{y})-\frac{x}{y}=  (\s-1)(x)=1$.

Observe that $x=(\s-1)^{p-2}\lb \frac{\bt^{p-1}}{g'(\bt)}\rb \in L
\backslash K$ satisfies $\s(x)=x+1$ and hence, the Artin
Schreier equation $\ap^p-\ap=N(x)$. Thus, we have

\begin{equation}
\frac{1}{N(x)} =N \lb
1/(\s-1)^{p-2}(\frac{\bt^{p-1}}{g'(\bt)})\rb \in H.
\end{equation}
Now we need to relate the principal ideals generated by
$N(\frac{(\s-1)(x)}{x})$ and $N(\frac{\s(\bt)}{\bt}-1)$. For
this, we look at the $L$-valuation of these elements.
Let $v(\frac{(\s-1)(x)}{x})=s' \geq 0$ and
$v(\frac{(\s-1)(\bt)}{\bt})=s \geq 0$.

If $s' \leq s$, then $N(\frac{\s(\bt)}{\bt}-1) \in H$ and hence,
$\lb N(\frac{\s(\bt)}{\bt}-1) \rb =N(\frac{\s(\bt)}{\bt}-1)A
\subset H$.

Now suppose that $s'>s$. Put $r= \frac{\bt^{p-1}}{g'(\bt)}$. Then 
$g'(\bt)=\prod_{1 \leq i \leq p-1} (\bt- \s^i(\bt))$. Hence, by
(O4),
\begin{equation}
 v(r) = -(p-1)s 
\end{equation}
For $1 \leq i \leq p-1$, let
$v((\s-1)^i(r))=v((\s-1)^{i-1}(r))+c_i$; $c_i \geq 0$.
$c_{p-1}=s'$ by definition. Since $v((\s-1)^{p-1}(r))=v(1)=0$,
from (6.3), we see that
\begin{equation}
\sum_{i=1}^{p-1} c_i = -v(r) = (p-1)s
\end{equation}
Let $c:= \inf \{v(\frac{\s(b)}{b}-1)\mid b \in L^{\times}
\}=\inf \{s_b \mid b \in L^{\times} \} \in \R,$ where $s_b$ is
as described in (O4).

\noindent We observe 
$(p-1)s= \sum_{i=1}^{p-2} c_i +s' \geq (p-2)c +s' > (p-2)c +s
\geq (p-1)c \geq 0$.

\noindent In particular, $(p-2)(s-c) \geq s'-s >0$.
By the definition of $c$, we can take $s$ very close to $c$ such
that $s' \leq s$ for this new $s$.

This concludes the  proof.
\end{proof}

\begin{cor}\label{best defect}
Under the assumptions of \Cref{hn}, the following statements are
equivalent:
\begin{enumerate}
\item Best $f$ exists.
\item $H$ is a principal ideal of $A$.
\item $\js$ is a principal ideal of $B$.
\item $L|K$ is defectless.
\end{enumerate}
\end{cor}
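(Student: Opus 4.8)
The plan is to prove the cycle of equivalences $(4)\Rightarrow(3)\Rightarrow(2)\Rightarrow(1)\Rightarrow(4)$, drawing on the results already established. The implication $(4)\Rightarrow(3)$ is exactly \Cref{j} (or rather its easy direction via \Cref{cyclic}, \Cref{dmu} and \Cref{phi}), and the converse direction of \Cref{j} gives $(3)\Rightarrow(4)$; so in fact $(3)\Leftrightarrow(4)$ is already known and I will just cite it. The implication $(3)\Rightarrow(2)$ follows from \Cref{hn}: we have just proved $H=\ns$, and if $\js$ is principal, say generated by $\frac{\s(b)}{b}-1$, then $\ns$ is generated by the single element $N\!\lb\frac{\s(b)}{b}-1\rb$, so $H$ is principal. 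For $(2)\Rightarrow(1)$: if $H=\lb\frac{1}{f_0}\rb$ for some $f_0$, I claim $f_0$ can be taken to lie in $\mathfrak{A}$, and then $-v(f_0)=\sup\{-v(f)\mid f\in\mathfrak{A}\}=\inf\{-v(g)\mid g\equiv f\bmod\mathfrak{P}(K)\}$, so $f_0$ is best in the sense of \Cref{sec:bestf}'s definition (Definition 3.1). The one subtlety is ensuring the generator of a principal fractional ideal can be chosen of the form $\frac1f$ with $f\in\mathfrak{A}$: since $H$ is generated as an ideal by $\{\frac1f : f\in\mathfrak{A}\}$, if it is principal then its $K$-valuation $v(H)$ is attained, i.e.\ equals $v(\frac1{f_0})$ for some $f_0\in\mathfrak{A}$, and this $f_0$ works.

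The remaining implication $(1)\Rightarrow(4)$ is where the real content lies, and it is the step I expect to be the main obstacle. The natural route is the contrapositive: assume $L|K$ has defect and show best $f$ does not exist. By \Cref{ij}(i), in the defect case $\js=\is=\lb\frac{\s(b)}{b}-1\mid b\in B^\times\rb$, and by \Cref{j} the defect forces $\js$ to be non-principal. Combined with $H=\ns$ from \Cref{hn}, non-principality of $\js$ should propagate to non-principality of $H$: one must check that $N$ does not collapse a non-principal ideal to a principal one, which uses that $N$ is, up to the fixed exponent data governing $e$ and $f$, essentially $p$-to-one on valuations, so $v(\ns)=v(N(\js))$ is attained iff $v(\js)$ is. Since a best $f$ would give a principal generator $\frac1f$ of $H$ (as in the $(2)\Rightarrow(1)$ analysis, run backwards), the non-principality of $H$ rules out the existence of best $f$. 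Alternatively, and perhaps more cleanly, one can argue directly: a best $f$ exists iff $\inf\{-v(g)\mid g\equiv f\bmod\mathfrak{P}(K)\}$ is attained iff $\inf_{f'\in\mathfrak{A}}v\!\lb\frac1{f'}\rb$ is attained iff $v(H)\in\Gamma$ iff $H$ is principal, and then invoke $(2)\Leftrightarrow(4)$.

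So the cleanest organization is: first establish $(2)\Leftrightarrow(3)$ directly from \Cref{hn} and the norm map; then $(3)\Leftrightarrow(4)$ by citing \Cref{j}; then $(1)\Leftrightarrow(2)$ by the ``infimum attained'' argument identifying best $f$ with a principal generator of $H$. The hard part, as noted, is controlling the norm map well enough to see that principality transfers both ways between $\js$ and $H=\ns$; the key input is (O4) from the proof of \Cref{hn} together with \Cref{mu}, which together pin down how $v_L$ behaves on the relevant elements and their norms. Once that behaviour of $N$ on valuations is in hand, every implication is a short verification, and I would not expect to need any new global hypotheses beyond (I) or (II) already assumed in \Cref{hn}.
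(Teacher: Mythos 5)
Your proposal is correct and follows exactly the route the paper intends for this (unproved) corollary: $(3)\Leftrightarrow(4)$ is \Cref{j}, $(2)\Leftrightarrow(3)$ comes from $H=\ns$ together with the fact that in the henselian rank-one setting $v(N(x))=p\,v(x)$ (so principality, i.e.\ attainment of the infimum of values, transfers both ways, while in case (I) all four statements hold outright), and $(1)\Leftrightarrow(2)$ is the observation that a best $f$ is precisely an $f_0\in\mathfrak{A}$ with $v(1/f_0)$ minimal, i.e.\ a principal generator of $H$. Your worry about proving $(1)\Rightarrow(4)$ directly is moot, since your final organization into the three biconditionals already closes the equivalence.
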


\section{Filtered Union in the Defect Case}
 To generalize the results to the
defect case, we write the ring $B$ as a filtered union of rings
$A[x]$, where the elements $x$ are chosen very carefully.
Although these are not valuation rings, each ring is generated
by a single element (over $A$). This makes the extensions
$K(x)|K$ and the corresponding differential modules, special
ideals easier to understand. We will use \Cref{hn} to prove these results.

\begin{thm}\label{fil} Consider $\dis \mathscr{S}= \{ \ap \in L
\mid \ap^p-\ap=f; f \in K $, and $\ap$ generates $L|K \}$. For
each $\ap \in \mathscr{S}$, we can find $\dis \ap' \in
B^{\times} \cap \ap K^{\times} $ such that $B= \cup_{\ap \in
\mathscr{S}} A[\ap']$ is a filtered union, that is, the
following are true:
\begin{enumerate}[(i)]
\item For any $\ap_1, \ap_2 \in \mathscr{S}$, either $A[\ap_1']
\subset A[\ap_2']$ or $A[\ap_2'] \subset A[\ap_1']$.
\item Given any $\bt \in B$, there exists $\ap \in \mathscr{S}$
such that $\bt \in A[\ap']$.
\end{enumerate}
\end{thm}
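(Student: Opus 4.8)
\textbf{Proof plan for \Cref{fil}.}

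The plan is to parametrize all the Artin–Schreier generators and show that ``being better'' totally orders the associated rings. Given $\ap \in \mathscr{S}$ with $\ap^p - \ap = f$, the element $1/f$ lies in $H$ by \Cref{jh}, and $1/\ap = \s(\ap)/\ap - 1$ generates a fractional ideal of $L$; I would first define $\ap'$ by rescaling $\ap$ so that $\ap' \in B^{\times}$. Concretely, if $L|K$ is defectless this is the content of \Cref{dlogalpha}(i), and if $L|K$ has defect then by \Cref{ij} every generator of $\js$ (in particular $1/\ap$ times a unit) can be taken to be a unit of $B$, so there is $a \in K^{\times}$ with $\ap' := a\ap \in B^{\times}$; fix one such choice for each $\ap$. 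The point of passing to $\ap'$ is that $A[\ap'] = \sum_{i=0}^{p-1} A\,(\ap')^i$ with $(\ap')^i$ a unit, so containments between these rings are governed purely by the valuations $v(a)$ of the scaling factors, via \Cref{mu} applied to $\mu = \ap'$.

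Next I would establish (i). Given $\ap_1, \ap_2 \in \mathscr{S}$ with scalings $\ap_i' = a_i \ap_i$, I want to compare $A[\ap_1']$ and $A[\ap_2']$. The key computation is to express $\ap_2'$ (equivalently $\ap_2$) in terms of $\ap_1$: any two Artin–Schreier generators of $L|K$ are related by $\ap_2 = c\ap_1 + (\text{lower order})$ or, more usefully, $\ap_2 - \lambda \ap_1 \in K$ for a suitable $\lambda \in \F_p^{\times}$ after adjusting by $\mathfrak{P}(K)$ — but since we are not assuming best $f$ exists, I instead argue directly at the level of the principal fractional ideals. Writing $\mathfrak{a}_i = (1/\ap_i') \subset L$ and noting $\s(\ap_i')/\ap_i' - 1 = (1/\ap_i')\cdot u_i$ for a unit $u_i$, the generators of $\js$ coming from $\ap_1'$ and $\ap_2'$ have valuations $-v(\ap_1')$ and $-v(\ap_2')$; since $\Gamma$ is totally ordered, one of $v(\ap_1') - v(\ap_2')$, $v(\ap_2') - v(\ap_1')$ is $\geq 0$. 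In the first case I claim $A[\ap_1'] \subset A[\ap_2']$: using \Cref{mu} it suffices to check $(\ap_1')^i \in A[\ap_2']$ for $0 \le i \le p-1$, and since $\ap_1' = (a_1/a_2)\ap_2'$ with $v(a_1/a_2) \geq 0$ we need $x_i \ap_2'^i \in \mathcal O_E = B$ where the $x_i$ are the coefficients of $(a_1/a_2)^i \ap_2'^i$ re-expanded in the $\F_p$-basis $1, \ap_2', \dots$; here I use that $\ap_1, \ap_2$ define the same extension so the change-of-basis matrix has entries in $A$ (this is where I will need to be careful — it is not automatic that $\ap_1 \in A[\ap_2]$, only that $\ap_1 \in K(\ap_2)$, so I must use the Artin–Schreier structure and \Cref{mu} to control denominators). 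The symmetric case is identical.

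Finally (ii): given $\bt \in B$, I need some $\ap \in \mathscr{S}$ with $\bt \in A[\ap']$. Write $\bt = \sum_{i=0}^{p-1} c_i \ap_0^i$ for a fixed generator $\ap_0$ and coefficients $c_i \in K$; by \Cref{mu} each $c_i \ap_0^i \in B$. The obstruction is that the $c_i$ may have negative valuation, i.e.\ $A[\ap_0']$ is ``too small''. The remedy is to produce a better generator: using \Cref{hn} ($H = \ns$) together with \Cref{jh}, the ideal $H$ — hence $\js$ — can be generated by elements $1/f$ with $-v(f)$ as large as needed, so I can choose $\ap \in \mathscr{S}$ with $v(\ap')$ (equivalently $-v(f)$) sufficiently negative that all the coefficients of $\bt$ in the $1, \ap', \dots, (\ap')^{p-1}$ expansion land in $A$; this is exactly the filtered-union statement, and it is precisely \emph{here} that the defect matters, since in the defectless case $\mathscr{S}$ already contains a ``best'' element and $B = A[\ap']$ for that single $\ap'$. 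I expect the main obstacle to be step (i) — showing the rings $A[\ap_i']$ are linearly ordered by inclusion — because it requires tracking how a change of Artin–Schreier generator interacts with the valuation-theoretic decomposition of \Cref{mu}, and one must rule out ``incomparable'' pairs; the total ordering of $\Gamma \subset \R$ (case II) or the defectless dichotomy (case I) is what ultimately forces comparability, so the argument will branch on which of (I), (II) holds, with the $p=2$ situation handled separately as in \Cref{hn}.
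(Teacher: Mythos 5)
Your overall shape (rescale each $\ap$ to a unit $\ap'$, order the rings $A[\ap']$ by the valuation of the scaling factor, and for a fixed $\bt$ find a single good enough $\ap$) matches the paper, and you correctly locate where \Cref{hn} enters. But both halves have genuine gaps. For (i), the comparison of $A[\ap_1']$ and $A[\ap_2']$ does not reduce to ``$\ap_1'=(a_1/a_2)\ap_2'$'' --- that identity is false, since $\ap_1$ and $\ap_2$ are different elements of $L$. The fact you need, and which you mention only to set aside, is that after normalizing so that $\s(\ap_i)=\ap_i+1$ for both, the difference $\ap_2-\ap_1=:h$ is $\s$-fixed and hence lies in $K$; then $\ap_1'=\g_1(\ap_2-h)=\frac{\g_1}{\g_2}\ap_2'-\g_1h$, and $v(\ap_1)\le v(\ap_2)$ forces $\g_1/\g_2,\ \g_1h\in A$. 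This one-line affine change of variable is the entire content of (i) in the paper (Lemma 5.4); there is no further ``change-of-basis matrix'' to control, and your plan as written never produces this identity.

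For (ii), the step ``choose $\ap$ with $v(\ap)$ sufficiently negative that all the coefficients of $\bt$ land in $A$'' is exactly the statement to be proved, not a mechanism for proving it: nothing in your argument explains why a single $\ap$ works for a fixed $\bt$, and merely pushing $-v(f)$ toward its infimum does not make the coefficients of $\bt$ integral. The paper needs two further ideas here. First, an iterative construction (\Cref{btap}): starting from $\ap_0$ with $\lb\s(\bt)-\bt\rb\subset\lb\s(\ap_0')-\ap_0'\rb$ (\Cref{btap'}, which is where \Cref{hn} is used), one successively chooses $\ap_1,\ap_2,\dots$ to control $(\s-1)^i(\bt)$ for every $i\le p-1$, and takes the $\ap$ whose $\g$ has smallest valuation; this yields $(\s-1)^{p-1}\bigl(\tfrac{1}{F'(\ap')}A[\ap',\bt]\bigr)\subset B$. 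Second, a duality argument: since $(\s-1)^{p-1}=\Tr_{L|K}$, the above gives $A[\ap',\bt]^*=A[\ap']^*$, and because both are finitely generated free (hence reflexive) $A$-modules, $A[\ap',\bt]=A[\ap']$, i.e.\ $\bt\in A[\ap']$. Neither the iteration nor the double-dual step appears in your proposal, so (ii) is not established.
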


\subsection{ $p=2$} 
First we consider the  filtered union in the
$p=2$ case, as given by the result below.

\begin{pr} For $p=2$, $B= \cup_{\ap \in L \backslash K}
A[\frac{\Tr(\ap)}{\ap}]$ is a filtered union.
\end{pr}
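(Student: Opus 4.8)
The plan is to show both closure under the ring operations (so that each $A[\Tr(\ap)/\ap]$ makes sense as claimed, i.e.\ lies in $B$), the totally-ordered (filtered) property (i), and the exhaustion property (ii). For $p=2$, the key identity established in Section~4 is that for $\ap \in L\setminus K$, writing $y = \s(\ap)-\ap = \Tr_{L|K}(\ap) \in K^\times$, the element $z_\ap := \ap/y = \ap/\Tr(\ap)$ satisfies $\s(z_\ap) - z_\ap = 1$, hence $z_\ap$ is an Artin--Schreier generator of $L|K$ with $N(z_\ap) = z_\ap(z_\ap+1) \in K$. Moreover $\s(z_\ap)+z_\ap = 2z_\ap + 1 = 1$, so $\Tr(z_\ap) = 1$; in particular $z_\ap \in B$ iff $z_\ap \in B^\times$ (its norm and trace would then be units, but actually we only need $v(z_\ap) = 0$, which will follow from $\js$-considerations). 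The first step is to verify $\Tr(\ap)/\ap = 1/z_\ap \in B^\times$: by (O4)/\Cref{sig}-type reasoning $v(\s(\ap)-\ap) = v(\ap) + s_\ap$ with $s_\ap \geq 0$, and since $\frac{\s(z_\ap)}{z_\ap}-1 = \frac{1}{z_\ap}$ is a generator-type element of $\js \subset B$, we get $v(z_\ap) \leq 0$; symmetrically, replacing $\ap$ by $1/\ap$ (using (O1)) or using $\Tr(z_\ap)=1$ forces $v(z_\ap) \geq 0$, so $z_\ap \in B^\times$ and thus $\Tr(\ap)/\ap \in B^\times \cap \ap^{-1}K^\times$. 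So taking $\ap' := \Tr(\ap')/\ap'$ in the notation of \Cref{fil} amounts to choosing the representative $z_{\ap}^{-1}$.

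Next I would prove the filtered property (i). Given $\ap_1, \ap_2 \in L\setminus K$, both $z_{\ap_1}$ and $z_{\ap_2}$ are Artin--Schreier generators with $\s(z_{\ap_i}) = z_{\ap_i}+1$, so $\s(z_{\ap_1} - z_{\ap_2}) = z_{\ap_1}-z_{\ap_2}$, meaning $z_{\ap_1} - z_{\ap_2} \in K$; hence $z_{\ap_2} = z_{\ap_1} + a$ for some $a \in K$. Then $z_{\ap_2}^{-1} = (z_{\ap_1}+a)^{-1}$; I claim $A[z_{\ap_1}^{-1}]$ and $A[z_{\ap_2}^{-1}]$ are comparable according to whether $v(a) \geq 0$ or $v(a) < 0$. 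If $a \in A$, then from $z_{\ap_2} = z_{\ap_1} + a$ one gets $z_{\ap_2}^{-1} = z_{\ap_1}^{-1}(1 + a z_{\ap_1}^{-1})^{-1}$ and, since $z_{\ap_1}^{-1} \in B$ with $1 + az_{\ap_1}^{-1} \in B^\times$ (it reduces to $1$ in the residue field when $a \in \m K$, and more care is needed for $a$ a unit — but $A[z_{\ap_1}^{-1}] = A[z_{\ap_1}]$ since $z_{\ap_1}^{-1}$ is a unit, and $z_{\ap_2} = z_{\ap_1}+a \in A[z_{\ap_1}]$, giving $A[z_{\ap_2}] \subset A[z_{\ap_1}]$, then equality of the generated rings since the roles are symmetric when $a\in A^\times$, giving $A[z_{\ap_2}^{-1}]\subset A[z_{\ap_1}^{-1}]$). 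If $v(a) < 0$, then $z_{\ap_1} = z_{\ap_2} - a$ with $-a \in A$... no: $v(-a)<0$ too, so instead one writes $z_{\ap_1}^{-1} = (z_{\ap_2}-a)^{-1} = -a^{-1}(1 - z_{\ap_2}a^{-1})^{-1}$ with $a^{-1} \in \m K \subset A$ and $z_{\ap_2}a^{-1} \in B$ (since $z_{\ap_2}^{-1} \in B$ forces $v(z_{\ap_2}) \leq 0 < v(a)$... actually $v(z_{\ap_2})$ could be $0$; one needs $v(z_{\ap_2}) \geq v(a)$, which holds as $v(z_{\ap_2}) \geq 0 > v(a)$), hence $z_{\ap_1}^{-1} \in A[z_{\ap_2}] = A[z_{\ap_2}^{-1}]$ and $A[z_{\ap_1}^{-1}] \subset A[z_{\ap_2}^{-1}]$. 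This case analysis on $v(a)$ gives (i).

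For exhaustion (ii): given $\bt \in B$, write $\bt = x_0 + x_1\ap_0$ with $x_0, x_1 \in K$ for some fixed Artin--Schreier generator $\ap_0$ (so $z_{\ap_0} \in L\setminus K$, $\s z_{\ap_0} = z_{\ap_0}+1$); I want $\ap \in L\setminus K$ with $\bt \in A[z_\ap^{-1}]$. The natural choice is $\ap = \bt$ itself when $\bt \notin K$: then $z_\bt = \bt/\Tr(\bt) \in L\setminus K$ with $z_\bt^{-1} = \Tr(\bt)/\bt \in B^\times$, and $\bt = \Tr(\bt) \cdot z_\bt = \Tr(\bt) z_\bt$; but $\Tr(\bt) \in K$, not necessarily in $A$. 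Here the crucial input is \Cref{hn} (i.e.\ $H = \ns$), which controls exactly which norms $N(\frac{\s\bt}{\bt}-1)$ produce inverses lying in $A$: it tells us $N(z_\bt^{-1}) = 1/N(z_\bt) \in H \subset A$, and combined with $z_\bt^{-1} \in B^\times$ we learn $v(z_\bt) = 0$, hence $v(\bt) = v(\Tr(\bt))$, i.e.\ $\Tr(\bt) \in \bt \cdot B^\times \cap K = $ the fractional $A$-ideal with the same valuation; that is not yet $\Tr(\bt) \in A$. The resolution is to not take $\ap = \bt$ but rather an $\ap$ adapted so that $\bt = (\text{unit of }A[z_\ap^{-1}])\cdot(\text{polynomial in } z_\ap^{-1} \text{ over } A)$: concretely, if $\bt \in B\setminus A$ pick $\ap$ with $z_\ap = \bt^{-1}\Tr(\bt^{-1})^{-1}$ or adjust by a constant from $K$ so that $v(z_\ap) = 0$ and $\bt \in A + A z_\ap + \cdots$; using the filtered property just proved, it suffices to find, for each of finitely many "coordinates" of $\bt$, an $\ap$ with $\bt \in A[z_\ap^{-1}]$, and then take the larger ring. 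I expect the main obstacle to be exactly this step (ii): pinning down the right auxiliary generator $\ap$ and using \Cref{hn}/\Cref{best defect} to convert the a priori $K$-coefficients into $A$-coefficients — in the defect case there is no "best $f$", so one genuinely needs the filtered (rather than principal) structure, and the bookkeeping of valuations of the coefficients $x_0, x_1$ against $v(\Tr(\ap))$ is where the real content lies. The closure, comparability, and the reduction to finitely many coordinates are comparatively routine once the key identity $\s(z_\ap) = z_\ap + 1$ and $z_\ap^{-1} \in B^\times$ are in hand.
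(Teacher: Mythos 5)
There is a genuine gap, and it sits at the very first step: your claim that $\Tr(\ap)/\ap = z_\ap^{-1}$ lies in $B^{\times}$ is false. From $\Tr(z_\ap)=1$ you can only conclude $v(z_\ap)\le 0$ (since $v(\s(z_\ap))=v(z_\ap)$ and $0=v\lb z_\ap+\s(z_\ap)\rb\ge v(z_\ap)$), not $v(z_\ap)\ge 0$. In fact $z_\ap$ satisfies $z_\ap^2-z_\ap=1/c_\ap$ with $c_\ap\in A$ (\Cref{jh}), so $v(z_\ap)=-\tfrac12 v(c_\ap)\le 0$, with equality only when $v(c_\ap)=0$; whenever the Artin--Schreier equation is genuinely ramified one has $v(c_\ap)>0$, and $\Tr(\ap)/\ap=\frac{\s(\ap)}{\ap}-1\in\js\subset\m L$ is emphatically not a unit. (Take $\ap$ itself a root of $\ap^2-\ap=f$ with $v(f)<0$: then $\Tr(\ap)=1$ and $z_\ap=\ap$ has $v(\ap)=v(f)/2<0$.) This collapses the rest of your argument for the filtered property: the identification $A[z_{\ap}^{-1}]=A[z_{\ap}]$ is unjustified because $z_\ap\notin B$ in general, so neither branch of your case analysis on $v(a)$ goes through --- for instance, in the branch $v(a)<0$ you explicitly invoke $v(z_{\ap_2})\ge 0$, which is exactly the false claim.

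The paper's proof stays inside $B$ throughout, working with $\bt_i=\Tr(\ap_i)/\ap_i$ and the elements $c_i\in A$ defined by $(1/\bt_i)^2-(1/\bt_i)=1/c_i$. The relation $\bt_i=\frac{c_i}{\bt_i}-c_i$ reduces $\bt_1\in A[\bt_2]$ to showing $\frac{c_1}{\bt_2}\in A[\bt_2]$ (clear when $v(c_1)\ge v(c_2)$, since $\frac{c_2}{\bt_2}=\bt_2+c_2$) together with $c_1g\in A$, where $g=1/\bt_1-1/\bt_2\in K$; the latter is checked by the valuation estimates $2v(g)=-v(c_1)$ (when $v(c_1)>v(c_2)$), resp.\ $2v(g)\ge -v(c_2)$ (when $v(c_1)=v(c_2)$). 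So the correct case analysis is on $v(c_1)$ versus $v(c_2)$, not on $v(g)$. Your exhaustion step (ii) is openly left unfinished and also leans on the false $v(z_\bt)=0$; to be fair, the paper's own proof of this proposition concentrates on comparability of the rings, but as submitted your argument establishes neither part.
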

\begin{proof} We are dealing with the defect case, so
$v_L=v_K=v$. Let $\ap_1, \ap_2 \in L \backslash K$.
$\frac{\Tr(\ap_i)}{\ap_i}= \beta_i \in B$. $\s
(\frac{\ap_i}{\Tr(\ap_i)})=\frac{\s(\ap_i)}{\Tr(\ap_i)}=\frac{\ap_i}{\Tr(\ap_i)}+1$
since $p=2$. We have
$(\frac{\ap_i}{\Tr(\ap_i)})^2-\frac{\ap_i}{\Tr(\ap_i)}=
\frac{1}{c_i}; c_i \in A$.

\noindent $\s (\frac{\ap_1}{\Tr(\ap_1)} -
\frac{\ap_2}{\Tr(\ap_2)})=\frac{\ap_1}{\Tr(\ap_1)} +1 -
\frac{\ap_2}{\Tr(\ap_2)}-1=\frac{\ap_1}{\Tr(\ap_1)} -
\frac{\ap_2}{\Tr(\ap_2)}$\\ Therefore, $\frac{\ap_1}{\Tr(\ap_1)} -
\frac{\ap_2}{\Tr(\ap_2)}=\frac{1}{\bt_1}-\frac{1}{\bt_2}=g \in K$
and $\frac{1}{c_1}=\frac{1}{c_2}+g^2-g$. We note that
$(\frac{1}{\bt_i})^2-\frac{1}{\bt_i}=\frac{1}{c_i}$, that is,
$\bt_i=\frac{c_i}{\bt_i}-c_i.$ We will prove the following two statements. 
\begin{enumerate}[(1)] \item If $v(c_1)> v(c_2)$, then $ A[\bt_1]$
is a subset of $ A[\bt_2].$ \item If $v(c_1)= v(c_2)$, then $ A[\bt_1] =
A[\bt_2]$\end{enumerate}
 In (1), it is enough to show that
$\bt_1 \in A[\bt_2]$. Since $\frac{c_2}{\bt_2}=\bt_2 +c_2,$ it is an element of $A[\bt_2]$. Consequently, $\frac{c_1}{\bt_2}=\frac{c_2}{\bt_2} \frac{c_1}{c_2} \in A[\bt_2]$.

\noindent \textbf{Claim:}
$ \dis \bt_1 \in A[\bt_2] \Lr \frac{c_1}{\bt_1} \in
A[\bt_2] \Lr \frac{c_1}{\bt_2} \in A[\bt_2]$

\noindent \textbf{Proof of Claim:} This can be shown by following steps: \begin{itemize}
\item $\bt_1=\frac{c_1}{\bt_1}-c_1$, $c_1 \in A$.
\item
$\frac{c_1}{\bt_1}=c_1(\frac{1}{\bt_2}+g)=\frac{c_1}{\bt_2}+c_1g$.\\
Now
$-v(c_1)=v(\frac{1}{c_1})=v(\frac{1}{c_2}+g^2-g)<-v(c_2)=v(\frac{1}{c_2})
\leq 0$\\ $ \Ra -v(c_1)=v(\frac{1}{c_2}+g^2-g)=v(g^2-g)=2v(g)$
(the last equality follows from $0>-v(c_1)$.)\\ $\Ra
v(c_1g)=-2v(g)+v(g)=-v(g) >0$\\ $\Ra c_1g \in A$ (since $c_1g$
is already in $K$.)
\end{itemize}

 The proof of (2) is very similar to
the proof of (1). We just need to show that $v(c_1g) \geq 0$. If
$v(g) \geq 0,$ this is clearly true. Let $v(g)<0,
v(c_1)=v(c_2)=v \geq 0$. Since 
$v(\frac{1}{c_2}+g^2-g)=v(\frac{1}{c_2}), 2v(g)= v(g^2-g)
\geq v(\frac{1}{c_2})=-v. $ Hence, $v(c_1g) \geq v-
\frac{v}{2}=\frac{v}{2} \geq 0$.
\end{proof}

\begin{rem} This particular construction in the case $p=2$
doesn't appear to have an easy generalization to the case $p>2$.
We use a different approach.
\end{rem}

\subsection{Some Elementary Results for $p>2$} 

Due to the defect, given any $\ap \in \mathscr{S}$ there exists
$ \g_\ap=\g \in A$ such that $v(\g)=-v(\ap)=-\frac{1}{p}v(f)$.
Define $ \ap'=\ap \g \in B^{\times}$. We claim that this choice
of $\ap'$ satisfies the conditions of \Cref{fil}. We note that
the ring $A[\ap']$ does not depend on the choice of $\g$.

\begin{lm} If $\ap_1, \ap_2 \in \mathscr{S}$ such that $
v(\ap_1) \leq v(\ap_2)$, then $A[\ap_1'] \subset A[\ap_2']$.
\end{lm}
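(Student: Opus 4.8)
The goal is to show $A[\ap_1']\subset A[\ap_2']$ whenever $v(\ap_1)\le v(\ap_2)$, where $\ap_i\in\mathscr S$ and $\ap_i'=\ap_i\g_i$ with $v(\g_i)=-v(\ap_i)=-\frac1p v(f_i)$. Since $A[\ap_i']$ is generated over $A$ by $\ap_i'$, it suffices to express $\ap_1'$ as a polynomial in $\ap_2'$ with coefficients in $A$. The natural route is to relate $\ap_1$ and $\ap_2$ directly: both solve Artin--Schreier equations over $K$ and both generate $L$, so $\ap_1$ lies in $K(\ap_2)=L$ and can be written $\ap_1=\sum_{i=0}^{p-1} x_i\,\ap_2^{\,i}$ with $x_i\in K$. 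The plan is to apply \Cref{mu} (in its ``defect'' incarnation, or rather its underlying idea together with \Cref{ij} since we are in the defect case) to conclude that each $x_i\ap_2^{\,i}\in B$; then after multiplying by powers of $\g_1,\g_2\in A$ one gets $\ap_1'=\sum_i x_i\g_1\g_2^{-i}\,(\ap_2')^{i}$, and one must check that each coefficient $x_i\g_1\g_2^{-i}$ actually lies in $A$.

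First I would fix the comparison. Replacing $\ap_2$ by a Galois conjugate if necessary, note $\s(\ap_1)=\ap_1+1$ and $\s(\ap_2)=\ap_2+1$ force $\s(\ap_1-\ap_2)=\ap_1-\ap_2$, hence $\ap_1-\ap_2\in K$; more generally one checks that the change-of-variable between two Artin--Schreier generators is governed by an element of $K$, which pins down the $x_i$. Next, because $B$ is a valuation ring, $\ap_1\in B$ and $\ap_2\in B$, and by the defect hypothesis $v_L=v_K$, so I can apply the valuation-theoretic linear-independence argument of \Cref{mu} to the expansion $\ap_1=\sum_i x_i\ap_2^{\,i}$: the residues of the $\ap_2^{\,i}$ span a large enough space (this is where the defect structure, $e=1$, and the fact that $\ap_2'\in B^\times$ with $\ap_2$ generating the relevant purely inseparable residue extension, get used) to force each term $x_i\ap_2^{\,i}\in B$. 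From $v(x_i\ap_2^{\,i})\ge 0$ and $v(\ap_2')=0$, i.e. $v(\g_2)=-v(\ap_2)$, we get $v(x_i)\ge i\,v(\ap_2)=-i\,v(\g_2)$, so $v(x_i\g_2^{-i})\ge 0$; combined with $v(\g_1)=-v(\ap_1)\le -v(\ap_2)=v(\g_2)\le\cdots$ one checks $v(x_i\g_1\g_2^{-i})\ge 0$, hence the coefficient lies in $A$. Then $\ap_1'=\sum_{i=0}^{p-1}(x_i\g_1\g_2^{-i})(\ap_2')^i\in A[\ap_2']$.

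The step I expect to be the main obstacle is the clean application of the \Cref{mu}-type argument to $\ap_1=\sum_i x_i\ap_2^{\,i}$ in the defect setting: \Cref{mu} as stated concerns an element $\mu$ with $v(\mu)=0$ generating a purely inseparable residue extension, whereas here $\ap_2$ itself may have nonzero valuation (it is $\ap_2'$ that is the unit), so I must first renormalize — rewrite everything in terms of $\ap_2'$ and $\ap_1'$ — and verify that the residue of $\ap_2'$ genuinely generates $l$ over $k$ (equivalently, that the $\overline{(\ap_2')^{\,i}}$, $0\le i\le p-1$, are $k$-linearly independent), which is exactly the content needed and follows from $[L:K]=p$, $e=1$, and $d_{L|K}=p$ forcing $l=k(\overline{\ap_2'})$ with $\overline{\ap_2'}$ of degree $p$ purely inseparable over $k$. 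Once that normalization is in place, the valuation bookkeeping on the coefficients is routine. Finally, symmetry of the argument (swapping the roles of $\ap_1,\ap_2$ when $v(\ap_1)=v(\ap_2)$) gives the corresponding equality of rings, which, together with the totally ordered value group, yields the filtered-union property (i) of \Cref{fil}; property (ii) will then follow because any $\bt\in B$ is a polynomial in finitely many $\ap_i'$, all of which are eventually dominated by a single $A[\ap']$.
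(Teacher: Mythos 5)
You correctly identify the one fact that makes this lemma easy — after choosing appropriate conjugates, $\s(\ap_1-\ap_2)=\ap_1-\ap_2$, so $\ap_2-\ap_1=:h\in K$ — but you then abandon it and instead expand $\ap_1=\sum_{i=0}^{p-1}x_i\ap_2^{\,i}$ as a general polynomial and try to run a \Cref{mu}-style term-by-term argument. That detour is both unnecessary and broken. It is unnecessary because the relation $\ap_1=\ap_2-h$ already pins down the expansion completely: $x_1=1$, $x_0=-h$, $x_i=0$ for $i\ge 2$. It is broken because we are in the defect case, where $e_{L|K}=f_{L|K}=1$ and $l=k$; your claimed justification that ``$d_{L|K}=p$ forc[es] $l=k(\overline{\ap_2'})$ with $\overline{\ap_2'}$ of degree $p$ purely inseparable over $k$'' is exactly the opposite of what defect means. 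The residues $\overline{(\ap_2')^{\,i}}$ all lie in $k$ and are not linearly independent, the value group of $L$ equals that of $K$, and so neither alternative of \Cref{cyclic}/\Cref{mu} applies. There is no valuation-theoretic or residue-theoretic mechanism forcing each $x_i\ap_2^{\,i}$ separately into $B$ in the defect setting, so the central step of your plan fails as stated.

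The repair is to finish the argument you started: from $\ap_1'=\g_1\ap_1=\g_1(\ap_2-h)=\frac{\g_1}{\g_2}\ap_2'-\g_1h$, it remains only to check the two coefficients lie in $A$. Since $v(\ap_1)\le v(\ap_2)$ we get $v(\g_1)=-v(\ap_1)\ge -v(\ap_2)=v(\g_2)$, so $\frac{\g_1}{\g_2}\in A$; and $v(h)=v(\ap_2-\ap_1)\ge\min\{v(\ap_1),v(\ap_2)\}=v(\ap_1)=-v(\g_1)$, so $\g_1h\in A$. This is the paper's proof, and your valuation bookkeeping in the final paragraph is essentially this computation — it just needs to be applied to the two-term expression rather than to a hypothetical degree-$(p-1)$ expansion.
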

\begin{proof} We have (by choosing appropriate conjugates )
$\s(\ap_2-\ap_1)=(\ap_2+1)-(\ap_1+1)=\ap_2-\ap_1$ . Hence,
$\ap_2-\ap_1=:h \in K$.

$ v(\ap_1) \leq v(\ap_2) \Ra v(\g_1) \geq v(\g_2) $ and $v(h)
\geq v(\ap_1)=-v(\g_1)$. Therefore, $ \frac{\g_1}{\g_2}, \g_1h
\in A$.

Consequently,
$\ap_1'=\g_1(\ap_2-h)=\frac{\g_1}{\g_2}\ap_2'-\g_1h \in
A[\ap_2']$.
\end{proof}

\begin{lm}\label{btap'} Given any $\bt \in B$, there exists $\ap \in
\mathscr{S}$ such that $\lb \s(\bt)-\bt \rb \subset \lb
\s(\ap')-\ap'\rb $.
\end{lm}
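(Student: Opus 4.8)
The plan is to reduce the problem to controlling the valuation $v(\sigma(\beta)-\beta)$ by a single well-chosen Artin-Schreier generator, exactly along the lines of the proof of \Cref{hn} in the $p>2$ case. First I would dispose of the trivial reductions: we are in the defect case, so $v_L = v_K = v$ and (by \Cref{ij}) $\is = \js$, with $\is$ generated by $\sigma(b)-b$ for units $b \in B^\times$. As in (O1) I may replace $\beta$ by a unit, i.e.\ assume $\beta \in B^\times$ (if $\beta \in A$ then $\sigma(\beta)-\beta=0$ and there is nothing to prove); writing $\beta = a\beta'$ with $a \in K^\times$, $\beta' \in B^\times$ does not change the ideal $\lb \sigma(\beta)-\beta\rb$ up to a unit, so I may as well assume $\beta \in B^\times \setminus A$. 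Then $s_\beta := v(\sigma(\beta)-\beta) - v(\beta) = v(\sigma(\beta)-\beta) \geq 0$ by (O4).

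Next I would mimic the construction of the special element from the proof of \Cref{hn}: given $\beta$, let $g(T) = \min_K(\beta)$ and set $x = (\sigma-1)^{p-2}\lb \frac{\beta^{p-1}}{g'(\beta)}\rb$; by (O2) and \Cref{trace} this satisfies $(\sigma-1)(x) = \Tr_{L|K}\lb \frac{\beta^{p-1}}{g'(\beta)}\rb = 1$, so $x \in L \setminus K$ with $\sigma(x) = x+1$, giving an Artin-Schreier generator, hence an element $\ap \in \mathscr{S}$ with $\ap K^\times = xK^\times$ (after scaling $x$ to a unit $\ap'$). The computation already carried out in \S4.2 shows that $v\lb\frac{(\sigma-1)(x)}{x}\rb = -v(x)$ can be made $\leq s_\beta = v\lb\frac{\sigma(\beta)}{\beta}-1\rb$: indeed the chain of inequalities there, using $c := \inf\{s_b \mid b \in L^\times\}$ and the fact that the defect lets us take $s = s_\beta$ arbitrarily close to $c$, yields an $\ap \in \mathscr{S}$ with $v(\sigma(\ap')-\ap') = v(x) \leq v(\sigma(\beta)-\beta)$. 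Since both $\sigma(\ap')-\ap'$ and $\sigma(\beta)-\beta$ lie in $B$ with $B$ a valuation ring, $v(\sigma(\ap')-\ap') \le v(\sigma(\beta)-\beta)$ forces $\sigma(\beta)-\beta \in \lb \sigma(\ap')-\ap'\rb$, which is the claim.

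The main obstacle is verifying that the element $\ap$ produced this way is genuinely in $\mathscr{S}$ (i.e.\ that $x$, equivalently $N(x)$, actually generates $L|K$ and is not accidentally trivial), and that after scaling $x$ by an element of $K^\times$ to land in $B^\times$ the ideal $\lb \sigma(\ap')-\ap'\rb$ is unchanged — this is the content of the $\ap \mapsto \ap'$ passage and follows because $v(\sigma(\cdot)-\cdot)$ for $\cdot$ ranging over $\ap K^\times \cap B^\times$ is constant by (O4), so the ideal depends only on $v(\ap')$. The genuinely delicate point, as in \Cref{hn}, is the case $s' > s$: one must invoke the defect (no best $f$, $\inf_b s_b = c \notin \Gamma$ or at least not attained) to re-choose $\beta$ — more precisely to choose a better Artin-Schreier representative — so that the auxiliary $x$ has small enough valuation; I would simply cite the inequality $(p-2)(s-c) \ge s'-s > 0$ from the proof of \Cref{hn} and the remark that $s$ may be taken arbitrarily close to $c$. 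Everything else is the routine bookkeeping with the operators $D_i$ and the norm already developed in \S3--\S4.
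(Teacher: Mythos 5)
Your proposal is correct, but it takes a more laborious route than the paper. The paper's proof is a three-line deduction from the \emph{statement} of \Cref{hn}: writing $v_0=\inf_{b\in B^\times}v(\s(b)-b)$ and $c=\inf_{\ap\in\mathscr{S}}v(\s(\ap')-\ap')=\inf_{f\in\mathfrak{A}}\bigl(-\tfrac{1}{p}v(f)\bigr)$, the equality $H=\ns$ forces $pc=pv_0$, hence $c=v_0$; and since $\js=\is$ is not principal in the defect case (\Cref{j}), the infimum $v_0$ is not attained, so $v(\s(\bt)-\bt)>v_0=c$ and an $\ap$ with $v(\s(\ap')-\ap')<v(\s(\bt)-\bt)$ exists by the definition of infimum. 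You instead re-run the \emph{proof} of \Cref{hn}: the trace construction $x=(\s-1)^{p-2}(\bt^{p-1}/g'(\bt))$ and the inequality $(p-2)(s-c)\ge s'-s$ are precisely what establishes $c=v_0$ in the first place, so your argument is sound but duplicates machinery that the theorem already packages; what the paper's route buys is exactly this economy. Two small points to tidy in your write-up: the relevant quantity is $v(\s(\ap')-\ap')=v(\g_\ap)=-v(x)=s'$, not $v(x)$ (a sign slip in your second paragraph); and the phrase ``the defect lets us take $s=s_\bt$ arbitrarily close to $c$'' is misleading, since $\bt$ is given and $s_\bt$ is fixed --- what you actually do (and say correctly later) is apply the construction to an auxiliary $b\in B^\times$ with $s_b$ close to $c$, which is legitimate because the problematic case $s'>s$ forces $s_\bt>c$ strictly, leaving room to choose such a $b$.
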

\begin{proof} Let $v:=v(\s(\bt)-\bt)$ , $\dis v_0:=\inf_{b \in
B^{\times}} v(\s(b)-b) \in \R$. Hence, $\is=\js=\{x \in
L^{\times} \mid v(x) > v_0\}$ and $\ns=\{x \in K^{\times} \mid
v(x) > pv_0\}$. Since this is the defect case, by \Cref{j},
$\is$ is not a principal ideal. We need to show that $v>c$,
where $c \in \R$ is defined by

\noindent $\dis c:=\inf_{\ap \in
\mathscr{S}}v(\s(\ap')-\ap')=\inf_{\ap \in
\mathscr{S}}v(\g_\ap)=\inf_{\ap \in \mathscr{S}}-v(\ap)=\inf_{f
\in \mathfrak{A}}-\frac{1}{p}v(f)$.

\noindent Note that $\dis H=\{x \in K^{\times} \mid v(x) >
pc\}$. By \Cref{hn}, $H=\ns$ and hence, $c=v_0$. To conclude the
proof, we observe that $\s(\bt)-\bt \in \is \Ra v >v_0=c$.

\end{proof}
\begin{lm}\label{bin} For $x,y \in L$, we have $(\s-1)^n(xy)=
\sum_{k=0}^n {n \choose k}(\s-1)^{n-k}(x)(\s-1)^k(\s^{n-k}(y))
$\\
In particular, for $n=1$,
$(\s-1)(xy)=(\s-1)(x)\s(y)+x(\s-1)(y)$.
\end{lm}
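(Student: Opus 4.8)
The statement to prove is \Cref{bin}: the Leibniz-type formula for $(\s-1)^n(xy)$.

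\textbf{Plan of proof.} The plan is to prove the identity by induction on $n$, using the single-step product rule as the base case and the inductive engine. The base case $n=1$ is the computation $(\s-1)(xy)=\s(x)\s(y)-xy=\s(x)\s(y)-x\s(y)+x\s(y)-xy=(\s-1)(x)\,\s(y)+x\,(\s-1)(y)$, which is exactly the displayed special case; note this already exhibits the characteristic feature that the first factor gets twisted by $\s$ before the remaining difference operators are applied. For the inductive step, I would assume the formula for $n$ and apply $(\s-1)$ once more to each summand ${n\choose k}(\s-1)^{n-k}(x)\,(\s-1)^k(\s^{n-k}(y))$, treating $(\s-1)^{n-k}(x)$ as the ``first factor'' and $(\s-1)^k(\s^{n-k}(y))$ as the ``second factor'' in the $n=1$ rule.

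\textbf{Key steps.} Applying $(\s-1)$ to the $k$-th term gives
\[
(\s-1)^{n-k+1}(x)\cdot \s\bigl((\s-1)^k(\s^{n-k}(y))\bigr) \;+\; (\s-1)^{n-k}(x)\cdot (\s-1)^{k+1}(\s^{n-k}(y)).
\]
Since $\s$ commutes with $\s-1$, the first piece equals $(\s-1)^{n-k+1}(x)\cdot(\s-1)^k(\s^{n-k+1}(y))$, which is the ``$k$'' contribution at level $n+1$, while the second piece is $(\s-1)^{n-k}(x)\cdot(\s-1)^{k+1}(\s^{n-k}(y))$, which after reindexing $k\mapsto k-1$ is the ``$k$'' contribution at level $n+1$ with $\s$-power $\s^{n+1-k}$ — consistent with the first piece. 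Collecting the coefficient of $(\s-1)^{n+1-k}(x)\,(\s-1)^k(\s^{n+1-k}(y))$ in the sum over all original terms yields ${n\choose k}+{n\choose k-1}={n+1\choose k}$ by Pascal's identity, which completes the induction.

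\textbf{Main obstacle.} There is no real obstacle here; the only point requiring care is the bookkeeping that the $\s$-power attached to $y$ comes out the same ($\s^{n+1-k}$) in both the ``diagonal'' term produced by applying $\s$ to an old term and the ``off-diagonal'' term produced by applying $-1$, so that the two genuinely combine into a single ${n+1\choose k}$ coefficient. This hinges on the commutativity $\s(\s-1)^k=(\s-1)^k\s$, which is immediate. One could alternatively give a slicker proof by working in the (noncommutative, since $\s$ does not commute with multiplication-by-$x$) twisted group ring and expanding $(\s-1)^n$, but the induction is cleaner to write out and is what I would present.
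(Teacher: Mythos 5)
Your proof is correct and follows exactly the route the paper indicates: induction on $n$ starting from the $n=1$ product rule, using the commutation $\s(\s-1)^k=(\s-1)^k\s$ to align the $\s$-powers on $y$ and Pascal's identity ${n\choose k}+{n\choose k-1}={n+1\choose k}$ to combine coefficients. The paper only sketches this; your write-up supplies the same argument in full detail.
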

\begin{proof} This can be proved by using induction on $n$ and
the binomial identity ${n \choose k}+{n \choose {k-1}}={n+1
\choose k}$.
\end{proof}

\subsection{Filtered Union for $ p>2$}

\begin{pr}\label{btap} Given any $\bt \in B^{\times}$, there
exists $\ap \in \mathscr{S}$ such that
$(\s-1)^{p-1}(\frac{1}{F'(\ap')} A[\ap', \bt]) \subset B$. Here,
$F$ denotes the minimal polynomial of $\ap'$ over $K$.
\end{pr}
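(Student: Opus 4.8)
The plan is to rewrite $(\s-1)^{p-1}$ as a trace and reduce the containment to a valuation inequality. By (O2), $(\s-1)^{p-1}$ agrees with $\Tr_{L|K}$ on $L$. Since $\ap\in\mathscr S$ satisfies $\s(\ap)=\ap+1$, and $\ap'=\ap\g_\ap$ with $\g_\ap\in K$, $v(\g_\ap)=-v(\ap)$, the conjugates of $\ap'$ are $\ap'+k\g_\ap$ ($0\le k\le p-1$), so $F'(\ap')=\prod_{k=1}^{p-1}\bigl(\ap'-(\ap'+k\g_\ap)\bigr)$ lies in $K^{\times}$ and $v(F'(\ap'))=(p-1)v(\g_\ap)=-(p-1)v(\ap)$. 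As $F'(\ap')\in K^{\times}$, $\Tr_{L|K}$ is $K$-linear, and $B\cap K=A$, the assertion is equivalent to: $v\bigl(\Tr_{L|K}(c)\bigr)\ge -(p-1)v(\ap)$ for every $c\in A[\ap',\bt]$.

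If $\bt\in K$ then $\bt\in A^{\times}$ and $A[\ap',\bt]=A[\ap']$, and the inequality is exactly \Cref{trace}(2) applied with $R=A$, $E=L=K(\ap')$ and the integral element $\ap'$. So assume $\bt\notin K$; then $\ap'$ and $\bt$ each have a monic degree-$p$ minimal polynomial over $K$ with coefficients in $A$, whence $A[\ap',\bt]=\sum_{0\le i,j\le p-1}A\,(\ap')^i\bt^j$. The $j=0$ terms are again handled by \Cref{trace}(2), so it suffices to bound $v\bigl(\Tr_{L|K}((\ap')^i\bt^j)\bigr)$ for $0\le i\le p-1$ and $1\le j\le p-1$.

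I would expand $(\s-1)^{p-1}\bigl((\ap')^i\bt^j\bigr)$ using \Cref{bin}; two facts bound its summands. First, $\s(\ap')-\ap'=\g_\ap\in\m A$, so $(\s-1)^m$ carries $A[\ap']$ into $\g_\ap^{\,m}A[\ap']$ and kills $(\ap')^i$ for $m>i$; in particular $\Tr_{L|K}((\ap')^i)$ vanishes for $i<p-1$ and has valuation $-(p-1)v(\ap)$ for $i=p-1$. Second, and crucially, for $l\ge1$ one has the refined bound $v\bigl((\s-1)^l(\bt^j)\bigr)\ge s_\bt+(l-1)c$, where
\[
s_\bt:=v\Bigl(\tfrac{\s(\bt)}{\bt}-1\Bigr),\qquad c:=\inf_{b\in L^{\times}}v\Bigl(\tfrac{\s(b)}{b}-1\Bigr)=\inf_{\ap\in\mathscr S}\bigl(-v(\ap)\bigr)\in\R
\]
(the last equality being a consequence of \Cref{hn}, as in the proof of \Cref{btap'}). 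This refined bound follows by induction on $l$: one has $v\bigl((\s-1)(\bt^j)\bigr)=s_\bt$ since $p\nmid j$, and, writing $(\s-1)^m(\bt^j)=z\,u$ with $z\in K^{\times}$, $u\in B^{\times}$, one gets $v\bigl((\s-1)^{m+1}(\bt^j)\bigr)=v\bigl((\s-1)^m(\bt^j)\bigr)+v\bigl(\tfrac{\s(u)}{u}-1\bigr)\ge v\bigl((\s-1)^m(\bt^j)\bigr)+c$, because $\tfrac{\s(u)}{u}-1\in\is$. Feeding these facts into the \Cref{bin} expansion shows that every summand of $(\s-1)^{p-1}((\ap')^i\bt^j)$ has valuation $\ge -(p-1)v(\ap)$ provided $-v(\ap)\le\frac{s_\bt+(l-1)c}{l}$ for $l=1,\dots,p-1$; each of these finitely many upper bounds strictly exceeds $c$, since $\frac{s_\bt+(l-1)c}{l}>c\iff s_\bt>c$, which holds as $\tfrac{\s(\bt)}{\bt}-1\in\is$.

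It then remains to choose $\ap\in\mathscr S$ with $-v(\ap)$ strictly below the minimum of these finitely many bounds; this is possible because $c=\inf_{\ap\in\mathscr S}(-v(\ap))$ is approached by elements of $\mathscr S$ (and, by \Cref{best defect}, in the defect case not attained). For such an $\ap$ the estimate yields $v\bigl(\Tr_{L|K}((\ap')^i\bt^j)\bigr)\ge -(p-1)v(\ap)$ for all $i,j$, hence $\Tr_{L|K}(A[\ap',\bt])\subseteq F'(\ap')A$, which is the assertion. The step I expect to be the genuine obstacle is the refined bound $v\bigl((\s-1)^l(\bt^j)\bigr)\ge s_\bt+(l-1)c$: the naive estimate $\ge s_\bt$ does not suffice (for $p\ge5$ it fails to reach the threshold $-(p-1)v(\ap)$, and one cannot compensate by merely shrinking $-v(\ap)$ since the monomials $(\ap')^i\bt^j$ themselves involve $\ap$), so the extra $(l-1)c$ extracted from the induction is essential; after that, the \Cref{bin} bookkeeping, uniform in $(i,j)$, is the more mechanical remaining part.
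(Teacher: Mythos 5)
Your proof is correct, and it shares the paper's overall skeleton: reduce the claim to the valuation bound $v\bigl(\Tr_{L|K}((\ap')^i\bt^j)\bigr)\geq (p-1)v(\g_\ap)$, expand $(\s-1)^{p-1}$ of a product via \Cref{bin}, use $(\s-1)^m(A[\ap'])\subset \g_\ap^m A[\ap']$ for the $\ap'$-factor, and show that successive applications of $\s-1$ to powers of $\bt$ gain valuation linearly. Where you genuinely diverge is in how the special $\ap$ is produced and how the linear gain is established. The paper builds $\ap$ adaptively: starting from $\ap_0$ with $\lb\s(\bt)-\bt\rb\subset\lb\g_0\rb$ (via \Cref{btap'}), it writes $(\s-1)(\bt)=b_1\g_0$, picks $\ap_1$ with $\lb(\s-1)(b_1)\rb\subset\lb\g_1\rb$, and iterates to get $(\s-1)^i(\bt)=b_i\g_{i-1}\cdots\g_0$ with $b_i\in B$; taking $\g$ to be the $\g_j$ of smallest valuation then gives $v((\s-1)^t(\bt))\geq t\,v(\g)$ with no reference to the infimum $c$. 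You instead prove the uniform a priori estimate $v((\s-1)^l(\bt^j))\geq s_\bt+(l-1)c$ (factoring $(\s-1)^m(\bt^j)$ as an element of $K^{\times}$ times a unit, which uses $e_{L|K}=1$ in the defect case), and then select $\ap$ by approximating $c=\inf_{\ap\in\mathscr{S}}(-v(\ap))$ from above beneath the finitely many thresholds $\frac{s_\bt+(l-1)c}{l}$, all of which exceed $c$ because $s_\bt>c$ (equivalently, because $\js$ is not principal in the defect case, \Cref{j}). This trades the paper's explicit chain for an estimate-plus-density argument in the style of the closing step of the proof of \Cref{hn}; both routes ultimately rest on \Cref{hn} (yours through $c=\inf_{\ap}v(\g_\ap)$, the paper's through \Cref{btap'}), and your version has the merit of isolating exactly why the naive bound $v((\s-1)^l(\bt^j))\geq s_\bt$ is not enough.
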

\begin{proof} 
We compute valuation of these elements and show that it is
non-negative.

For all $\ap \in \mathscr{S}, 1 \leq k \leq p-1 $,
$\s^k(\ap')-\ap'=(\s^k(\ap)-\ap)\g=k\g$. Therefore,
$F'(\ap')= - \g^{p-1}$. In particular, it is an element of
$K$ and hence, fixed by $\s$.

We wish to select $\ap$ such that for all $i,j \geq 0$,

\begin{equation} v((\s-1)^{p-1}(\ap'^i \bt^j)) \geq
v(F'(\ap'))=(p-1)v(\g)
\end{equation}

\begin{enumerate}[(Step 1)]
\item \textbf{Construction of the special $\ap'$}\\We begin with
an $\ap_0$ satisfying $\lb \s(\bt)-\bt \rb \subset \lb
\s(\ap_0')-\ap_0' \rb $. Let $(\s-1)(\bt)=b_1\g_0; b_1 \in B$.
Therefore, $(\s-1)^2(\bt)=(\s-1)(b_1)\g_0$. We don't know much
about the valuation of $(\s-1)(b_1)$, however. Let $\ap_1$ be
such that $\lb (\s-1)(b_1) \rb \subset \lb (\s-1)(\ap_1') \rb$.
Write $(\s-1)(b_1)=b_2 \g_1; b_2 \in B$. Now we can write
$(\s-1)^2(\bt)= b_2 \g_1\g_0$. Using this process, we can find
$b_i$'s and $\ap_i$'s such that $(\s-1)^i(\bt)= b_i
\g_{i-1}...\g_1\g_0;$ where $b_i \in B$.

Let $\g$ be the $\g_j$ with smallest valuation involved in the
expression for $i=p-1$. Let $\ap$ denote the corresponding
$\ap_j$. We will show that this $\ap$ satisfies the required
property (5.8).

\item \textbf{Proof for $\bt$}\\$\lb \s(\bt)-\bt \rb \subset \lb
\s(\ap_0')-\ap_0' \rb \subset \lb \s(\ap')-\ap' \rb=\lb \g \rb$,
since $v(\g) \leq v(\g_0)$.
Due to the choice of $\g$, we also have $v((\s-1)^t(\bt)) \geq t
v(\g)$ for all $1 \leq t \leq p-1$. In particular, this is true
for $t=p-1$, proving the statement (5.8) for the case $i=0,
j=1$.

\item \textbf {Terms $\ap'^i \bt^j$}\\ For the terms of the form
$\bt^j$, we use induction on $j$ and \Cref{bin}. Valuation of
each term in the expansion is at least $(p-1)v(\g)$. In fact, by
a similar argument, $v((\s-1)^k(\bt^j)) \geq k v(\g)$ for all $1
\leq k \leq p-1$. \\ For the general terms $\ap'^i \bt^j$, first
note that $(\s-1)^k(\ap')=(\s-1)^{k-1}(\g)=0$ for all $k >1$.
Therefore, (again using the identity), we have\\
$(\s-1)^{p-1}(\ap'^i \bt^j)=\ap'^i(\s-1)^{p-1}(\bt^j)+
(p-1)(\s-1)(\ap'^i)(\s-1)^{p-2}(\s(\bt^j))$. Once again, both
these terms have valuation $\geq(p-1)v(\g)$.
\end{enumerate}
This concludes the proof of the proposition.
\end{proof}

\subsection{Proof of \Cref{fil}} Let $\bt$ and corresponding
special $\ap'$ be as described above in (Step 1). We recall that
for an $A$-module $R \subset L$, $R^*:= \{ x \in L \mid
\Tr_{L|K}(xR) \subset A\}$.
\begin{enumerate}
\item $A[\ap', \bt]^*=A[\ap']^*$
\begin{proof}
Clearly, $A[\ap', \bt]^* \subset A[\ap']^*= \frac{1}{F'(\ap')}
A[\ap']$. We proved that $(\s-1)^{p-1}(\frac{1}{F'(\ap')}
A[\ap', \bt]) \subset B$. Since $(\s-1)^{p-1}=\Tr_{L|K}$ has
image in $K$, $\Tr_{L|K}(\frac{1}{F'(\ap')} A[\ap', \bt])
\subset B \cap K =A$ and we have the reverse inclusion.
\end{proof}
\item $R:=A[\ap', \bt], S:=A[\ap']$ are finitely generated free
$A$-modules.
\begin{proof} Since $\bt , \ap'$ are integral over $A$, $R$ and
$S$ are finitely generated $A$-modules. $A$ is a valuation ring
and $R,S$ are finitely generated torsion-free $A$-modules.
Therefore, $R, S$ are free $A$-modules (of finite ranks).
\end{proof}
\item $A[\ap', \bt]=A[\ap']$
\begin{proof} $R$ is a free $A$-module of finite rank. Hence, $
R^{**}=(R^*)^*=R$. Similarly, $S^{**}=S$. By (1), $R^*=S^*$ and
hence, $R=S$.
\end{proof}
\end{enumerate}

These statements, in combination with \Cref{btap} prove part
(ii) of \Cref{fil}. Part (i) was already proved in Lemma 5.4.
This concludes the proof.

\section{Proof of \Cref{comm dia}}

\begin{lm}\label{norm} $N_{L|K}=N: B \to A/(\is \cap A)$ is a
surjective ring homomorphism.
\end{lm}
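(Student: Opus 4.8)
The statement is that the norm map descends to a surjective ring homomorphism $N\colon B\to A/(\is\cap A)$. The norm $N=N_{L|K}\colon L^\times\to K^\times$ is of course multiplicative, and one extends it to all of $B$ (including zero) in the usual way via $N(b)=\prod_{i=0}^{p-1}\s^i(b)$, which makes it a multiplicative map $B\to A$ (it lands in $A$ since $B$ is integral over $A$ and $A$ is integrally closed, being a valuation ring). The content of the lemma is therefore twofold: first, that $N$ becomes \emph{additive} after reducing modulo $\is\cap A$, and second, that the resulting ring homomorphism $B\to A/(\is\cap A)$ is \emph{surjective}.

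For additivity, the plan is to compute $N(b+c)-N(b)-N(c)$ for $b,c\in B$ and show it lies in $\is\cap A$. Expanding $N(b+c)=\prod_{i=0}^{p-1}(\s^i(b)+\s^i(c))$ and subtracting $\prod\s^i(b)$ and $\prod\s^i(c)$, every surviving term is a product containing at least one factor $\s^i(b)$ and at least one factor $\s^j(c)$ with $i\neq j$ among the mixed terms; the key point is that such a product is congruent modulo $\is$ to the analogous product where all the $\s$-conjugates are replaced by the untwisted elements, because $\s^i(b)-b\in\is$ for every $i$ (this is exactly the kind of computation done in \Cref{phi} and \Cref{sig}). Once everything is pushed down to untwisted elements, the alternating sum telescopes: $\prod(b+c)$ minus $\prod b$ minus $\prod c$ collapses because $\prod_{i}(b+c) \equiv (b+c)^p$, $\prod_i b\equiv b^p$, $\prod_i c\equiv c^p$ mod $\is$, and in characteristic $p$ we have $(b+c)^p=b^p+c^p$. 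Hence $N(b+c)\equiv N(b)+N(c)\bmod\is$, and since the left side lies in $A$ this congruence in fact holds modulo $\is\cap A$. Multiplicativity is automatic, and $N(1)=1$, so we get a ring homomorphism $B\to A/(\is\cap A)$.

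For surjectivity, the plan is to use the structure of $B$ established earlier. In the defectless case, \Cref{dlogalpha}(i) gives an explicit description of $B$ (either $B=\sum A_i\ap^i$ or $B=A[\ap\g]$), and one checks directly that $N$ hits a set of generators of $A$ modulo $\is\cap A$ — indeed $N(\ap)=\pm f$ relates to the generator $1/f$ of $H$, and \Cref{hn} identifies $H$ with $\ns$. More efficiently, since $A\subset B$ and $N(a)=a^p$ for $a\in A$, the image contains $\{a^p\mid a\in A\}$; combined with additivity mod $\is\cap A$ and the fact that $A/\m_A$-linear combinations are reached, one reduces surjectivity to showing $A=A^p+(\is\cap A)$, which follows because $\is\cap A$ is large enough — concretely, $\m_A\subset\is\cap A$ when the extension is wild/ferocious/defect (the residue map forces $\is\cap A\supseteq$ a proper ideal, and modulo that ideal every residue class, being in a field of characteristic $p$ where $x\mapsto x^p$ is... ) — here I would instead fall back on the filtered-union description \Cref{fil}: $B=\bigcup A[\ap']$ with $\ap'\in B^\times$, and on each $A[\ap']$ one computes $N$ explicitly and checks it surjects onto $A/(\is\cap A)$, then takes the union.

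The main obstacle I expect is the surjectivity, specifically pinning down exactly which ideal $\is\cap A$ is and verifying $A=\{N(b)\bmod\is\cap A\mid b\in B\}$ uniformly across the wild, ferocious, and defect cases — the additivity computation, while a little intricate, is a routine characteristic-$p$ telescoping argument once one is careful that all the ``error'' terms genuinely lie in $\is$ (not just in a power of it) and that intersecting with $A$ causes no loss. A secondary subtlety is making sure the extension of $N$ from $L^\times$ to all of $B$ (handling $0$ and zero divisors — there are none, $B$ is a domain) is harmless, which it is.
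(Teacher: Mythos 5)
Your additivity argument is, at its core, identical to the paper's: the paper simply observes that $\s^i(x)\equiv x \bmod \is$ for every $i$, hence $N(x)=\prod_{i=0}^{p-1}\s^i(x)\equiv x^p \bmod \is$, so that $N:B\to B/\is$ \emph{is} the $p$-power map and is therefore additive in characteristic $p$; since $N$ takes values in $A$, the congruence descends to $A/(\is\cap A)$. Your term-by-term expansion of $N(b+c)-N(b)-N(c)$ is a more laborious route to the same congruence $\prod_i\s^i(x)\equiv x^p$, and your worry about error terms lying ``in $\is$, not just a power of it'' is unfounded in the easy direction (one only needs membership in $\is$, which is immediate). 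So for the part of the lemma the paper actually proves, your plan succeeds.

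On surjectivity: be aware that the paper's own proof does not address it at all -- it says explicitly that only additivity needs to be checked, and surjectivity of $N:B\to A/(\is\cap A)$ is never used later (only the induced maps $\dn$ and $\overline{N}$ in the commutative diagram matter, and $\overline{N}$ is asserted to be \emph{injective}). Your diagnosis that surjectivity would reduce to $A=A^p+(\is\cap A)$ is exactly right, and your hesitation is well founded: since $N(b)\equiv b^p \bmod \is$, the image of $N$ in the residue field $k$ of $A$ is contained in $l^p$, so when $k$ is imperfect with $[k:k^p]>p$ (e.g. the ferocious case over $k=\F_p(u,w)$) the map cannot be surjective onto $A/(\is\cap A)$ whenever $\is\cap A\subset \m A$. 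In short: do not try to repair the surjectivity claim via \Cref{fil} -- it is not provable in the stated generality, it is not proved in the paper, and it is not needed for \Cref{comm dia}.
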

\begin{proof} We just need to check the additive property of $N:
B \to A/(\is \cap A)$ in order to prove that it is a ring
homomorphism.
For $x \in B, N(x)= x \prod_{i=1}^{p-1} \s^i(x)$.

For each $1 \leq i \leq p-1, \s^i(x) \equiv x\mod \is.$

Thus, $N: B \to B/\is$ is just the $p$-power map, that is, $x
\mapsto x^p \mod \is$ and hence, additive. This makes $N: B \to
A/(\is \cap A)$ additive as well.
\end{proof}

\begin{rem} We don't need any assumptions regarding defect or
rank here.
\end{rem}

\subsection{Case I: Relation between the ideals $H, \I, \is,
\js$}

\begin{notn}
Case I is the defectless case, so best $f$ exists and we can
define the ideal $\I$ of $A$ by

$\dis \I:= \lb \{ \frac{a}{f} \in K \mid v_K(f+a^p-a)=v_K(f) \}
\rb$. It is worth noting that this definition coincides with the
one in Lemma 2.9. Let $v_L(\ap)=-v_0 \leq 0$. Hence,
$v_L(f)=-pv_0$, $H=\{x \in K \mid v_L(x)\geq pv_0 \}$, $\I =\{x
\in K \mid v_L(x)\geq (p-1) v_0 \}$ and $\js=\{x \in L \mid
v_L(x)\geq v_0 \}$
\end{notn}
\begin{pr}\label{rsw well} $ H \subset \I \subset \is \cap A$.
\end{pr}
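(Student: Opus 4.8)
The plan is to exploit the explicit descriptions of the four ideals in terms of $v_L$-values given in the preceding notation. Recall we are in Case I (defectless), so best $f$ exists, $v_L(\ap)=-v_0$ with $v_0\ge 0$, and we have $H=\{x\in K\mid v_L(x)\ge pv_0\}$, $\I=\{x\in K\mid v_L(x)\ge(p-1)v_0\}$, and $\js=\{x\in L\mid v_L(x)\ge v_0\}$. The inclusion $H\subset\I$ is then immediate from $pv_0\ge(p-1)v_0$ (since $v_0\ge0$), so the real content is the second inclusion $\I\subset\is\cap A$.

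For $\I\subset\is\cap A$: first note that every element of $\I$ lies in $A$, since $(p-1)v_0\ge0$ forces $v_L(x)\ge0$; so it suffices to show $\I\subset\is$. I would split into the two subcases of Lemma \ref{cyclic}. In the ramified case $e_{L|K}=p$ (so $v_0>0$ and $l=k$), one has by \Cref{dlogalpha} and \Cref{phi}(a) that $\is$ is principal, and more precisely — tracking valuations through $\Phi_\s(d\ap)=\s(\ap)-\ap=1$ and the structure of $B$ as $\sum A_i\ap^i$ — that $\is=\{x\in L\mid v_L(x)\ge (p-1)v_0\}$; indeed $\s(\ap^{p-1})-\ap^{p-1}$ has the smallest valuation among the $\s(b)-b$ and this valuation is $(p-1)v_0$ (from $\s(\ap)^{p-1}-\ap^{p-1}=(\ap+1)^{p-1}-\ap^{p-1}$, whose leading term is $(p-1)\ap^{p-2}$, of $v_L$-value $-(p-2)v_0$... here one must be careful and instead compute directly that the minimum over $b\in B$ of $v_L(\s(b)-b)$ equals $(p-1)v_0$). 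Then $\I=\{x\in K\mid v_L(x)\ge(p-1)v_0\}$ is visibly contained in $\{x\in L\mid v_L(x)\ge(p-1)v_0\}=\is$. In the ferocious case ($l|k$ purely inseparable of degree $p$, $v_0=0$), we have $\I=\{x\in K\mid v_L(x)\ge0\}=A$, and I would show $\is=B$ directly: with $\mu$ as in \Cref{cyclic}(b), $\s(\mu)-\mu$ is a unit of $B$ because its residue class $\overline{\s(\mu)}-\overline{\mu}$ is nonzero in $l$ (as $\s$ acts nontrivially on the purely inseparable generator $\overline\mu$), giving $\is=B\supset A=\I$.

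The main obstacle I anticipate is pinning down exactly the valuation $\min\{v_L(\s(b)-b)\mid b\in B\}$ in the wildly ramified subcase — i.e. proving it equals $(p-1)v_0$ rather than merely bounding it. The clean way is to use \Cref{phi}(a): $\Phi_\s\colon\OL/\is\OL\to\is/\is^2$ is surjective with $d\ap\mapsto \s(\ap)-\ap$, combined with \Cref{dlogalpha}(ii) that $\dl\ap$ generates $\ol$; one then transfers to the logarithmic picture via $\varphi_\s$ of \Cref{phi}(2), where $\varphi_\s(\dl\ap)=\frac{\s(\ap)}{\ap}-1=\frac{1}{\ap}$, an element of $v_L$-value $v_0$, showing $\js=\ap^{-1}B=\{x\mid v_L(x)\ge v_0\}$ is principal; then $\is=\ap^{p-1}\js=\{x\mid v_L(x)\ge pv_0-? \}$ — and reconciling the bookkeeping between $\is$ and $\js$ (they differ by the factor relating $\s(b)-b$ to $\frac{\s(b)}{b}-1$, i.e. by $v_L(b)$ for the optimal $b$) is where care is needed. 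Once the identity $\is\cap A=\{x\in K\mid v_L(x)\ge(p-1)v_0\}=\I$ is established in each subcase, both inclusions of the proposition follow at once, with $H\subset\I\subset\is\cap A$ reading off directly from $pv_0\ge(p-1)v_0\ge 0$.
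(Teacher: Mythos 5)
Your reduction of the statement to inclusions of valuation sets is fine, and $H\subset\I\subset A$ is indeed immediate from $pv_0\ge(p-1)v_0\ge 0$. The problems are in your treatment of $\I\subset\is$. First, in the ferocious case ($e_{L|K}=1$) you assert $v_0=0$ and $\is=B$: both are false. Since $L|K$ is not unramified, best $f$ has $v_K(f)<0$, so $v_0=-v_L(\ap)>0$; and $\s$ acts \emph{trivially} on the purely inseparable residue extension $l|k$ (a purely inseparable extension has no nontrivial automorphisms), so with $\mu=\ap\g$ one has $\s(\mu)-\mu=\g\in\m K$, whose residue class is $0$ — it is not a unit. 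If $\is=B$ held, $L|K$ would be unramified. The correct statement, which is what the paper uses, is $\is=\js=(\g)=\{x\in L\mid v_L(x)\ge v_0\}$, after which $(p-1)v_0\ge v_0$ gives the inclusion.

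Second, in the wild case ($e_{L|K}=p$) your key claim $\min\{v_L(\s(b)-b)\mid b\in B\}=(p-1)v_0$, i.e.\ $\is=\{x\in L\mid v_L(x)\ge(p-1)v_0\}$, is false, and you never actually establish it — your own parenthetical leaves the computation unfinished. Classically $i(\s)=j(\s)+1$, so $\is=\m L\,\js$, which differs from $\{v_L\ge(p-1)v_0\}$: for $p=2$ your formula would give $\is=\js$, which fails since $\is\subset\m L\js\subsetneq\js$, and for $p\ge 3$ it is in general strictly smaller than $\is$. For $p>2$ the inclusion $\I\subset\is$ can be rescued cheaply — the paper exhibits $\s(1/\ap)-1/\ap=-1/(\ap(\ap+1))$, of valuation $2v_0$, so $\{v_L\ge 2v_0\}\subset\is$ and $(p-1)v_0\ge2v_0$ — but for $p=2$ this bound is useless and a genuinely different argument is required: the paper takes $b=a\ap/f\in B$ for each generator $a/f$ of $\I$ and computes $\s(b)-b=\Tr(b)=a/f$, placing $\I$ directly inside $\is$. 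As written, your proof has no valid argument in the case $p=2$, $e_{L|K}=2$, and the ferocious case rests on false premises.
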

\begin{proof} 
Comparing valuations mentioned above, it is clear that $H
\subset \I$.

We break down the rest of the argument into several cases:
\begin{itemize}
\item If $e_{L|K}=1, \is=\js=\{x \in L \mid v_L(x)\geq v_0 \}$
and the result follows.
\item Let $e_{L|K}=p$.
\begin{enumerate}[(i)]
\item $p>2$

$\frac{1}{\ap} \in B \Ra \s \lb \frac{1}{\ap} \rb -
\frac{1}{\ap}= \frac{-1}{\ap (\ap+1)} \in \is$.

Hence, $\{x \in L \mid v_L(x)\geq 2 v_0 \} \subset \is $. Since
$p>2, \ p-1 \geq 2$ and hence, $\I \subset \is \cap A$. We
cannot use this argument for $p=2$, since in that case,
$p-1=1<2$.
\item $p=2$  

Let $\frac{a}{f} \in K$ such that $ v_K(f+a^2-a)=v_K(f)$.
Consider $ b= \frac{a \ap}{f}.$ Then

\noindent $ v_L(b)=v_L(\ap)+v_L(\frac{a}{f}) \geq -v_0+v_0=0 \Ra
b \in B \Ra \s(b)-b \in \is.$

$\s(b)-b=\s(b)+b=\Tr(b)=\frac{a}{f} \\ \Tr(\ap)=\frac{a}{f}$
since $\\ \Tr(\ap)=1$.

\noindent Hence, $\frac{a}{f} \in \is \cap K = \is \cap A$. This
concludes the proof.
\end{enumerate}

\end{itemize}
\end{proof}

\subsection{Case I} Let $f$ be best, $b \in B$. We prove that
the following diagram commutes:

\noindent $
\begin{tikzcd}[column sep=large]
\ol/\js \ol \arrow{r}{\varphi_\s}[swap]{\cong}
\arrow{d}[swap]{\dn}
&\js/\js^2  \arrow[hook]{d}{\overline{N}}\\
\ok/(\is \cap A )\ok   &H/H^2 \arrow{l}{\rsw}
\end{tikzcd}$ where the maps are given by $
\begin{tikzcd}[column sep=large]
b \dl \ap \arrow{r}{\varphi_\s}[swap]{} \arrow{d}[swap]{\dn}
&b \frac{1}{\ap}  \arrow{d}{\overline{N}}\\
N(b) \dl f  &N(b) \frac{1}{f} \arrow{l}{\rsw}
\end{tikzcd}$

\begin{proof}
Consider the map $\varphi_\s: \ol/ \js \ol \to \js/\js^2$. By
\Cref{phi}, we know that $\varphi_{\s}$ is a surjective
$B$-module homomorphism. We prove that it is injective.

Since $\ol$ is generated by $\dl \ap$, it is enough to
consider elements of the form $b \dl \ap ;$ where $ b \in B$.
$b \dl \ap \in Ker (\varphi_\s) \Lr b
(\frac{\s(\ap)}{\ap}-1)=b \frac{1}{\ap} \in \js^2 \Lr b \in \js
\Lr b \dl \ap \in \js \ol$. Therefore, $\varphi$ is a
$B$-module isomorphism.

Next, we note that $H$ is generated by $\frac{1}{f}$ and
$N(\ap)=f$. By \Cref{norm}, we have additivity of the two
vertical maps. Since $ H \subset \I \subset \is \cap A$, the map
$\rsw$ is independent of the choice of best $f$.

\end{proof}

\subsection{Preparation for  Case II}

\subsubsection{Valuation on $A$ and $B$: } Fix some $\ap_0 \in
\mathscr{S}$ as our starting point. We may only consider $\ap
\in \mathscr{S}$ such that $v(\ap_0) <v(\ap)$. Consider the
subset $\mathscr{S}_0$ of $\mathscr{S}$ consisting of such
$\ap$'s. Let $v(\ap_0)=-\mu <0, \g_0 \in A$ such that $
v(\g_0)=\mu$. For each $\ap \in \mathscr{S}_0$, we have
corresponding $\g_\ap \in A$ with $v(\g_\ap)=-v(\ap)
<v(\g_0)=\mu$ and $\ap'=\ap \g_\ap \in B^{\times}$. Let $F_\ap$
denote the minimal polynomial of $\ap'$ over $K$. We recall that $F'_\ap(\ap')=-\g_\ap^{p-1}
$ and hence, we have the isomorphism $\Om^1_{A[\ap']|A} \cong A[\ap']/(\g_\ap^{p-1})$ described in 6.3.3 .\\ Let $f_\ap:=\ap^p-\ap=N(\ap) \in K$.
 
\subsubsection{Special Ideals} Due to the defect, we have
$\is=\js$ by \Cref{ij}.\\ Let $v_0:= \inf \{v(\frac{\s(b)}{b}-1) \mid b \in
B^{\times} \} \in \R$. Then
 
\begin{enumerate}[(a)]
\item $\is=\js=\{b \in B \mid v(b)>v_0\}$, and consequently, by
\Cref{hn} ,
\item $\ns= \{a \in A \mid v(a)> pv_0\}=H$.
\end{enumerate}

We have $\inf \{v(\frac{\s(b)}{b}-1) \mid b \in B^{\times}
\}=\inf \{v(\s(b)-b) \mid b \in B^{\times} \}=\inf
\{v(\s(\ap')-\ap') \mid \ap \in \mathscr{S}_0 \} \in \R$. The last equality follows from \Cref{btap'}.
Therefore,
\begin{equation}
v_0= \inf \{v(\g_{\ap}) \mid \ap \in \mathscr{S}_0  \} \in \R
\end{equation}

\subsubsection{Differential Modules $\Om^1_{A[\ap']|A}$'s } 

We compare $\Om^1_{A[\ap_0']|A}$ and $\Om^1_{A[\ap']|A}$. Let
$c_\ap:=\g_\ap^{p-1}$, $c_0:=\g_0^{p-1}$ and the ratio
$\g_0/\g_\ap =: a_\ap \in A$. Then we have the following
commutative diagram:

\begin{center} $\dis
\begin{tikzcd}[column sep=large]
\Om^1_{A[\ap_0']|A} \arrow{r}{\cong} \arrow[hook]{d}{\rho_\ap}
&A[\ap_0']/(c_0) \arrow{r}{\cong} \arrow[hook]{d}{\iota_\ap}
&(\frac{1}{a_0})A[\ap_0']/(\frac{c_0}{a_0})A[\ap_0']
\arrow[hook]{d}{j_\ap}\\
\Om^1_{A[\ap']|A} \arrow{r}{\cong} &A[\ap']/(c_\ap)
\arrow{r}{\cong}
&(\frac{1}{a_\ap})A[\ap']/(\frac{c_\ap}{a_\ap})A[\ap']
\end{tikzcd}$ 
\end{center}
Here, $a_0=\g_0/\g_0=1 \in A$ and the isomorphisms are given by
$\dis
b_0 d \ap_0' \ \mapsto
b_0 \ \mapsto \frac{b_0}{a_0}; \ $ for all $ \ b_0 \in
A[\ap_0']$ and $
b d \ap' \  \mapsto
 b \ \mapsto \frac{b}{a_\ap}; \ $ for all $ \ b \in A[\ap']
$. The vertical maps are described as follows. We look at the
relationship between the generators $\ap_0', \ap'$ and
similarly, between $d \ap_0' \ , \ d \ap'$. Since $\ap$ and
$\ap_0$ give rise to the same extension $L|K$, $\ap_0-\ap=: h
\in K$. Comparing the valuations, we see that $v(\ap_0)=v(h) <
v(\ap)$ and hence, $u=h \g_0 \in A^{\times}$.

\begin{equation} \ap_0'=(\ap+h) \g_0= (\ap+h) \g_\ap \cdot \ a_\ap=
a_\ap \ap' + u
\end{equation}
Since $\ap' \in B$ and $a_\ap , u \in A$, $\dis \ap' d a_\ap =0=
du$ in the differential module $\dis \Om^1_{A[\ap']|A}$.
Therefore, we have
\begin{equation}
d \ap_0'=a_\ap d \ap' + \ap' d a_\ap + du = a_\ap d \ap' 
\end{equation}
Thus, $\rho_\ap$, $\iota_\ap$ are given by multiplication by
$a_\ap$. The map $j_\ap$ is also multiplication by $a_\ap$ and
rises from the inclusions

\begin{equation} (\frac{1}{a_0})A[\ap_0'] \subset
(\frac{1}{a_\ap})A[\ap']; \ \frac{1}{a_0} \mapsto \frac{1}{a_0}
a_\ap=\frac{1}{a_\ap} a_\ap^2 \end{equation} and
\begin{equation} (\frac{c_0}{a_0})A[\ap_0'] \subset
(\frac{c_\ap}{a_\ap})A[\ap']; \ \frac{c_0}{a_0} \mapsto
\frac{c_0}{a_0} a_\ap=\frac{c_\ap}{a_\ap} a_\ap^p \end{equation}

\begin{lm}\label{diff} Consider the fractional ideals $\T$ and
$\T'$ of $L$ given by $\T= \{x \in L \mid v(x) >v_0-\mu \}$ and
$\T'=\{ x \in L \mid v(x) > pv_0-\mu \}$. Then we have:
\begin{enumerate}[(a)]
\item $\Om^1_{B|A} \cong \T/\T'$ 
\item $\T/ \js \T \cong \js/\js^2$ 
\end{enumerate}
\end{lm}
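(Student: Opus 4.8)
The plan is to realize $\Om^1_{B|A}$ as a filtered colimit over the rings $A[\ap']$, $\ap\in\mathscr{S}_0$, and to read off $\T$ and $\T'$ from the answer. By \Cref{fil} (together with the comparison Lemma~5.4, which shows every $A[\ap']$ with $\ap\in\mathscr{S}$ is contained in some $A[\ap'']$ with $\ap''\in\mathscr{S}_0$) we have $B=\bigcup_{\ap\in\mathscr{S}_0}A[\ap']$; since the formation of $\Om^1_{B|A}$ commutes with filtered colimits in $B$ and the system is directed (Lemma~5.4), $\Om^1_{B|A}=\varinjlim_{\ap\in\mathscr{S}_0}\Om^1_{A[\ap']|A}$. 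Each term is $\Om^1_{A[\ap']|A}\cong A[\ap']/(c_\ap)$ with $c_\ap=\g_\ap^{p-1}$, and the computation of 6.3.3 -- applied to an arbitrary pair $\ap_1,\ap_2\in\mathscr{S}_0$ with $v(\g_{\ap_1})\geq v(\g_{\ap_2})$ in place of $\ap_0,\ap$ (the only input being $\ap_1'=(\g_{\ap_1}/\g_{\ap_2})\ap_2'+u$ with $u\in A$, so $d\ap_1'=(\g_{\ap_1}/\g_{\ap_2})\,d\ap_2'$) -- shows that, under the identifications $\Om^1_{A[\ap']|A}\cong\tfrac{1}{a_\ap}A[\ap']/\tfrac{c_\ap}{a_\ap}A[\ap']$ with $a_\ap=\g_0/\g_\ap$, the transition maps become the inclusions of these fractional ideals of $L$. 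Since $v(\tfrac{1}{a_\ap})=v(\g_\ap)-\mu$ and $v(\tfrac{c_\ap}{a_\ap})=v(\g_\ap^p/\g_0)=p\,v(\g_\ap)-\mu$, exactness of filtered colimits gives
\[
\Om^1_{B|A}\;\cong\;\Big(\bigcup_{\ap}\tfrac{1}{a_\ap}A[\ap']\Big)\Big/\Big(\bigcup_{\ap}\tfrac{c_\ap}{a_\ap}A[\ap']\Big).
\]

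For (a) it then remains to prove $\bigcup_\ap\tfrac{1}{a_\ap}A[\ap']=\T$ and $\bigcup_\ap\tfrac{c_\ap}{a_\ap}A[\ap']=\T'$. The crucial point is that the infimum $v_0=\inf\{v(\g_\ap)\mid\ap\in\mathscr{S}_0\}$ from 6.3.2 is \emph{not} attained: $\g_\ap=\s(\ap')-\ap'$ lies in $\is=\{b\in B\mid v(b)>v_0\}$, hence $v(\g_\ap)>v_0$ for every $\ap$. Therefore each $\tfrac{1}{a_\ap}A[\ap']$ consists of elements of valuation $\geq v(\g_\ap)-\mu>v_0-\mu$, so $\bigcup_\ap\tfrac{1}{a_\ap}A[\ap']\subseteq\T$, and likewise $\bigcup_\ap\tfrac{c_\ap}{a_\ap}A[\ap']\subseteq\T'$. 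Conversely, let $x\in\T$. Using $\inf v(\g_\ap)=v_0<v(x)+\mu$, choose $\ap_1\in\mathscr{S}_0$ with $v(\g_{\ap_1})\leq v(x)+\mu$, so that $xa_{\ap_1}=x\g_0/\g_{\ap_1}\in B$; by \Cref{fil}(ii) pick $\ap_2$ with $xa_{\ap_1}\in A[\ap_2']$, and then $\ap_3\in\mathscr{S}_0$ with $v(\g_{\ap_3})\leq\min(v(\g_{\ap_1}),v(\g_{\ap_2}))$. By Lemma~5.4, $A[\ap_1'],A[\ap_2']\subseteq A[\ap_3']$ and $\g_{\ap_1}/\g_{\ap_3}\in A$, whence $xa_{\ap_3}=(xa_{\ap_1})(\g_{\ap_1}/\g_{\ap_3})\in A[\ap_3']$, i.e.\ $x\in\tfrac{1}{a_{\ap_3}}A[\ap_3']$. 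The identification $\bigcup_\ap\tfrac{c_\ap}{a_\ap}A[\ap']=\T'$ is the same argument, now choosing $\ap_1$ with $v(\g_{\ap_1})\leq(v(x)+\mu)/p$. This proves (a).

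For (b), by 6.3.2(a) the ideal $\js$ is the fractional ideal $\{x\in L\mid v(x)>v_0\}$ (here $v_0\geq0$), while $\T=\{x\in L\mid v(x)>v_0-\mu\}$. Since $\g_0\in K^\times$ has $v(\g_0)=\mu$, multiplication by $\g_0$ carries $\T$ bijectively onto $\js$; hence $\T=\g_0^{-1}\js$, and therefore $\js\,\T=\g_0^{-1}\js^2$. Multiplication by $\g_0$ is $B$-linear and descends to the asserted isomorphism $\T/\js\T=\g_0^{-1}\js/\g_0^{-1}\js^2\xrightarrow{\ \cong\ }\js/\js^2$. The step I expect to be the main obstacle is the ``$\supseteq$'' half of the two union identifications in (a): to place an arbitrary element of the fractional ideal $\T$ (resp.\ $\T'$) inside a \emph{single} $\tfrac{1}{a_\ap}A[\ap']$ (resp.\ $\tfrac{c_\ap}{a_\ap}A[\ap']$) one must combine non-attainment of $v_0$, \Cref{fil}(ii), and the directedness of $\{A[\ap']\}$ from Lemma~5.4 simultaneously. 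Everything else -- commutation of $\Om^1$ with filtered colimits, the transition-map identification via 6.3.3, and part (b) -- is routine.
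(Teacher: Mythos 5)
Your proposal is correct and follows essentially the same route as the paper: identify $\Om^1_{B|A}$ with the colimit of the modules $\Om^1_{A[\ap']|A}\cong\tfrac{1}{a_\ap}A[\ap']/\tfrac{c_\ap}{a_\ap}A[\ap']$ via the transition maps of 6.3.3, compute the valuations of $\tfrac{1}{a_\ap}$ and $\tfrac{c_\ap}{a_\ap}$ to get $\T/\T'$, and obtain (b) by multiplication by $\g_0$. You simply make explicit the steps the paper leaves implicit, namely that $v_0$ is not attained and that the directedness of the system (Lemma 5.4 together with \Cref{fil}(ii)) lets one place any element of $\T$ into a single $\tfrac{1}{a_\ap}A[\ap']$.
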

\begin{proof}
\begin{enumerate}[(a)]
\item Let $I$ be the fractional ideal of $L$ generated by the
elements $(\frac{1}{a_\ap})$. Let $I'$ be the fractional ideal
of $L$ generated by the elements $(\frac{c_\ap}{a_\ap})$. Under
the isomorphisms described in the preceding discussion, we can
identify each $\dis \Om^1_{A[\ap']|A}$ with $
(\frac{1}{a_\ap})A[\ap']/(\frac{c_\ap}{a_\ap})A[\ap']$. Taking
limit over $\ap$'s, we can identify $\Om^1_{B|A}$ with $I/I'$.

\noindent Since $-v(a_\ap)=v(\g_\ap)-v(\g_0)= v(\g_\ap)-\mu$,
$\dis I=\{x \in L \mid v(x) > \inf_\ap v(\g_\ap)-\mu \} = \T$.
Similarly, $v(c_\ap)=(p-1)v(\g_\ap) \Ra
v(\frac{c_\ap}{a_\ap})=pv(\g_\ap)-\mu \Ra I'=\T'$.

\item This follows from the fact that $\T \cong \js $ as
$B$-modules, via the map $\times \g_0 : x \mapsto x \g_0$.
\end{enumerate}
\end{proof}

\subsection{Proof of \Cref{comm dia} in Case II} 
Due to the defect, we consider $\OL$ and $\OK$ instead:

\begin{center}$
\begin{tikzcd}[column sep=large]
\OL/\js \OL \arrow{r}{\varphi_\s}[swap]{\cong}
\arrow{d}[swap]{\dn}
&\js/\js^2  \arrow[hook]{d}{\overline{N}}\\
\OK /(\is \cap A )\OK   &H/H^2 \arrow{l}{\rsw}
\end{tikzcd}$
\end{center}

As discussed in \Cref{diff}, we can write $\dis \OL=
\varinjlim_{\ap \in \mathscr{S}_0} \Om^1_{A[\ap']|A}$ and it is
enough to consider the diagram for each $\ap \in \mathscr{S}_0$:

\begin{equation}
\begin{tikzcd}[column sep=large]
\Om^1_{A[\ap']|A}/(\frac{1}{\ap})A[\ap']\Om^1_{A[\ap']|A}
\arrow{r}{\varphi_\s}[swap]{\cong} \arrow{d}[swap]{\dn}
&(\frac{1}{\ap})A[\ap']/(\frac{1}{\ap})^2 A[\ap']
\arrow[hook]{d}{\overline{N}}\\
\OK /(\is \cap A )\OK &(\frac{1}{f_\ap})A/(\frac{1}{f_\ap})^2A
\arrow{l}{\rsw}
\end{tikzcd}\end{equation}

 where the maps are given by 

\begin{center}$\begin{tikzcd}[column sep=large]
b d  \ap' \arrow{r}{\varphi_\s}[swap]{} \arrow{d}[swap]{\dn}
&b\ap' \frac{1}{\ap}  \arrow{d}{\overline{N}}\\
N(b\ap') \dl f_\ap  &N(b\ap') \frac{1}{f_\ap} \arrow{l}{\rsw}
\end{tikzcd}$\end{center}

We note that in $\ol, \dl \ap= \dl \ap' + \dl \g_\ap= \dl \ap'=\frac{d \ap'}{\ap'}$ and
$\frac{\s(\ap')}{\ap'}-1=\frac{1}{\ap}$.

At each $\ap$-level, we observe the following:
\begin{enumerate}[(i)]
\item The map $\varphi_\s:
\Om^1_{A[\ap']|A}/(\frac{1}{\ap})\Om^1_{A[\ap']|A} \to
(\frac{1}{\ap})/(\frac{1}{\ap})^2$ is same as the one obtained
from \Cref{diff}.
\begin{proof} By \Cref{diff},
$\Om^1_{A[\ap']|A}/(\frac{1}{\ap})\Om^1_{A[\ap']|A} \cong
(\frac{1}{a_\ap})/(\frac{1}{\ap})(\frac{1} {a_\ap}) \cong
(\frac{1}{\ap})/(\frac{1} {\ap})^2$ under the composition $d
\ap' \mapsto \frac{1}{a_\ap} \mapsto \g_0
\frac{1}{a_\ap}=\g_\ap=\frac{\ap'}{\ap}$.

On the other hand, $\varphi_\s(d \ap')=\ap' \lb
\frac{\s(\ap')}{\ap'}-1 \rb = \frac{\ap'}{\ap}$.
\end{proof}
\item The map $\rsw$ is well-defined. 

\begin{proof} 

Define the ideal $\I_\ap$ of $A$ by $\I_\ap:=\lb \{
\frac{a}{f_\ap} \in K \mid v_K(f_\ap+a^p-a)=v_K(f_\ap) \} \rb$.
As in case (I), we have $ (\frac{1}{f_\ap})A \subset \I_\ap
\subset (\frac{1}{\ap})A[\ap'] \cap A$. Since
$(\frac{1}{\ap})A[\ap'] \cap A \subset \js \cap A = \is \cap A$,
the map $\rsw$ is well-defined.
\end{proof}
\end{enumerate}

\section{The Different Ideal $\D_{B|A}$}
\subsection{Basic Properties} We recall that $\dis
\D_{B|A}^{-1}:= \{x \in L \mid \Tr_{L|K}(xB) \subset A
\}=B^*$ and the different ideal $\D_{B|A}$ is defined to be its
inverse ideal.

\begin{lm} Let $\mu \in B \backslash A, L=K(\mu)$, and $F(T) \in
K[T]$ the minimal polynomial of $\mu$ over $K$, then $\dis
A[\mu]^*= \frac{1}{F'(\mu)}A[\mu]$.
\end{lm}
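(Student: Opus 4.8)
The plan is to reduce this to \Cref{trace}, part (2), which already states exactly the analogous fact with the base ring $R$ in place of $A$: if $\mu$ is integral over $R$ and $L = K(\mu)$ with $g = \min_K(\mu)$, then $\{x \in L \mid \Tr_{L|K}(xR[\mu]) \subset R\} = \frac{1}{g'(\mu)}R[\mu]$. So first I would verify that the hypotheses of \Cref{trace}(2) hold in our setting: $A$ is a valuation ring, hence an integrally closed integral domain with fraction field $K$; the extension $L|K$ is separable (it is Artin--Schreier of degree $p$, hence separable); $\mu \in B$ means $\mu$ is integral over $A$; and $L = K(\mu)$ is given. Writing $F = \min_K(\mu)$, \Cref{trace}(2) then gives immediately $\{x \in L \mid \Tr_{L|K}(xA[\mu]) \subset A\} = \frac{1}{F'(\mu)}A[\mu]$, which is precisely the assertion $A[\mu]^* = \frac{1}{F'(\mu)}A[\mu]$ in the notation $R^* := \{x \in L \mid \Tr_{L|K}(xR)\subset A\}$ introduced in Section 5.

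The one point that requires a word is that the set in \Cref{trace}(2) is written with $\Tr_{L|K}(x R[\mu]) \subset R$ (all of $R[\mu]$), while $R^*$ as used here is defined via $\Tr_{L|K}(xR) \subset R$ for the module $R$ itself; when $R = A[\mu]$ these two coincide, since $A[\mu]\cdot A[\mu] = A[\mu]$, so $\Tr_{L|K}(x\,A[\mu]) \subset A$ is the same condition as $\Tr_{L|K}(x\,A[\mu]\cdot A[\mu]) \subset A$. Hence $A[\mu]^* = \{x \in L \mid \Tr_{L|K}(x A[\mu]) \subset A\}$ matches the left-hand side of \Cref{trace}(2) verbatim with $R = A$.

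So the proof is essentially a one-line invocation: state that $A, K, L, \mu$ satisfy the hypotheses of \Cref{trace}, apply part (2) with $R = A$ and $\beta = \mu$, and conclude $A[\mu]^* = \frac{1}{F'(\mu)}A[\mu]$. I do not anticipate any real obstacle; the only thing to be careful about is matching the bookkeeping of the two notations for the dual module, and confirming separability of $L|K$ (which is automatic for Artin--Schreier extensions since $T^p - T - f$ has derivative $-1 \neq 0$). If one wished to be fully self-contained rather than citing \Cref{trace}, one would reprove part (2) of that lemma: use that $\{1, \mu, \dots, \mu^{n-1}\}$ is an $A$-basis of $A[\mu]$, that the dual basis with respect to the trace pairing is $\left\{\frac{\mu^i}{F'(\mu)} \cdot (\text{lower-degree combinations})\right\}$ coming from the Euler/Lagrange interpolation identity $\sum_i \frac{\mu^i}{F'(\mu)} \cdot (\text{conjugate polynomial in } \mu) = 1$, together with $\Tr_{L|K}\!\left(\frac{\mu^m}{F'(\mu)}\right) = 0$ for $0 \le m \le n-2$ and $= 1$ for $m = n-1$ (which is \Cref{trace}(1)); but invoking \Cref{trace} directly is cleaner and is exactly what the excerpt sets up.
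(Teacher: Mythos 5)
Your proposal is correct and is essentially the paper's own proof: the paper simply cites Lemma 6.76 of \cite{K}, which is the same standard duality statement you obtain by applying \Cref{trace}(2) with $R=A$, $\beta=\mu$ (all hypotheses — $A$ integrally closed, $\mu$ integral over $A$, $L|K$ separable — are verified exactly as you say). Your remark reconciling the notation $R^*$ with the set appearing in \Cref{trace}(2) is right but could be stated more directly: with the module $A[\mu]$ and target $A$, the two definitions are literally the same set.
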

\begin{proof} See lemma 6.76 of \cite{K}.
\end{proof}

Now we describe the different ideal $\D_{B|A}$ in the cases I
and II. We will assume that the extension $L|K$ is ramified.
Consider the following three sub-cases:
\begin{itemize}
\item \textbf{Case (i):}  $e_{L|K}=1, f_{L|K}=p$.

Let $v$ denote both $v_L$ and $v_K$. Assume that $L|K$ is
generated by $\ap^p-\ap=f$ where $f$ is best. There exists $\g
\in A$ such that $\ap':=\ap \g \in B^{\times}$ and $l|k$ is
purely inseparable, generated by the residue class of $\ap'$.
Let $v(\ap)=-v_0$. Hence, $v(f)=-pv_0, v(\g)=v_0$.

Since $ f\g^p \in A^{\times}, F(T)=T^p-T\g^{p-1}-f\g^p $ is the
minimal polynomial of $\ap'$ over $A$. Therefore, $
F'(T)=pT^{p-1}-(p-1)\g^{p-1}=\g^{p-1}$. By \Cref{cyclic},
\Cref{dmu}, $B=A[\ap']$ and hence, $ \D_{B|A}^{-1}= B^*=
A[\ap']^*= \frac{1}{F'(\ap')}A[\ap']$ is clearly a fractional
ideal of $L$, generated by a single element
$\frac{1}{F'(\ap')}$.

\item \textbf{Case (ii):} $e_{L|K}=p, f_{L|K}=1$.

Let $f$ be best, $v_L(\ap)=-v_0$. Recall that $B=
\sum_{i=0}^{p-1} A_i\ap^i ; A_0:=A$, for all $1 \leq i \leq
p-1,\\ A_i := \{ x \in K \mid v(x) \geq iv_0 \}= \{ x \in A \ |
\ v(x) > iv_0 \}$. Let $y \in L$. Then for all $ 0 \leq i \leq
p-1$,

\begin{equation} y = \sum_{j=0}^{p-1} y_j\ap^j \in
\D^{-1}_{B|A}; y_j \in K \Lr \Tr_{L|K}(y\ap^iA_i) \subset A
\end{equation}

\noindent $\ap$ has the minimal polynomial $F(T)=T^p-T-f$.
Hence, $F'(\ap)=-1$.\\ For $1 \leq i \leq p-1,
\ap^{i+(p-1)}=\ap^i+f \ap^{i-1}$.
By \Cref{trace}, we have

$\dis \Tr_{L|K}(\ap^{i})= \left\{ \begin{array}{c l} 0 ;
\hspace{30pt} 0 \leq i \leq p-2 \\
-1 ; \hspace{30pt}  i = p-1, 2(p-1)  \\
0 ; \hspace{30pt} p \leq i \leq 2(p-1)-1  
\end{array} \right. $

Let $x_i \in A_i$. Then 

$\dis \Tr(x_i y \ap^{i})= \Tr(\sum_{j=0}^{p-1} x_iy_j\ap^{i+j})=
\left\{ \begin{array}{c l} -x_0y_{p-1} ; \hspace{30pt} i=0 \\
-x_iy_{p-1-i} ; \hspace{30pt} 1 \leq i \leq p-2  \\
-x_{p-1}y_0-x_{p-1}y_{p-1} ; \hspace{30pt}  i =p-1  
\end{array} \right. $

Hence, $ y \in \D^{-1}_{B|A}$ if and only if
$A_0y_{p-1},A_{p-1}(y_0+y_{p-1}), A_iy_{p-1-i} \subset A\ ($ for
all $ \ 1 \leq i \leq p-2)$.

\item \textbf{Case (iii):} Rank $1$ and $e_{L|K}=1, f_{L|K}=1$

Let $\Gamma \subset \R$ and let $v$ denote both $v_L, v_K$. By
\Cref{fil}, we can write $B= \cup_{\ap \in \mathscr{S}}
A[\ap']$, where $ \ap'=\ap\g_\ap \in B^{\times}, \ \g_\ap \in
A$. Recall that $\dis v_0:= \inf_{\ap \in \mathscr{S}}v(\g_\ap)
\in \R \backslash \Gamma$. By an argument similar to Case (i)
above, we have $\D^{-1}_{A[\ap']|A}=\{x \in L \mid v(x) \geq
(p-1)v(\ap)=-(p-1)v(\g_\ap) \}$.\\
Since all the $A[\ap']$'s and $B$ have the same fraction field
$L, \D^{-1}_{B|A} \subset \D^{-1}_{A[\ap']|A} \ $ for all $ \
\ap \in \mathscr{S}$. Hence, $\g_\ap^{p-1} \D^{-1}_{B|A} \subset
\g_\ap^{p-1} \D^{-1}_{A[\ap']|A} \subset A[\ap'] \subset B$ and 
$\D^{-1}_{B|A}$ is a fractional ideal of $L$ described by
\begin{align*}
\D^{-1}_{B|A}&= \cap_{\ap \in \mathscr{S}}
\D^{-1}_{A[\ap']|A}\\&=\cap_{\ap \in \mathscr{S}}\{x \in L \mid
v(x) \geq (p-1)v(\ap) \}\\ &=\{x \in L \mid v(x) \geq
(p-1)v(\ap) \ \forall \ \ap \in \mathscr{S} \}\\ &=\{x \in L \ |
\ v(x) \geq -(p-1)v_0 \}
\end{align*}
\end{itemize}

\subsection{Results in the case $e_{L|K}=1$} Let $L|K$ satisfy
(I) or (II) and assume further that $e_{L|K}=1$.

\begin{lm} $\{x \in L \mid \Tr_{L|K}(xB) \subset H \} = \js$.\end{lm}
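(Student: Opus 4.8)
The claim is that $\{x \in L \mid \Tr_{L|K}(xB) \subset H\} = \js$, under the assumption $e_{L|K}=1$ and that $L|K$ satisfies (I) or (II). My plan is to compare the set on the left, call it $\mathcal{T}$, against the inverse different $\D_{B|A}^{-1} = B^*$, which by the computations in Section 7.1 is well understood when $e_{L|K}=1$. Since $\Tr_{L|K}(xB)\subset H$ is a tightening of the condition $\Tr_{L|K}(xB)\subset A$ by the factor $H$, one expects $\mathcal{T}$ to be $H$-times-smaller than $B^*$ in the appropriate sense; concretely, I would first show $\mathcal{T}$ is a fractional ideal of $L$ and then pin down its $L$-valuation.

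First I would reduce to the ferocious case proper (residue extension purely inseparable of degree $p$), since when $L|K$ is unramified the second Remark in 0.2 already makes everything trivial ($H=A$, $\js=B$, and $\Tr_{L|K}(B)=A$ by separability of $l|k$, so $\mathcal{T}=B=\js$). In the ferocious defectless case, by \Cref{dlogalpha}(i)(b) and Case (i) of Section 7.1 we have $B = A[\ap']$ with $\ap'=\ap\g$, minimal polynomial $F(T)=T^p - T\g^{p-1} - f\g^p$, and $B^* = A[\ap']^* = \frac{1}{F'(\ap')}A[\ap'] = \frac{1}{\g^{p-1}}B$, i.e.\ $v(B^*) = -(p-1)v_0$ where $v(\g)=v_0$. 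Now $\mathcal{T} = \{x \mid \Tr_{L|K}(xB) \subset H\}$; writing $H = h B^* \cap A$-style relations is awkward, so instead I would argue directly: $x \in \mathcal{T}$ iff $\frac{1}{f}\cdot(\text{stuff})$-conditions hold, using that $H = (1/f) A$ with $v(f) = -pv_0$. The key point is $\Tr_{L|K}(xB) \subset H = (1/f)A$ iff $f x \in \D_{B|A}^{-1} = B^*$, i.e.\ $v(fx) \geq -(p-1)v_0$, i.e.\ $v(x) \geq -(p-1)v_0 + pv_0 = v_0$. And $\js = \{x \in L \mid v_L(x) \geq v_0\}$ by the Notation in 6.1 (with $e_{L|K}=1$, $\is=\js=\{v_L(x)\geq v_0\}$). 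Hence $\mathcal{T} = \js$. In the defect case (II with defect, $e_{L|K}=1$), the same valuation bookkeeping applies: $\D_{B|A}^{-1} = \{x \mid v(x) \geq -(p-1)v_0\}$ by Case (iii) of Section 7.1, $H = \{a \in A \mid v(a) > pv_0\}$ by 6.3.2(b), and $\js = \{b \in B \mid v(b) > v_0\}$, so $\Tr_{L|K}(xB)\subset H \iff f'x \in B^*$ for appropriate $f'$ of valuation $pv_0$ gives $v(x) > v_0$, matching $\js$ — though here one must be slightly careful with the strict versus non-strict inequalities coming from $v_0 \in \R \setminus \Gamma$.

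The main obstacle I anticipate is the identity $\Tr_{L|K}(xB) \subset H \iff fx \in \D_{B|A}^{-1}$ when $H$ is not principal (the defect case): one cannot simply "divide by $f$" since no best $f$ exists. There I would instead use that $H = \ns$ (Theorem \ref{hn}) together with the filtered-union description $B = \cup_{\ap} A[\ap']$ from \Cref{fil}, reducing the trace condition level by level to $A[\ap']$ where $\D^{-1}_{A[\ap']|A}$ is principal, and then take the limit; the compatibility of these with the valuation descriptions in 6.3 is what makes it go through. A secondary subtlety is making sure $\mathcal{T}$ is closed under the ideal structure (it is an $B$-submodule of $L$ since $H$ is an ideal and $B$-multiplication permutes $B$), which legitimizes describing it purely by its valuation.
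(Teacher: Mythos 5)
Your proposal is correct and takes essentially the same route as the paper: the reformulation $\Tr_{L|K}(xB)\subset \lb\frac{1}{f}\rb A \Leftrightarrow fx\in \D^{-1}_{B|A}$ combined with the Section 7.1 computation $\D^{-1}_{B|A}=\{x \in L \mid v(x)\geq -(p-1)v_0\}$ is the same valuation bookkeeping the paper carries out by evaluating $a\Tr\lb\frac{1}{\ap'}\rb=\frac{a}{\g}\lb\frac{-1}{f}\rb$ directly. Your plan for the defect case (work level by level on the filtered union $B=\cup_\ap A[\ap']$, where each $\D^{-1}_{A[\ap']|A}$ is principal, pass to the infimum, and use $v_0\notin\Gamma$ to upgrade $v(x)\geq v_0$ to the strict inequality defining $\js$) is precisely how the paper's Case (iii) argument proceeds.
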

\begin{proof} Since $e_{L|K}=1,$ given any $x \in L$, there are
elements $ x' \in B^{\times}, a \in K$ such that $x=x'a$. Hence,
$\Tr(xB) = a\Tr(x'B)=a\Tr(B).$
\begin{itemize}
\item \textbf{Case(i):} We note that $\Tr (\frac{1}{\ap})=
\frac{-1}{f}$. Hence, $\js = \lb \frac{1}{\ap} \rb B \subset \{x
\in L \mid \Tr(xB) \subset H \}$. Conversely, suppose that
$\Tr(xB) \subset H=\lb \frac{1}{f} \rb A$. In particular, $a\Tr\lb
\frac{1}{\ap'} \rb = a\Tr \lb \frac{1}{\ap \g} \rb =
\frac{a}{\g}\Tr \lb \frac{1}{\ap} \rb = \frac{a}{\g}\lb
\frac{-1}{f} \rb \in H$. Hence, $\frac{a}{\g} \in A \Ra a \ap
\in B \Ra a \in \js$.

\item \textbf{Case (iii):} The argument is very similar to the
case (i). Again, $\js \subset \{x \in L \mid \Tr(xB) \subset H
\}$. Conversely, suppose that $\Tr(xB) \subset H$. Hence, for all
$ \ap \in \mathscr{S}$ ,

$\frac{a}{\g_\ap}\lb \frac{-1}{f_\ap} \rb \in H \\\Ra
v(a)-v(\g_\ap)-v(f_\ap) > pv_0 \\ \Ra v(a) >
(p-1)(v_0-v(\g_\ap))+v_0$.\\
Since this is true for all $\ap \in \mathscr{S},$ we have $ v(a)
\geq v_0$. \\ But $v_0 \notin \Gamma \Ra v(a) > v_0 \Ra a \in
\js$.
\end{itemize}
\end{proof}
\begin{lm} Consider the rank $1$ case, i.e., case (II). For an
ideal $I$ of $A$ and $a \in K, aI \subset I$ if and only if $a
\in A$.
\end{lm}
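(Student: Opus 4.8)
The plan is to prove the two implications separately. The implication $a \in A \Rightarrow aI \subset I$ is immediate from the definition of an ideal of $A$: $I$ is closed under multiplication by elements of $A$.

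For the converse, suppose $aI \subset I$; we may assume $I \neq (0)$, since otherwise the statement is degenerate. The key step is to iterate the hypothesis: $aI \subset I$ yields $a^n I \subset I$ for every $n \geq 1$. I would then fix any nonzero $x \in I$, so that $a^n x \in I \subset A$ for all $n \geq 1$, which in terms of the valuation $v$ says $v(a^n x) = n\,v(a) + v(x) \geq 0$ for every $n \geq 1$.

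Now the rank $1$ hypothesis (II) is used: $\Gamma = v(K^{\times})$ is an ordered subgroup of $\R$, hence Archimedean. If $v(a) < 0$, then $n\,v(a) \to -\infty$ in $\R$, so $n\,v(a) + v(x) < 0$ for all sufficiently large $n$, contradicting $v(a^n x) \geq 0$. Hence $v(a) \geq 0$, i.e. $a \in A$, which is what we wanted.

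The only point worth flagging as an obstacle is that this is precisely where the rank $1$ assumption is essential and cannot be dropped: over a valuation ring of higher rank one can have $v(a) < 0$ with $v(a)$ infinitesimal relative to $v(x)$, so that $n\,v(a) + v(x)$ remains positive for all $n$ and the conclusion would fail. The Archimedean property supplied by (II) is exactly what rules this out, so no further input is needed.
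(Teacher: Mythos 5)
Your proof is correct. The paper states this lemma without proof, and your argument---iterating $aI \subset I$ to get $n\,v(a) + v(x) \geq 0$ for all $n$ and then using the Archimedean property of an ordered subgroup of $\R$ to force $v(a) \geq 0$---is precisely the standard argument being omitted, and your observation that the conclusion can fail in higher rank (where $v(a)<0$ can be infinitesimal relative to $v(x)$) correctly pinpoints why hypothesis (II) is essential.
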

\begin{cor} In particular, if $L|K$ satisfies (II) and
$e_{L|K}=1$, then $\{x \in L \mid \Tr(x \js) \subset H \}= B$.
\end{cor}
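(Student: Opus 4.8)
The plan is to deduce the corollary formally from the two lemmas just proved. The first identifies $\js$ as the $H$-trace-dual of $B$, namely $\js = \{x \in L \mid \Tr_{L|K}(xB) \subset H\}$; the second says that in the rank-$1$ situation an ideal is stabilized only by multiplication by ring elements. The point is that $\js$ is simultaneously an ideal of $B$ (so it absorbs $B$ under multiplication) and, by the first lemma, a trace-dual, so membership in the trace-dual of $\js$ can be rephrased as a stability condition $x\js \subseteq \js$.

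First I would treat the inclusion $B \subseteq \{x \in L \mid \Tr(x\js) \subset H\}$. For this it is enough to know $\Tr(\js) \subseteq H$: then for $x \in B$ we have $x\js \subseteq \js$ because $\js$ is an ideal of $B$, hence $\Tr(x\js) \subseteq \Tr(\js) \subseteq H$. The inclusion $\Tr(\js) \subseteq H$ is immediate from the first of the two preceding lemmas: any $j \in \js$ satisfies $\Tr(jB) \subseteq H$, and specializing to $1 \in B$ gives $\Tr(j) \in H$.

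For the reverse inclusion, let $x \in L$ with $\Tr(x\js) \subseteq H$, and fix an element $j \in \js$. Since $\js$ is an ideal of $B$, $jB \subseteq \js$, so $xjB \subseteq x\js$ and therefore $\Tr(xjB) \subseteq \Tr(x\js) \subseteq H$. By the first preceding lemma this means $xj \in \js$. As $j$ ranges over $\js$ this shows $x\js \subseteq \js$. Finally, under hypothesis (II) the value group $\Gamma_L = \Gamma_K$ (the two coincide because $e_{L|K}=1$) is an ordered subgroup of $\R$, so $B$ is a rank-$1$ valuation ring and $\js$ is a nonzero integral ideal of $B$ by \Cref{jh}; hence the value set of $\js$ is a final segment of $\Gamma_L$ bounded below, and iterating the relation $x\js \subseteq \js$ would produce elements of $\js$ of arbitrarily negative value as soon as $v(x) < 0$. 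Thus $v(x) \geq 0$, i.e. $x \in B$.

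This last step — the analogue for the pair $(B,\js)$ of the lemma just proved for ideals of $A$ — is the only place the rank-$1$ hypothesis is used, and it is the only part of the argument that is not pure bookkeeping with the trace; everything else goes through verbatim whenever the first preceding lemma is available.
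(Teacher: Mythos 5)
Your proposal is correct and follows essentially the same route as the paper: reduce, via the lemma identifying $\js$ as the $H$-trace-dual of $B$, to the stability condition $x\js \subseteq \js$, and then conclude $x \in B$ by the rank-$1$ (archimedean) argument of the preceding lemma. You merely spell out the last step in more detail — the paper states that lemma for ideals of $A$ and simply cites it, whereas you explicitly run the analogous argument for the ideal $\js$ of $B$, which is justified since $e_{L|K}=1$ makes the value groups coincide.
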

\begin{proof} By Lemma 8.3, $\{x \in L \mid \Tr(x \js) \subset H
\}= \{x \in L \mid x \js \subset \js \}$ and hence, clearly
contains $B$. The reverse inclusion follows from Lemma 8.4.
\end{proof}

\begin{pr}\label{gooddiff} In the cases (i) and (iii), $\D^{-1}_{B|A}$ is
described by:
\begin{itemize}
\item \textbf{Case (i):} $\D^{-1}_{B|A}= \js^{1-p}$ and 

\item \textbf{Case (iii):} $\D^{-1}_{B|A}=\{x \in L \mid xBH
\subset \js \}$.
 \end{itemize}
\end{pr}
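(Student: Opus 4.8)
The plan is to read off both descriptions from the explicit formula for $\D^{-1}_{B|A}$ obtained in Section 7.1, together with the trace computations of Section 8.2. Recall that in Case (i) we showed $B=A[\ap']$ with $F'(\ap')=\g^{p-1}$, so $\D^{-1}_{B|A}=A[\ap']^*=\frac{1}{\g^{p-1}}A[\ap']=\{x\in L\mid v(x)\geq -(p-1)v(\g)\}=\{x\in L\mid v(x)\geq (p-1)v(\ap)\}$; and in Case (iii) we showed $\D^{-1}_{B|A}=\{x\in L\mid v(x)\geq -(p-1)v_0\}$, where $v_0=\inf_{\ap}v(\g_\ap)\in\R\backslash\Gamma$. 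On the other side, in both cases $e_{L|K}=1$, so by \Cref{dlogalpha}(i)(b) and the discussion in Section 6.3, $\js$ is the fractional ideal $\{x\in L\mid v(x)>v_0\}$ (or $\{x\in L\mid v(x)\geq v_0\}$ with $v_0=v(\ap)$ in Case (i)); note $v(1/\ap)=-v_0$, so $\js=(1/\ap)B$ in Case (i).

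For Case (i): since $\js=(1/\ap)B$ is principal and invertible, $\js^{1-p}=\ap^{p-1}B=\{x\in L\mid v(x)\geq -(p-1)v_0\}=\{x\in L\mid v(x)\geq (p-1)v(\ap)\}$, which is exactly the formula for $\D^{-1}_{B|A}$ derived above. So the first bullet is immediate once the valuation of $\js$ is pinned down.

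For Case (iii): I would use Lemma 8.2 (which identifies $\{x\in L\mid \Tr_{L|K}(xB)\subset H\}=\js$) combined with the fact that $H$ is the inverse of $\js^{p}$ in the appropriate sense, or more directly argue by valuations. The set $\{x\in L\mid xBH\subset\js\}$ equals $\{x\in L\mid v(x)+v(b)+v(h)>v_0 \text{ for all }b\in B,\ h\in H\}$; since $\inf_{b\in B}v(b)=0$ and $\inf_{h\in H}v(h)=pv_0$ (with the infimum not attained, as $v_0\notin\Gamma$), this is $\{x\in L\mid v(x)+pv_0\geq v_0\}=\{x\in L\mid v(x)\geq -(p-1)v_0\}$, matching the Section 7.1 formula for $\D^{-1}_{B|A}$. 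The only subtlety is handling the strict-versus-non-strict inequalities correctly: because $v_0\in\R\backslash\Gamma$, the conditions ``$v(x)>-(p-1)v_0+\varepsilon$ for all small $\varepsilon>0$'' and ``$v(x)\geq -(p-1)v_0$'' coincide for $x\in L^\times$, so no element is lost or gained at the boundary.

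The main obstacle I anticipate is bookkeeping with the infima and the open/closed nature of these fractional ideals in the defect case: one must be careful that $\inf_{h\in H}v(h)=pv_0$ is genuinely the infimum (this is where \Cref{hn}, i.e. $H=\ns=\{a\in A\mid v(a)>pv_0\}$, is used) and that multiplying the three ideals $B$, $H$, and the "test" element does not shift the boundary. Once the valuative descriptions of $B$, $H$, $\js$, and $\D^{-1}_{B|A}$ are all on the table, both identities are a direct comparison; the content is entirely in Section 7.1 and \Cref{hn}, not in any new argument.
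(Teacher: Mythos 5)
Your proposal is correct. Case (i) is essentially the paper's own argument: both read off $\D^{-1}_{B|A}=\{x\in L\mid v(x)\geq -(p-1)v_0\}$ from $B=A[\ap']$ and $F'(\ap')=\g^{p-1}$ via Section 7.1, and identify this set with $\js^{1-p}=\ap^{p-1}B$. For Case (iii), however, you take a genuinely different route. The paper does not compare valuations at this point at all; it argues formally that $x\in\D^{-1}_{B|A}$ iff $\Tr_{L|K}(xB)\subset A$, iff $\Tr_{L|K}(xB)H\subset H$ (using the lemma that for an ideal $I$ of $A$ and $a\in K$ one has $aI\subset I$ iff $a\in A$ --- this is where rank $1$ enters), iff $xBH\subset\js$ (using the lemma $\{x\in L\mid\Tr_{L|K}(xB)\subset H\}=\js$). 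You instead compute both sides valuatively: $\D^{-1}_{B|A}=\{x\mid v(x)\geq -(p-1)v_0\}$ from the intersection formula of Section 7.1, and $\{x\mid xBH\subset\js\}$ from $\js=\{b\mid v(b)>v_0\}$ together with $\inf_{h\in H}v(h)=pv_0$ not attained, which as you note rests on \Cref{hn}. Both arguments are sound; the paper's version is coordinate-free and isolates two reusable trace lemmas, while yours makes explicit the boundary behaviour at the non-attained infimum, which is precisely where the defect case deviates from the classical one. One minor slip: in Case (i) you write $v_0=v(\ap)$, whereas the paper's convention is $v(\ap)=-v_0$ with $v_0\geq 0$; your subsequent formulas follow the paper's convention, so the conclusion is unaffected. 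Also, the non-attainment of $\inf_{h\in H}v(h)$ follows from $H=\ns$ being generated by elements of valuation strictly greater than $pv_0$, rather than from $v_0\notin\Gamma$ per se, but this does not change your argument.
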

\begin{proof} Since $e_{L|K}=1, \is=\js$.
\begin{itemize}
\item \textbf{Case (i):} 
$ v(F'(\mu))= (p-1)v(\g)=(p-1)v_0 \Ra \D_{B|A}^{-1}=\{x \in L \
| \ v(x) \geq -(p-1)v_0 \}$. The rest follows from $\js= \is
=\lb \frac{1}{\ap}\rb B$.

\item \textbf{Case (iii):} By Lemma 8.4, $\Tr(xB) \subset A$ if
and only if $ \Tr(x B)H \subset H$. By Lemma 8.3, $ \Tr(x B)H
\subset H$ if and only $xBH \subset \js$.
\end{itemize}
\end{proof}

\subsection{Results in the case $e_{L|K}=p$}

We study the case (ii) in this section.
\subsubsection{Preparation}
\begin{lm} Let $S$ be a fractional ideal of $L$ and $\ap \in
L^{\times}$ such that $v_L(\ap)$ generates
$v_L(L^{\times})/v_L(K^{\times})$. Then for $y =
\sum_{j=0}^{p-1} y_j\ap^j ;\ y_j \in K, y \in S$ if and only if
$ y_i\ap^i \in S \ $ for all $ \ 0 \leq i \leq p-1$.
\end{lm}
\begin{proof} Since $e_{L|K}=p, v_L(y_i\ap^i); y_i \neq 0$ are
all distinct. If $y \in S$, then for some $s \in S$, we have
$\dis v_L(y)=\min_{ 0 \leq i \leq p-1}v_L(y_i\ap^i) \geq
v_L(s)$. Thus, $ v_L(y_i\ap^i) \geq v_L(s) \ $ for all $ \ 0
\leq i \leq p-1$ and hence, $ y_i\ap^i \in S \ $ for all $ \ 0
\leq i \leq p-1 $. The converse is clearly true.
\end{proof}

Two important applications of the lemma are below.
\begin{itemize}
\item Consider $S=\D^{-1}_{B|A}, y \in L$. $y \in \D^{-1}_{B|A}
\Lr \Tr(y_i \ap^ib) \in A \ $ for all $ b \in B \ $ for all $ \ 0
\leq i \leq p-1$. \\
Hence, $\dis \D^{-1}_{B|A}= \cup_{0 \leq i \leq p-1} \D_i B$
where $\D_i:=\{y\ap^i \mid y \in K, y\ap^i \in \D^{-1}_{B|A}
\}$.

\noindent Fix some $i$, let $y \in K$. Write $\dis b=
\sum_{j=0}^{p-1} x_j \ap^j \ ; \ x_j \in A_j$. $\dis Tr(y\ap^ib)
\in A \Lr \sum_{j=0}^{p-1} yx_j \Tr(\ap^{i+j}) \in A $.

\noindent Thus, if $i=p-1,$ then\\
$ y\ap^{p-1} \in \D^{-1}_{B|A} \Lr v_L(y)+v_L(x_0+x_{p-1}) \geq
0 \ $ for all $ x_0 \in A, \ $ for all $ x_{p-1} \in A_{p-1}. \\
\Lr v_L(y) \geq 0$ and hence,
$\D_{p-1}B=A\ap^{p-1}B=\ap^{p-1}B=\js^{-(p-1)}$.

\noindent If $0 \leq i \leq p-2, \\ y\ap^i \in \D^{-1}_{B|A} \Lr
v_L(y)+v_L(x_{p-1-i}) \geq 0 \ $ for all $ x_{p-1-i} \in
A_{p-1-i} \\ \Lr y\ap^i.x_{p-1-i}\ap^{p-1-i} \in \ap^{p-1}B\\
\Lr y \ap^i A_{p-1-i}\ap^{p-1-i} \subset \ap^{p-1}B$.

\item Consider $S= \is$.

\noindent $\is$ is generated by $\{(\s-1)(x_i \ap^i) \mid x_i
\in A_i, 1 \leq i \leq p-1 \}$. For a fixed $i$, \\
$ (\s-1)(A_i \ap^i)B=A_i \ap^i [(1+\frac{1}{\ap})^i-1]B = A_i
\ap^i \frac{1}{\ap}B=A_i\ap^i \js$.
Thus, $\dis \is = [\cup_{1 \leq i \leq p-1} A_i \ap^i B] \js. $

\end{itemize}
\begin{defn} We consider the $B$-sub-module ${\OL}'$ of $\OL$
generated by the set $\{db \mid b \in \m L \}$ of generators
(and the relations described for $\OL$).

\end{defn}
\begin{lm}\label{ol'ol} ${\OL}' \cong \OL$ as $B$-modules.
\end{lm}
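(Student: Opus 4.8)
The plan is to show that the $B$-submodule ${\OL}'$ of $\OL$ generated by $\{db \mid b \in \m L\}$ is in fact equal to $\OL$, so that the asserted isomorphism is just the identity; and then, in case one prefers to read ${\OL}'$ as the $B$-module abstractly presented by these generators and the inherited relations, to exhibit an explicit inverse to the evident surjection ${\OL}' \to \OL$.

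The crux is a structural observation: since we are in case (ii), where the residue extension $l|k$ is trivial ($e_{L|K}=p$, $f_{L|K}=1$), one has $B = A + \m L$. Indeed, by \Cref{dlogalpha}(i)(a) every element of $B$ has the form $\sum_{i=0}^{p-1} x_i\ap^i$ with $x_0 \in A$ and $x_i \in A_i$ for $1 \le i \le p-1$; for such $i$ we have $v_L(x_i) > -iv_L(\ap)$, hence $v_L(x_i\ap^i) > 0$, i.e.\ $x_i\ap^i \in \m L$. Thus each $b \in B$ decomposes as $b = a + m$ with $a \in A$ and $m \in \m L$, and moreover $A \cap \m L = \m A$. (This is exactly the place where the hypothesis $f_{L|K}=1$ is essential; the statement is not of this form in case (i).)

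Next I would invoke the definition of $\OL$ as the cokernel of $B\otimes_A \Om^1_A \to \Om^1_B$: the image of $\Om^1_A$ in $\Om^1_B$ is killed, so $da = 0$ in $\OL$ for every $a \in A$. Hence, writing $b = a + m$ as above, additivity gives $db = da + dm = dm$ in $\OL$. Since the elements $db$ with $b \in B$ generate $\OL$ as a $B$-module, and each of them lies in ${\OL}'$, we conclude ${\OL}' = \OL$.

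Finally, if ${\OL}'$ is understood as the $B$-module presented by symbols $dm$ ($m \in \m L$) with the additivity and Leibniz relations inherited from $\OL$, the natural map ${\OL}' \to \OL$ is the surjection just exhibited, and its inverse is $db \mapsto dm$, where $b = a + m$. This is well defined because two decompositions of $b$ differ by an element of $A \cap \m L = \m A$, on which $d$ already vanishes in ${\OL}'$; it sends $da \mapsto 0$ for all $a \in A$, so it factors through $\OL$, and it is manifestly two-sided inverse to the surjection. I expect the only point requiring any care is this last well-definedness and compatibility check, which is routine and again rests on $A \cap \m L = \m A$ together with the vanishing of $d$ on $A$.
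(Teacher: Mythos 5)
Your construction is essentially the paper's: the paper also uses $B=A+\m L$ (valid here because we are in the case $e_{L|K}=p$, $f_{L|K}=1$, so $l=k$) to define the inverse $\pi:\OL\to{\OL}'$ by $db\mapsto d(b-x)$ for any $x\in A$ with $b-x\in\m L$, and independence of the decomposition rests on $A\cap\m L=\m A$ together with $d(\m A)=0$, exactly as you say. Your preliminary remark that, under the literal ``submodule of $\OL$'' reading of the definition, the two modules coincide because $db=d(b-x)$ in $\OL$, is also correct and worth making explicit.

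The one step you defer as ``routine'' is, however, the entire body of the paper's proof: verifying that $db\mapsto d(b-x)$ is compatible with the Leibniz relation. This is not a purely formal check. For $b,c\in B$ with $b-x,\ c-y\in\m L$ one must establish $d(bc-xy)=c\,d(b-x)+b\,d(c-y)$, and a naive Leibniz expansion of the left-hand side produces terms such as $(b-x)\,dc$ and $(c-y)\,dx$ involving differentials of elements that need not lie in $\m L$. The paper's argument writes $bc-xy=c(b-x)+x(c-y)$ and rearranges the expansion so that the unwanted $dc$ terms cancel in pairs and the remaining stray terms carry a factor $dx$ or $dy$, which vanish. It is a short computation, but it is the only place where something could genuinely fail, so it should be written out rather than asserted; with that supplied, your proposal matches the paper's proof.
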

\begin{proof} ${\OL}' \to \OL$ is the map $db \mapsto db$.
Consider the map $\pi: \OL \to {\OL}'$ described below.

For $b \in B$,there exists $x \in A$ such that $b-x \in \m L$.
We define $\pi(db)=d(b-x)$. Note that this definition is
independent of the choice of $x$. It is enough to show that
$\pi$ preserves the relations.

Let $b,c \in B, x, y \in A$ such that $b-x, c-y \in \m K$. 

\noindent Additivity is preserved, since $\pi(d(b+c))=d
(b+c-x-y)=d(b-x)+d(c-y)=\pi(db)+\pi(dc)$.

\noindent Since $dx=0, dy=0$ and $bc-xy=c(b-x)+x(c-y) \in \m L$,\begin{align*}
cd(b-x)+bd(c-y)
&=cd(b-x)+(b-x)dc-(b-x)dc+(b-x)d(c-y)+xd(c-y)+(c-y)dx
\\&=d(c(b-x))+d(x(c-y))+(b-x)d(c-y)-(b-x)dc
\\&=d(bc-xc+xc-xy)+(b-x)[d(c-y)-dc]\\&=d(bc-xy)-(b-x)dy=d(bc-xy)
\end{align*}
Hence, $\pi(d(bc))=c\pi(db)+b\pi(dc)$.
\end{proof}
 
We do not have a good description, as in \Cref{gooddiff}, of the different ideal in this case. However, with further assumptions on the value group $\Gamma_K$, we obtain similar results.

\subsubsection{Some Results in a Special Case}

\begin{notn}\label{notn} Let $L|K$ satisfy (II). Assume further that
$e_{L|K}=p$ and the value group $\Gamma_K$ of $K$ (as an ordered
subgroup of $\R$) is not isomorphic to $\Z$. Thus, $L|K$ is a
defectless Artin-Schreier extension and $\Gamma_K$ is a dense
ordered subgroup of $\R$.
\end{notn}

\begin{lm}\label{ideals}Under the assumptions above (\Cref{notn}),

\begin{enumerate}[(a)]
\item For $1 \leq i \leq p-1, A_iB=\js^i \m L$.
\item $\is = \js \m L.$
\item $\m L^n=\m L$ for all integers $n \geq 1$, and consequently, $\is^n = \js^n \m L.$
\end{enumerate}
\end{lm}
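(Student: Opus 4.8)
The three assertions are really one computation carried out three times; the governing principle is that in this case the value group $\Gamma_K \subset \R$ is dense, so the maximal ideal $\m L$ is \emph{idempotent}: $\m L \cdot \m L = \m L$. I would prove (c)'s first claim at the outset, since (a) and (b) both hinge on it. Indeed, $\m L = \{ x \in L \mid v(x) > 0 \}$, and for $n \geq 1$ we always have $\m L^n \subseteq \m L$; conversely, given $x$ with $v(x) > 0$, density of $\Gamma = \Gamma_K$ (note $e_{L|K} = p$ means $v_L(L^\times) = \tfrac1p\Gamma$ is also dense, but in fact one can work with elements of $\Gamma$) lets me pick $a \in K$ with $0 < v(a) < v(x)/n$, so that $x / a^n$ still has positive valuation, i.e. $x = (x/a^n) \cdot a^n \in \m L^n$. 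Hence $\m L^n = \m L$.

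\textbf{Part (a).} Recall from the case $e_{L|K} = p$ (Case (ii) / the discussion preceding \Cref{ol'ol}) that, with $f$ best and $v_L(\ap) = -v_0$, we have $A_i = \{ x \in K \mid v(x) \geq i v_0 \} = \{ x \in A \mid v(x) > i v_0 \}$ for $1 \leq i \leq p-1$, and $\js = \lb \tfrac1\ap \rb B = \{ x \in L \mid v(x) \geq v_0 \}$, so $\js^i = \{ x \in L \mid v(x) \geq i v_0 \}$. The key point, valid precisely because $\Gamma_K$ is dense (and $i v_0 \in \Gamma_K$ since $v_0 \in \Gamma_K$ here — $\ap^{-1}$ has valuation $v_0$ but $e_{L|K}=p$ forces... actually $v_0 = v_L(\ap^{-1})$ and $p v_0 = v(f) \in \Gamma_K$, and the strict inequality in the description of $A_i$ is exactly the statement $v_0 \notin$ the relevant coset, handled as in \Cref{dlogalpha}), is that $A_i B = \{ x \in L \mid v(x) > i v_0 \}$: the inclusion $\subseteq$ is clear, and for $\supseteq$, given $v(x) > i v_0$ write $x$ in terms of the $\ap$-basis and use \Cref{mu}/the distinctness of valuations to reduce to monomials, then absorb units of $B$. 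Then $\{ x \mid v(x) > i v_0 \} = \js^i \cdot \m L$ because $\js^i = \{ v \geq i v_0 \}$ and multiplying by $\m L = \{ v > 0 \}$ strictly increases valuations while density ensures every value $> i v_0$ is attained: concretely, for $v(x) > i v_0$, pick $\pi \in K$ with $0 < v(\pi) < v(x) - i v_0$, so $x = (x \pi^{-1})(\pi)$ with $x\pi^{-1} \in \js^i$ and $\pi \in \m L$.

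\textbf{Part (b).} From the displayed computation just before \Cref{ol'ol}, $\is = \big[ \bigcup_{1 \leq i \leq p-1} A_i \ap^i B \big] \js$. Now $A_i \ap^i B = A_i B \cdot \ap^i$, and $v(\ap^i) = -i v_0$, so $v(A_i B \cdot \ap^i) = \{ v > i v_0 - i v_0 \} = \{ v > 0 \} = \m L$ by part (a). Hence the union over $i$ is just $\m L$, and $\is = \m L \cdot \js = \js \m L$, using commutativity and part (a)'s idempotence if needed to normalize. (One should double-check the $i=0$ piece is subsumed: the generators of $\is$ listed are $(\s-1)(x_i\ap^i)$ for $1 \le i \le p-1$, so $i=0$ does not occur.)

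\textbf{Part (c), second claim.} Given $\m L^n = \m L$ and (b), compute $\is^n = (\js \m L)^n = \js^n \m L^n = \js^n \m L$, using that ideals of a valuation ring multiply as $\{v \geq \alpha\}\{v \geq \beta\} = \{v \geq \alpha+\beta\}$ when the values are attained — here $\js^n$ is principal-ish ($\js = (\ap^{-1})B$) so $\js^n = (\ap^{-n})B$ and the arithmetic is clean — and $\m L^n = \m L$ from the first claim.

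\textbf{Main obstacle.} The routine parts are the valuation bookkeeping; the one genuinely load-bearing input is the idempotence $\m L^2 = \m L$, i.e. that $\m L$ is \emph{not} finitely generated, which is exactly where the hypothesis $\Gamma_K \not\cong \Z$ (density of $\Gamma_K$ in $\R$) enters and without which the lemma is false. I would make sure the density argument is stated cleanly once and then reused, and would be careful that the "strict vs.\ non-strict inequality" juggling in the descriptions of $A_i$, $\js^i$, and $\m L$ is done consistently — that is the place where a sign or inequality slip would break the identifications $A_i B = \{v > iv_0\}$ and $\{v > iv_0\} = \js^i \m L$.
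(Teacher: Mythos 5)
Your proof is correct and follows essentially the same route as the paper: identify $A_iB$ with $\{x\in L\mid v(x)>iv_0\}=\ap^{-i}\m L=\js^i\m L$ using density of $\Gamma_K$, feed this into the displayed formula $\is=[\cup_i A_i\ap^iB]\js$ for (b), and use density once more to get $\m L^n=\m L$ for (c). The only cosmetic differences are your reordering (proving idempotence of $\m L$ first, though (a) and (b) in fact rest on density directly rather than on $\m L^2=\m L$) and the brief wobble over whether $v_0\in\Gamma_K$, which you correctly resolve ($iv_0\notin\Gamma_K$ for $1\le i\le p-1$).
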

\begin{proof} \begin{enumerate}[(a)]
\item For $1 \leq i \leq p-1, A_iB=\{x \in K \mid v_L(x)
>iv_0\}B=\{x \in L \mid v_L(x) >iv_0\}$. Hence,
$A_iB=\frac{1}{\ap^i}\m L= \js^i \m L$.
\item By (a), for $1 \leq i \leq p-1,
A_i\ap^iB=\frac{1}{\ap^i}\ap^i \m L=\m L$.
Hence, 

$\dis \is = [\cup_{1 \leq i \leq p-1} A_i \ap^i B] \js= \js \m
L$.
\item Let $x \in \m L, v_L(x) >0$. Since the value group is dense in $\R$, there exists an element $y$ of $\m L$ satisfying $0<v_L(y)<v_L(x)/n$. Therefore, $(x) \subset (y^n) \subset \m L^n$ and we can conclude that $\m L = \m L ^n$. The rest follows from (b).
\end{enumerate}
\end{proof}
\begin{rem} In the general case when $e_{L|K}=p, 1 \leq i \leq
p-1$, we have $A_iB \subset \js^i\m L$ and $\is \subset \js \m
L$.
\end{rem}
\begin{pr}\label{ndvr} Under the assumptions above (\Cref{notn}), 
\begin{enumerate}[(a)]
\item $\dis \D^{-1}_{B|A} = \js^{-(p-1)}$
\item $\dis \OL \cong \ol \otimes_B \m L \cong
\frac{\is  }{\is^p}$
\end{enumerate}
\end{pr}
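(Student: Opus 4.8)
The plan is to prove both parts using the description of $\D^{-1}_{A[\ap']|A}$ from Case (ii) of section 7.1 together with the ideal identities collected in \Cref{ideals}. First I would establish (a). By the computation preceding \Cref{ol'ol}, we have $\D^{-1}_{B|A} = \cup_{0 \leq i \leq p-1} \D_i B$, where $\D_{p-1}B = \ap^{p-1}B = \js^{-(p-1)}$ and, for $0 \leq i \leq p-2$, $\D_i B$ consists of those $y\ap^i$ ($y \in K$) with $y\ap^i A_{p-1-i}\ap^{p-1-i} \subset \ap^{p-1}B$. The key point is that under the assumptions of \Cref{notn}, \Cref{ideals}(a) gives $A_{p-1-i}\ap^{p-1-i}B = \m L$, so the condition becomes $y\ap^i \m L \subset \ap^{p-1}B = \js^{-(p-1)}$; equivalently $y\ap^i \in (\js^{-(p-1)} : \m L)$. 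Since $\m L^2 = \m L$ by \Cref{ideals}(c), multiplication by $\m L$ is idempotent and $(\js^{-(p-1)}:\m L) = \js^{-(p-1)}$ (because $\js^{-(p-1)}\m L = \js^{-(p-1)}$, as $\js^{-(p-1)}$ is the fractional ideal $\{v_L(x) \geq -(p-1)v_0\}$ with $(p-1)v_0 \notin \Gamma$, hence already "$\m L$-saturated"). Therefore each $\D_i B \subset \js^{-(p-1)}$, and the union is exactly $\js^{-(p-1)}$, giving (a).

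For part (b), the plan is to compute all three modules explicitly as fractional-ideal quotients and match them. Using \Cref{diff}(a) (which applies in Case II, but here I would instead use the defectless Case-(ii) analogue, or re-derive directly) together with $B = \sum_{i=0}^{p-1} A_i \ap^i$: from $\ol$ generated by $\dl \ap$ (\Cref{dlogalpha}(ii)) and the isomorphism $\varphi_\s : \ol/\js\ol \cong \js/\js^2$ (\Cref{phi}(2), upgraded to isomorphism as in the Case I proof), one has $\ol \cong \js/\js^p$-type description; more precisely I would show $\ol \otimes_B \m L$ and $\is/\is^p$ are both isomorphic to $\m L$-twisted versions of $\js/\js^2$-towers. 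Concretely: $\is = \js \m L$ by \Cref{ideals}(b) and $\is^p = \js^p \m L$ by \Cref{ideals}(c), so $\is/\is^p \cong \js\m L/\js^p \m L$. On the other side, $\OL \cong {\OL}'$ by \Cref{ol'ol}, and ${\OL}'$ is generated by $\{db : b \in \m L\}$; writing $b = \sum x_i \ap^i$ with $x_i \in A_i$ and using $db = \sum (\text{terms involving } \dl\ap)$ via the Leibniz and log rules, one identifies $\OL \cong (\cup_i A_i\ap^i B)\,\dl\ap = \is \cdot(\text{copy of }\ol)$, i.e. $\OL \cong \ol \otimes_B \is/(\ldots)$; combined with $\is = \js\m L \cong \m L$ as $B$-modules (via $\times$ a generator, noting $\js$ is "invisible" after tensoring appropriately — here I'd use that $\js\cdot\ol \cong \js^2$ etc.), this yields $\OL \cong \ol \otimes_B \m L$. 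Finally $\ol \otimes_B \m L \cong \js/\js^2 \otimes_B \m L \cong \js\m L/\js^2\m L = \is/\is^p$ (the last equality again from \Cref{ideals}(c), since $\js^2 \m L = \js^2\m L = $ ... one checks $\js^2\m L = \is^2/(\is\cdot\text{stuff})$; more cleanly $\js/\js^2 \otimes \m L \cong \m L/\js\m L \cong \is/\is^p$ is false by degree count, so the correct target is $\is/\is^p$ matched via $\js\m L/\js^p\m L$ only after verifying $\js^2\m L = \js^p\m L$, which holds iff $\m L$ absorbs the relevant powers — true here because $2v_0, pv_0 \notin \Gamma$ and $\m L^n = \m L$).

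The main obstacle I anticipate is the bookkeeping in part (b): getting the $\m L$-factors and the powers of $\js$ to line up correctly so that $\OL$, $\ol \otimes_B \m L$, and $\is/\is^p$ all become literally the same fractional-ideal quotient $\{v_L \geq v_0 - \mu\}\m L / \{v_L \geq pv_0 - \mu\}\m L$ (or its untwisted form). The subtle points are (i) that $\ol$ itself, as a cyclic $B$-module generated by $\dl\ap$, has annihilator exactly $\js$ — this needs \Cref{phi}(2) promoted to an isomorphism, which follows as in the Case I argument from $b\,\dl\ap \in \ker\varphi_\s \Lr b \in \js$; (ii) that tensoring by $\m L$ is exact enough here, which uses that $\m L$ is a flat (indeed the localization-like) $B$-module in the dense-value-group setting because $\m L = \m L^n$ makes it idempotent and torsion-free over a valuation ring; and (iii) reconciling $\OL \cong \is/\is^p$ with $\is \subsetneq B$, which is why the intermediate step $\OL \cong {\OL}'$ via \Cref{ol'ol} is essential — it replaces generators $db$, $b\in B$, with generators $db$, $b \in \m L$, and the latter naturally produces the factor $\is = \cup A_i\ap^i B \cdot \js$ once one expands $d(x\ap^i) = \ap^i dx + x\, d(\ap^i)$ and uses $dx = 0$ in $\OL$ for $x \in K$, leaving $x \ap^i \cdot (\text{unit})\,\dl\ap$ with $x\ap^i$ ranging over generators of $\is/\js$.
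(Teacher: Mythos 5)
Your part (a) is essentially the paper's argument: both reduce, for $0\leq i\leq p-2$, to the condition $v_L(y)+(p-1-i)v_0\geq 0$ and use density of $\Gamma_K$ to pass from the strict inequalities coming from $A_{p-1-i}$ to the closed condition defining $\js^{-(p-1)}$. One small slip: $\js^{-(p-1)}\m L\neq \js^{-(p-1)}$, since the value $-(p-1)v_0$ is attained by $\ap^{p-1}$ and so lies in $v_L(L^{\times})$ (it is $\Gamma_K$, not $v_L(L^{\times})$, that misses $(p-1)v_0$). The colon-ideal identity $(\js^{-(p-1)}:\m L)=\js^{-(p-1)}$ that you actually need is nevertheless correct by density, so (a) stands.

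Part (b) has two genuine gaps. First, the chain of isomorphisms you propose for $\ol\otimes_B\m L\cong\is/\is^p$ cannot be run from $\varphi_\s:\ol/\js\ol\cong\js/\js^2$ alone: that isomorphism only controls $\ol$ modulo $\js\ol$, and your attempted repair forces you to assert $\js^2\m L=\js^p\m L$, which is false for $p>2$ (these are $\{v_L>2v_0\}$ and $\{v_L>pv_0\}$). The missing ingredient is the full annihilator of $\ol$: the paper proves $\ol\cong B/\js^{p-1}$, on one hand by expanding $0=\dl f$ in $\ol$ to get $\ap^{-(p-1)}\dl\ap=0$, and on the other by using the minimal polynomials of the elements $x_i\ap^i$ to show no larger ideal kills $\dl\ap$. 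Only with $\ol\cong B/\js^{p-1}$ in hand does the correct chain $\ol\otimes_B\m L\cong\m L/\js^{p-1}\m L\cong\js\m L/\js^{p}\m L=\is/\is^p$ (using \Cref{ideals}) go through. Second, the isomorphism $\OL\cong\ol\otimes_B\m L$ is the technically heavy part of the paper's proof: one must produce well-defined maps $\xi:{\OL}'\to\ol\otimes_B\m L$, $db\mapsto\dl b\otimes b$, and $\psi:\ol\otimes_B\m L\to\OL$, $\dl b\otimes c\mapsto\frac{c}{ab}\,d(ab)$ with $a\in K^{\times}$ chosen by density, and then verify additivity, the Leibniz and log relations, independence of $a$, and that the composites with $\pi$ from \Cref{ol'ol} are identities. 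Your sketch points in the right direction (passing to ${\OL}'$ so that generators come from $\m L$ is indeed the key move), but none of these verifications is routine --- for instance $\dl(b+c)\otimes(b+c)=\dl b\otimes b+\dl c\otimes c$ requires an explicit computation --- so as written this half is a plan rather than a proof.
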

\begin{proof} \begin{enumerate}[(a)]
\item We recall that $\D_{p-1}= \js^{-(p-1)}$ and hence,
$\js^{-(p-1)} \subset \D^{-1}_{B|A}$. If $0 \leq i \leq p-2, \\
y\ap^i \in \D^{-1}_{B|A}\\ \Lr v_L(y)+v_L(x_{p-1-i}) \geq 0 \ $
for all $ x_{p-1-i} \in A_{p-1-i}\\ \Lr v_L(y)+ (p-1-i)v_0 \geq
0$ (since $\Gamma_K$ is dense)

\noindent $\Lr v_L(y\ap^i) \geq -(p-1)v_0\\ \Lr y\ap^i \in
\js^{-(p-1)}$.

\noindent Hence, $\dis \D^{-1}_{B|A} \subset \js^{-(p-1)}$ and
we have the equality $ \D^{-1}_{B|A} = \js^{-(p-1)}$.
\item We defined a map $\pi: \OL \to {\OL}'$ in \Cref{ol'ol}. Let $\Om:=\OL, \Om' := {\OL}'$, for convenience.
Consider the following maps:

\begin{equation}
\xi : \Om' \to \ol \otimes_B \m L \ ; \ \xi(db)=\dl b \otimes
b \end{equation} where $0 \neq b \in \m L$ and \begin{equation}
\psi : \ol \otimes_B \m L \to \Om ; \ \psi(\dl b \otimes
c)=\frac{c}{ab}d(ab)\end{equation} where $b \in L^{\times}, c
\in \m L , a \in K^{\times}; 0 \leq v_L(ab) \leq v_L(c)$. Such
an $a$ exists since $\Gamma_K$ is dense in $\R$.

We verify that these maps are well-defined. Furthermore, $ \xi
\circ \pi \circ \psi: \ol \otimes_B \m L \to \ol \otimes_B \m L$
and $\psi \circ \xi \circ \pi : \Om \to \Om$ are isomorphisms.

\begin{itemize}
\item Let $0 \neq b,c \in \m L , 0 < v_L(c) \leq v_L(b)$. We can
write $b=ch \ ; h \in B$.
\begin{align*}
\dl (b+c) \otimes (b+c)&= \dl c(1+h) \otimes
c(1+h)\\&=(1+h) \dl c \otimes c +(1+h)\dl(1+h) \otimes c
\\&= \dl c \otimes c +h \dl c \otimes c +h \dl h
\otimes c \\&= \dl c \otimes c +h \dl ch \otimes c \\&= \dl c \otimes c + \dl ch \otimes ch \\&= \dl c \otimes c
+\dl b \otimes b
\end{align*}
\item Let $0 \neq b,c \in \m L$

\begin{align*}
\dl (bc) \otimes (bc)&= \dl b \otimes bc + \dl c
\otimes bc \\&= c \dl b \otimes b + b \dl c \otimes c
\end{align*}
\end{itemize}
Thus, $\xi$ is well-defined. Next, we check that $\psi$ is
well-defined.

\begin{itemize}
\item Let $b \in L^{\times} , c \in \m L , a, a' \in K^{\times}$
such that $0 \leq v_L(ab), v_L(a'b) \leq v_L(c)$. Since
$da=0=da'$,

$\frac{c}{ab}d(ab)=\frac{c}{ab}(a db + b da )=\frac{c}{b} db=
\frac{c}{a'b}d(a'b)$. Thus, $\psi$ is independent of choice of
$a$.
\item Let $0 \neq b \in B, c \in \m L , a \in K^{\times}$ as
described in the definition of $\psi$. Since $da=0,$ we have

$\psi (db \otimes c)= b \frac{c}{ab} d (ab)= \frac{c}{a} (a db
+b da)= c db$.

Hence, $\psi$ preserves additivity and Leibniz rule.
\item Let $b, b' \in L, c, c' \in \m L, a, a' \in K^{\times}$
such that $0 \leq v_L(ab) \leq v_L(c)$ and $0 \leq v_L(a'b') \leq
v_L(c')$.

Furthermore, since $\Gamma_K$ is dense in $\R$, we can choose
$a, a'$ such that $0 \leq v_L(aa'bb') \leq v_L(c)$.

$\frac{c}{aa'bb'} d(aa'bb')=\frac{c}{aa'bb'}
[a'b'd(ab)+abd(a'b')]=\frac{c}{ab} d(ab)+\frac{c}{a'b'} d(a'b')$
\end{itemize}
Thus, $\psi$ is well-defined.

Next, we consider the maps $ \xi \circ \pi \circ \psi: \ol
\otimes_B \m L \to \ol \otimes_B \m L$ and $\psi \circ \xi \circ
\pi : \Om \to \Om$.

\begin{itemize}
\item Let $b \in L^{\times}, c \in \m L, a \in K^{\times}, x \in
A$ such that $ 0 \leq v_L(ab) \leq v_L(c)$ and $ab-x \in \m L$.

\begin{align*}
\xi \circ \pi \circ \psi (\dl b \otimes c) &=\frac{c}{ab}\dl (ab-x) \otimes (ab-x)
\\&=\frac{ab-x}{ab}\dl (ab-x) \otimes c
\\&=\frac{ab}{ab}\dl (ab) \otimes c 
\\&=\dl a \otimes c + \dl b \otimes c = \dl b \otimes c
\end{align*}
\item Let $0 \neq b \in B, x \in A$ such that $b-x \in \m L$. 
\begin{align*}
\psi \circ \xi \circ \pi(db) &= \psi( \dl (b-x) \otimes
(b-x)) \\&= \lb \frac{b-x}{b-x} \rb d(b-x) \\&=d(b-x)=db
\end{align*}
\end{itemize}

\noindent This proves the first isomorphism.

\noindent Next, we prove that 

\begin{equation}\label{annh} \ol \cong B/\js^{p-1}
\end{equation}

By \Cref{dlogalpha} (ii), $\ol$ is generated by $\dl \ap = - \dl \lb \frac{1}{\ap}
\rb$. In $\ol$, we have
\begin{align*} 0=-\lb 1-\frac{1}{\ap^{p-1}}\rb \dl
(\frac{1}{f})&=\lb 1-\frac{1}{\ap^{p-1}}\rb \dl f \\&= \lb
1-\frac{1}{\ap^{p-1}}\rb \dl (\ap^p)+ \lb
1-\frac{1}{\ap^{p-1}}\rb \dl \lb 1-\frac{1}{\ap^{p-1}}\rb
\\&= d \lb 1-\frac{1}{\ap^{p-1}}\rb =d \lb
-\frac{1}{\ap^{p-1}}\rb \\&=-d \lb \frac{1}{\ap^{p-1}}\rb= \lb
1-\frac{1}{\ap^{p-1}}\rb \\&=-(p-1)\lb \frac{1}{\ap^{p-1}} \rb \dl \lb \frac{1}{\ap} \rb
\end{align*}
\noindent 
Therefore, $\js^{p-1}=\lb \frac{1}{\ap^{p-1}} \rb$ annihilates
$\ol$.

\noindent Conversely, let $ 0 \neq b \in B$ such that $b \ \ol
=0$. Hence, $ \dis $ for all $ \ 1 \leq i \leq p-1, x_i \in A_i,
b d (x_i \ap^i) =0 \\ \Ra b \in \cap_{i, x_i} G'_{i,x_i}(x_i
\ap^i)B$, where $G_{i,x_i}$ is the minimal polynomial of $x_i
\ap^i$ over $K$.
Let $G:=G_{i,x_i}$ for fixed $(i,x_i)$. Then

\begin{align*}
G'(x_i\ap^i) &= \prod_{1 \leq j \leq p-1} x_i\ap^i\lb 1- \lb
\frac{\ap +j}{\ap} \rb^i \rb \\&= (x_i\ap^i)^{p-1} \prod_{1 \leq
j \leq p-1} \lb 1- \lb \frac{\ap +j}{\ap} \rb^i \rb \\&=
(x_i\ap^i)^{p-1} \prod_{1 \leq j \leq p-1} \lb 1- \lb \frac{\ap
+j}{\ap} \rb \rb u ; \ u \in B^{\times} \\&= (x_i\ap^i)^{p-1}
\lb \frac{-1}{\ap} \rb^{p-1} \ (p-1)! \ u
\end{align*}

\noindent Thus, $\dis b \in \cap_{i, x_i} G'_{i,x_i}(x_i \ap^i)B
= \cap_{i, x_i} (x_i \ap^i)^{p-1}\js^{p-1} \Ra b \in \js^{p-1}$

\noindent By \Cref{annh} and \Cref{ideals},

\noindent $\dis \ol \otimes \m L \cong B/\js^{p-1} \otimes \m L
\cong \m L / \js^{p-1} \m L \cong \js \m L / \js^{p} \m L = \is / \is^p $.

\end{enumerate}
\end{proof}

\section{Appendix: A Non-trivial Example of Defect Extension}
For some of the well-known examples of defect extensions, our main results are trivially true, since the differential modules are all $0$. We construct an  example below that exhibits complications created by the defect.
\begin{ex} Let $k$ be a perfect field of characteristic $p>0$ and let $A_0$ be the local ring of a smooth algebraic surface over $k$ at some closed point, and assume that we are given an Artin-Schreier extension $L$ of the field of fractions $K$ of $A_0$ given by 
$$\alpha^p- \alpha= \frac{a+y}{x^n}$$
where $x$ and $y$ are regular parameters of $A_0$, $a\in k\setminus \F_p$, and  $n\geq 1$ is coprime to $p$. 
We assume $n\geq 3$ if $p=2$. We will construct two dimensional regular local rings $A_i\subset K$ ($i\geq 0$) such that
$$A_0\subset A_1\subset A_2  \subset \dots$$
as follows, by using successive blow ups. We will have a valuation ring $A:=\bigcup_i A_i$ for which this Artin-Schreier extension has defect.
\end{ex}

\subsection{Construction}
Let $u:=y+a$. 
Define $x'\in K$ by $x=x'y^p$ and let $A'_0$ be the 
 the local ring of $A_0[x']\subset  K$ at the maximal ideal generated by $x'-1$ and $y$. 
  Then $A'_0$  is a two dimensional regular local ring with regular parameters $x'-1$ and $y$.  Since $n$ is coprime to $p$, $z:= (x')^{-n}-1$ and $y$ are also regular parameters of $A'_0$. Define $z'\in K$ by $z=z'y$ and let $A_1$ be the local ring of $A'[z']\subset K$ at the maximal ideal generated by $z'-1$ and $y$. 

Then the above Artin-Schreier equation is rewritten as follows. 
We have 
\begin{equation} \begin{split} f_0 & :=\frac{a+y}{x^n} =\frac{a+y}{(x')^ny^{np}}= \frac{(a+y)(1+z'y)}{y^{np}}\\ & =\frac{a}{y^{np}} + \frac{a+1+ a(z'-1)+z'y}{y^{np-1}}\\ &
=  \frac{a+1+y_1}{x_1^{np-1}} + c^p-c \end{split} \end{equation}

with
$$x_1=y,\quad  y_1= a(z'-1)+ z'y+a^{1/p}y^{n(p-1)-1}, 
 \quad c= a^{1/p}y^{-n}.$$
In $A_1$, $x_1$ and $y_1$ are regular parameters, and 
 the same Artin-Schreier extension is obtained by 
$$\alpha_1^p-\alpha_1= \frac{a+1+y_1}{x_1^{np-1}}=:f_1.$$

We can repeat this process and get $A_0\subset A_1\subset A_2\subset \dots$ inductively. To sum up, we have the following for all $i \geq 0$:

 In $A_i$, the regular parameters are $x_i$ and $y_i$, as described in the construction (and we put $x=x_0, y=y_0, \ap = \ap_0, n=n_0$). The same Artin-Schreier extension is give by $$\alpha_i^p-\alpha_i= \frac{a+i+y_i}{x_i^{n_i}}=:f_i$$ where the integers $n_i$ satisfy the recursive relation $n_{i+1}=pn_i-1$.

\subsection{Valuation on $A$ and $B$: } Let $B$ be the integral closure of $A$ in $L$.
Due to their construction using successive blow ups we note that
$A$ and $B$ are valuation rings \cite{A}. Let $v_K=v$ be the valuation on
$K$. We see from the calculations below that the value group of
$K$ is $\dis \Gamma \cong \Z[\frac{1}{p}]; v(x_0) \mapsto
1$.
For all $i \geq 0$, we have the following:
\begin{enumerate}
\item $\dis n_i=p^in-(p^{i-1}+ \dots +p^2+p+1)=p^in-\frac{p^i-1}{p-1}$.
\item $v(x_i)=pv(y_i)=pv(x_{i+1})$\\ And hence, we get $$v(x_i)= \frac{1}{p^i} , v(y_i)= \frac{1}{p^{i+1}}$$
\item $\dis -v(f_i)=n_i v(x_i)=n-\frac{1}{p-1}+\frac{1}{p^i(p-1)}$.
\end{enumerate}

Since $\Gamma$ is $p$-divisible, $L|K$ has defect. We will use
$v$ to denote $v_L$ as well. By the computations above, it follows that $$-v(\ap_i)= \frac{1}{p} \lb -v(f_i) \rb = \frac{n}{p}-\frac{1}{p(p-1)}+\frac{1}{p^{i+1}(p-1)}$$

\subsection{Special Ideals and Differential Modules} 

Due to the defect, we have $\is=\js$ and it is enough to look
at $\Om^1$'s instead of $\om^1$'s (see \Cref{ij}).

The elements $\dis \frac{1}{\ap_i}$ for $i \geq 0$ generate the
ideal $\js$ of $B$ and the elements $\dis \frac{1}{f_i}$ for
$i \geq 0$ generate the ideal $H$ of $A$. 
\begin{itemize}
\item Since $\dis \inf_{i\geq 0} \lb\frac{n}{p}-\frac{1}{p(p-1)}+\frac{1}{p^{i+1}(p-1)}\rb= \frac{n}{p}-\frac{1}{p(p-1)}$, we have\\ $\dis \is=\js=\{b \in B \mid
v(b)>\frac{1}{p}(n-\frac{1}{p-1} )=:v_0\}$, and
consequently,
\item $\dis \ns= \{a \in A \mid v(a)>
(n-\frac{1}{p-1})=pv_0\}$.

\item Since $\dis \inf_{i\geq 0} -v(f_i)= \inf_{i\geq 0} \lb n-\frac{1}{p-1}+\frac{1}{p^i(p-1)}\rb = n-\frac{1}{p-1}$, there is no best $f$ and furthermore,\\ $ \dis H= \{a \in A \mid  v(a)>
(n-\frac{1}{p-1})\}$
\end{itemize}

\noindent Thus, \Cref{hn} is clearly true in this case.\\\\ Next, use the notation from the proof of \Cref{comm dia} and
consider the differential modules $\Om^1_{B|A}, \Om^1_{B_i|A_i}$'s.

\noindent  Let $\bt_i:=\ap_iy_i^{n_i}$. Then the integral closure of $A_i$ in $L$ is given by $B_i=A_i [\bt_i]$. Let  $F_i(T)$ be  the minimal polynomial of $\bt_i$
 over $A_i$. Then $F_i'(T)=-y_i^{n_i(p-1)}$.\\
We have an isomorphism :
$A_i[\bt_i]/F_i'(\bt_i) \to \Om^1_{B_i|A_i}$ of $B_i$-modules  via the
$A_i$-linear map $a \mapsto a d\bt_i$; for all $ a \in A_i$.\\\\
We use $\ap_0$ as our starting point. Valuation of $\ap_0$ is $-\mu= -n/p$ and $\dis v_0=\frac{1}{p}\lb n-\frac{1}{p-1}\rb$.
The fractional ideals $\T$ and $\T'$ of $L$ are described by
$\dis \T= \{x \in L \mid v(x) > -\frac{1}{p(p-1)}=:v_1 \}$
and $\dis \T'=\{ x \in L \mid v(x) > \lb\frac{n(p-1)-1}{p}\rb
+v_1=:v_2\}$. Then we have:
\begin{itemize}
\item $\Om^1_{B|A} \cong \T/\T'$ 
\item $\T/ \js \T \cong \js/\js^2$
\end{itemize}
From this, \Cref{comm dia} will follow.
\\\\We can also verify that 
\begin{itemize}
\item $\D^{-1}_{B|A}= \cap_{i \geq 0} \D^{-1}_{B_i|A_i}$.
\item $\D^{-1}_{B|A}= \{x \in L \mid v(x) > -(p-1)v_0 \}$ 
\item $\D_{B|A}= \js^{p-1}$ is the annihilator of $\Om^1_{B|A}$.\end{itemize}

\newpage
\textbf{Acknowledgments:} I am extremely grateful to my advisor
Professor Kazuya Kato for his invaluable advice, helpful
feedback during the writing process, and his constant support
during the project. I would also like to thank the anonymous referee  for their detailed and constructive comments on an earlier draft.

\medskip
\medskip
\medskip
\noindent Vaidehee Thatte\\Department of Mathematics,\\
University of Chicago,\\ 5734 University Ave.\\ Chicago,
Illinois 60637\\
Email: \texttt{vaidehee@math.uchicago.edu}

\vfill

\end{document}